\theoremstyle{plain}
\newtheorem{thm}{Theorem}[section]
\newtheorem*{thm*}{Theorem}
\newtheorem{lem}[thm]{Lemma}
\newtheorem*{lem*}{Lemma}
\newtheorem*{cor*}{Corollary}
\newtheorem{prop}[thm]{Proposition}
\newtheorem*{prop*}{Proposition}
\newtheorem*{conj*}{Conjecture}
\newtheorem*{ques*}{Question}
\theoremstyle{definition}
\newtheorem{df}[thm]{Definition}
\newtheorem*{df*}{Definition}
\newtheorem*{dfs*}{Definitions}
\newtheorem*{exercise*}{Exercise}
\theoremstyle{remark}
\newtheorem{rem}[thm]{Remark}
\newtheorem*{rem*}{Remark}
\newtheorem{example}[thm]{Example}
\newtheorem*{example*}{Example}
\patchcmd{\thmhead}{(#3)}{#3}{}{}
\g@addto@macro\bfseries{\boldmath}
\newcommand{\fk}[1]{\mathfrak{#1}}
\newcommand{\sr}[1]{\mathscr{#1}}
\newcommand{\Z}{\mathbf{Z}} 
\newcommand{\Q}{\mathbf{Q}} 
\newcommand{\R}{\mathbf{R}} 
\newcommand{\C}{\mathbf{C}} 
\newcommand{\G}{\mathbf{G}} 
\newcommand{\F}{\mathbf{F}}
\newcommand{\CP}{\mathbf{CP}} 
\newcommand{\x}{\times}
\newcommand{\emp}{\emptyset}
\renewcommand{\sl}{\fk{sl}}
\newcommand{\qbinom}[2]{\genfrac{[}{]}{0pt}{}{#1}{#2}}
\newcommand{\ol}[1]{\overline{#1}}
\DeclareMathOperator{\rk}{rk}
\DeclareMathOperator{\Hom}{Hom}
\DeclareMathOperator{\Id}{Id}
\DeclareMathOperator{\SU}{SU}
\DeclareMathOperator{\U}{U}
\DeclareMathOperator{\SO}{SO}
\DeclareMathOperator{\KR}{KR}
\tikzset{
	neg/.style={postaction=decorate,
	decoration={markings,
	mark=at position 0.05cm with \node{$\circ$};
	}}
}
\newcommand{\thickRes}{
\begin{gathered}
	\includegraphics[width=.035\textwidth]{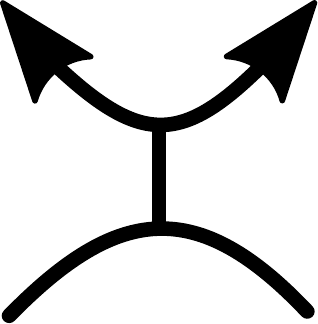}
	\vspace{-3pt}
\end{gathered}
}
\newcommand{\oriRes}{
\begin{gathered}
	\includegraphics[width=.035\textwidth]{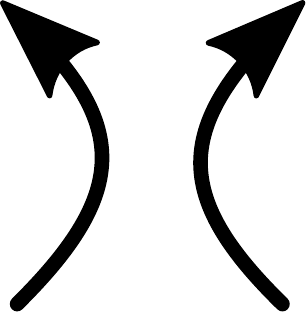}
	\vspace{-3pt}
\end{gathered}
}
\newcommand{\zip}{
\begin{gathered}
	\includegraphics[width=.06\textwidth]{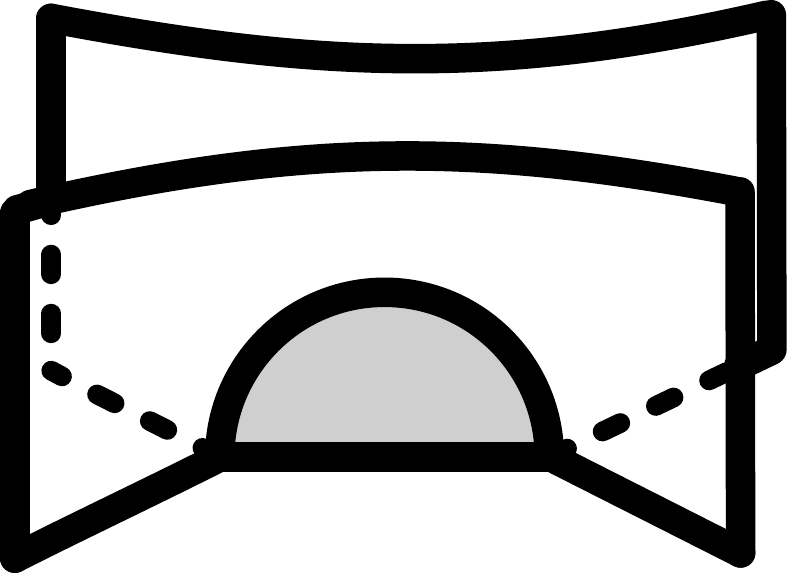}
	\vspace{-5pt}
\end{gathered}	
}
\newcommand{\TRdot}{
\begin{gathered}
	\includegraphics[width=.035\textwidth]{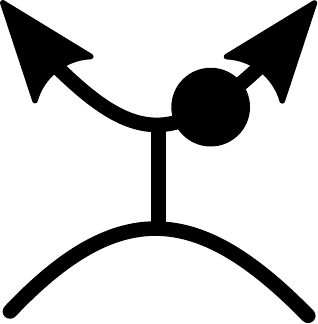}
	\vspace{-3pt}
\end{gathered}
}
\newcommand{\TLdot}{
\begin{gathered}
	\includegraphics[width=.035\textwidth]{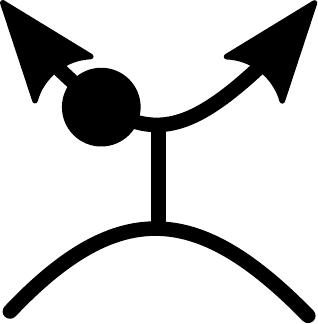}
	\vspace{-3pt}
\end{gathered}
}
\newcommand{\BRdot}{
\begin{gathered}
	\includegraphics[width=.035\textwidth]{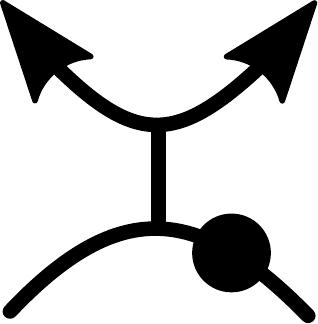}
	\vspace{-3pt}
\end{gathered}
}
\newcommand{\BLdot}{
\begin{gathered}
	\includegraphics[width=.035\textwidth]{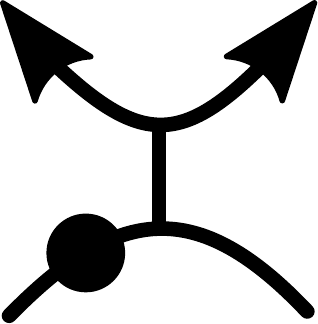}
	\vspace{-3pt}
\end{gathered}
}
\newcommand{\thickLoop}{
\begin{gathered}
	\includegraphics[width=.036\textwidth]{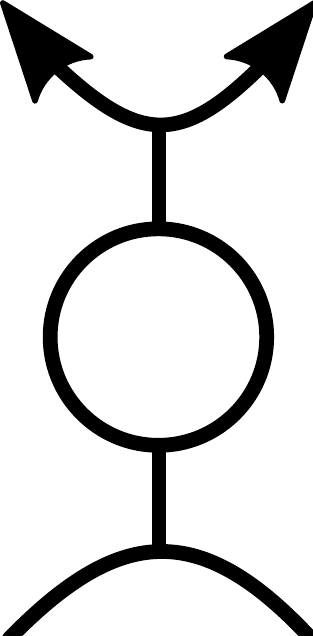}
	\vspace{-3pt}
\end{gathered}	
}
\newcommand{\loopTRdot}{
\begin{gathered}
	\includegraphics[width=.035\textwidth]{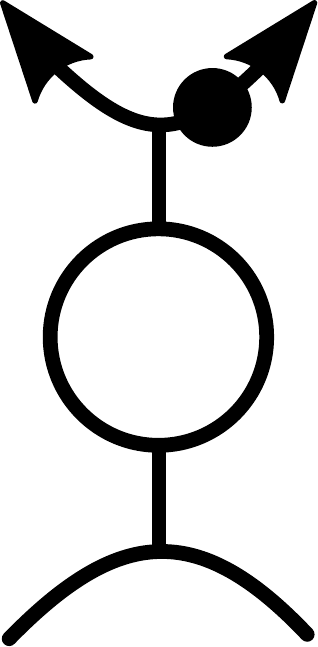}
	\vspace{-3pt}
\end{gathered}
}
\newcommand{\loopBRdot}{
\begin{gathered}
	\includegraphics[width=.035\textwidth]{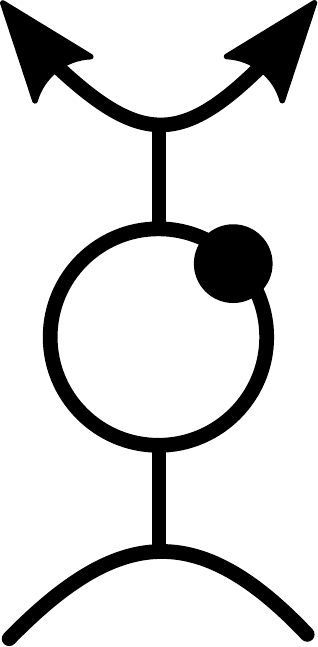}
	\vspace{-3pt}
\end{gathered}
}
\newcommand{\loopBLdot}{
\begin{gathered}
	\includegraphics[width=.035\textwidth]{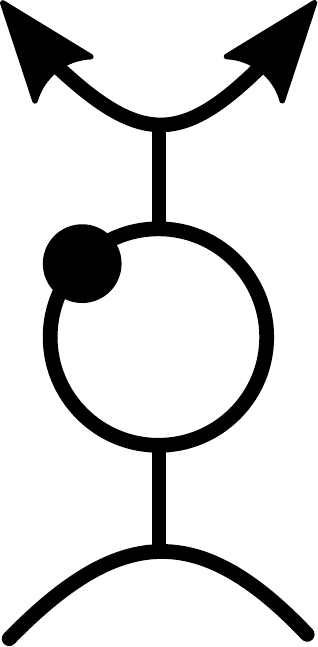}
	\vspace{-3pt}
\end{gathered}
}
\newcommand{\leftZip}{
\begin{gathered}
	\includegraphics[width=.12\textwidth]{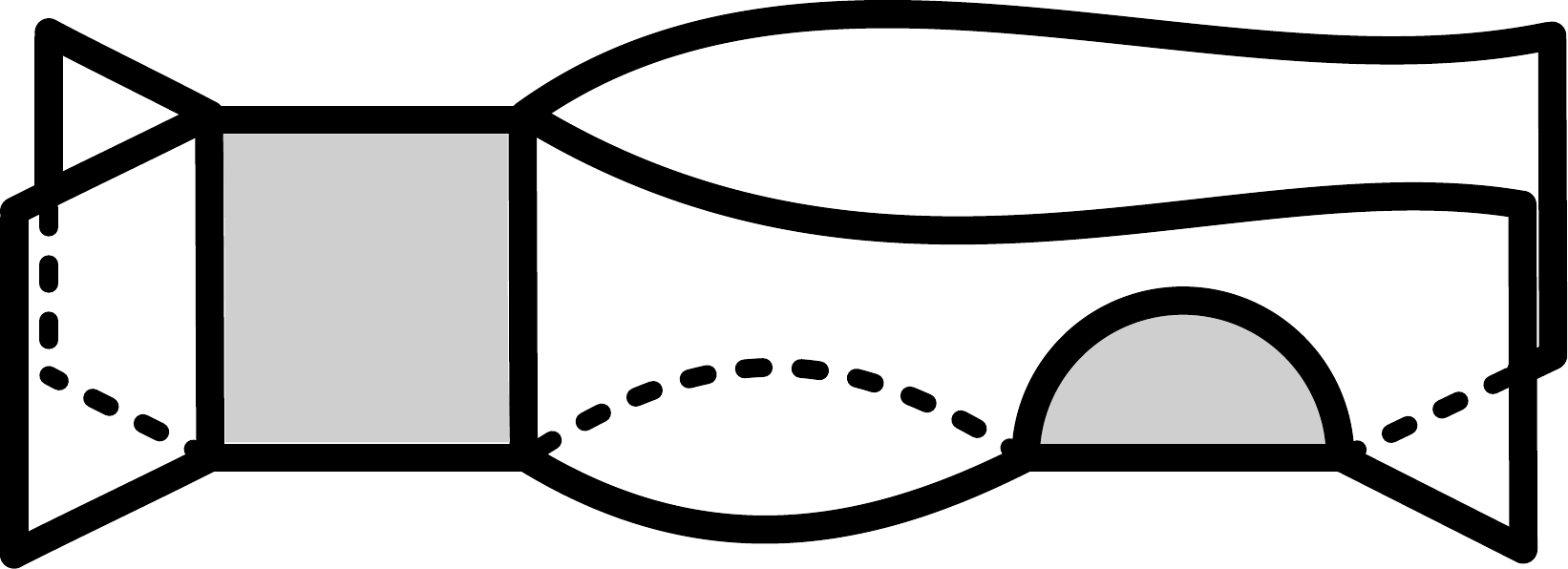}
	\vspace{-3pt}
\end{gathered}	
}
\newcommand{\rightZip}{
\begin{gathered}
	\includegraphics[width=.12\textwidth]{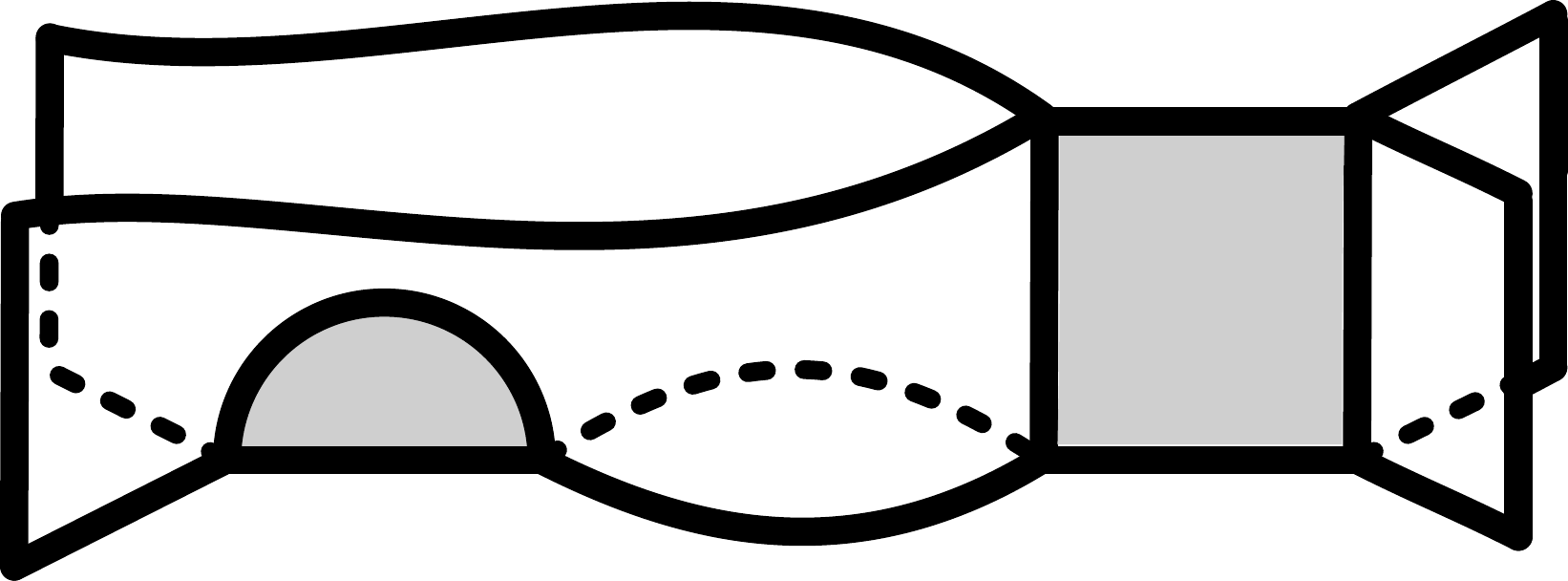}
	\vspace{-3pt}
\end{gathered}	
}
\newcommand{\cupFoam}{
\begin{gathered}
	\includegraphics[width=0.055\textwidth]{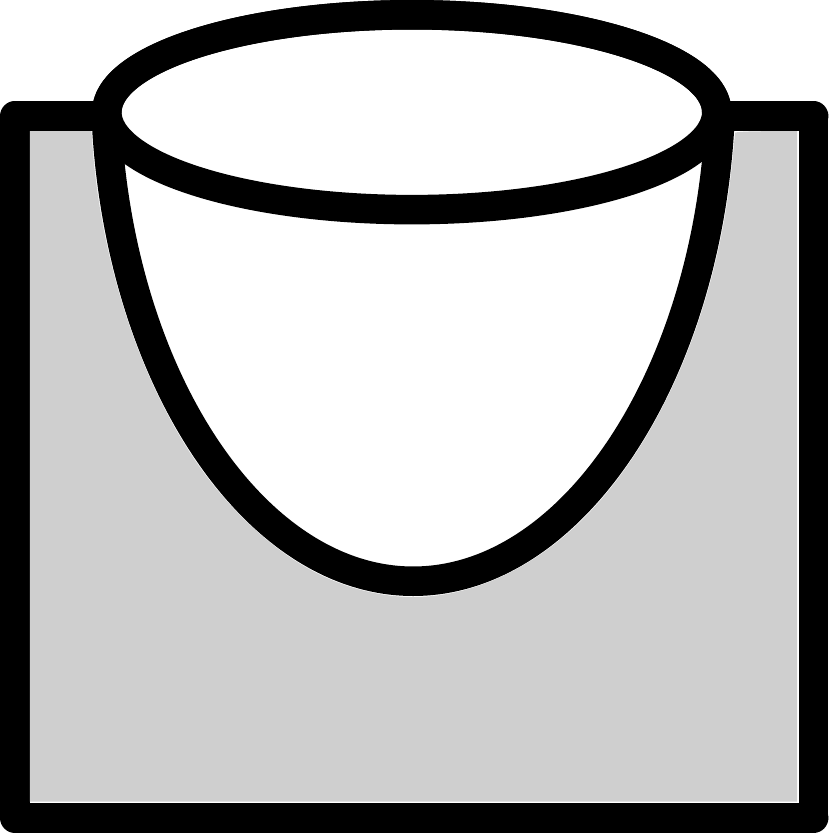}
	\vspace{-3pt}
\end{gathered}
}
\newcommand{\capFoam}{
\begin{gathered}
	\includegraphics[width=.055\textwidth]{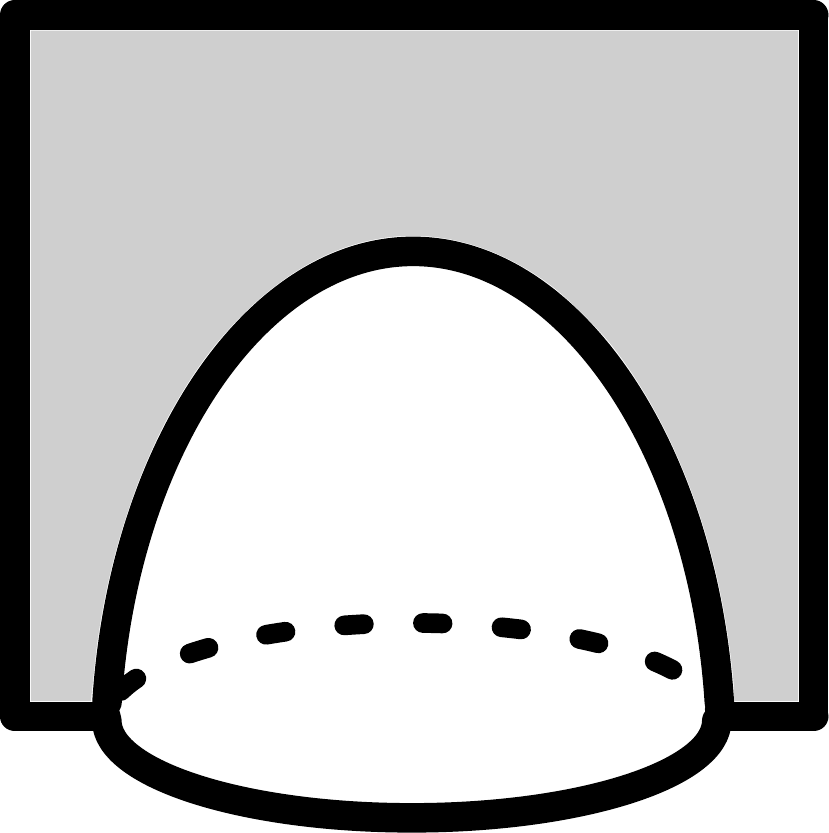}
	\vspace{-3pt}
\end{gathered}
}
\newcommand{\cupEx}{
\begin{gathered}
	\includegraphics[width=.12\textwidth]{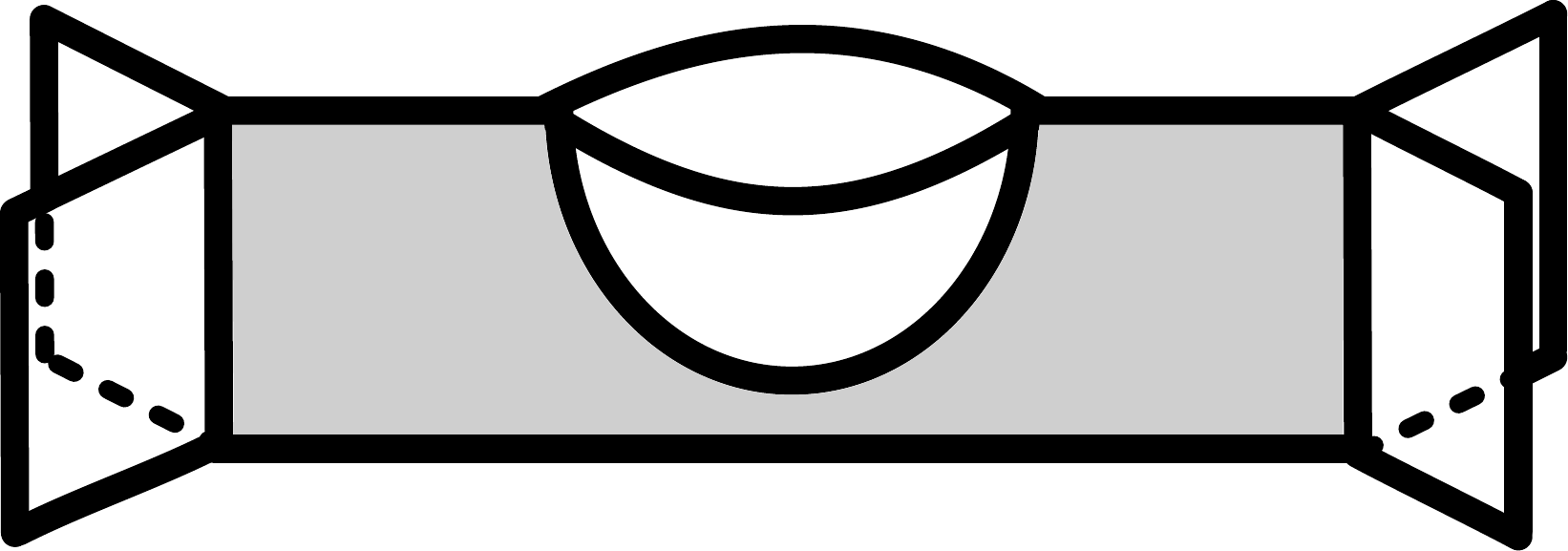}
	\vspace{-3pt}
\end{gathered}
}
\newcommand{\capEx}{
\begin{gathered}
	\includegraphics[width=.12\textwidth]{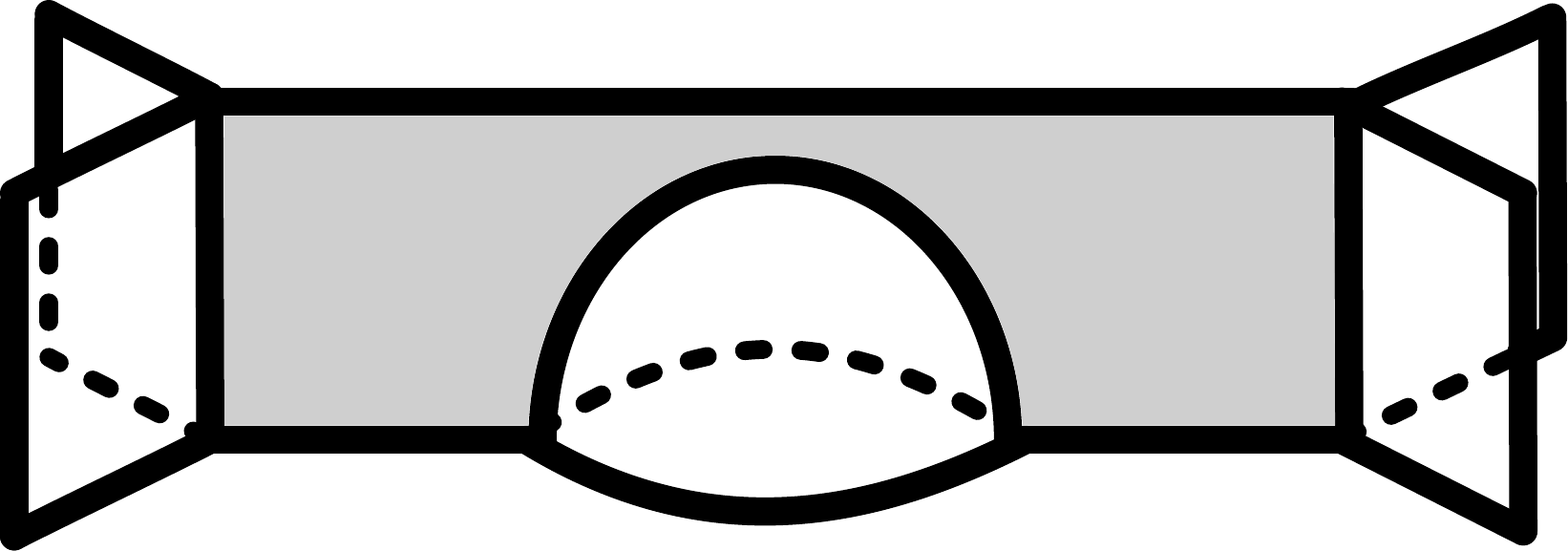}
	\vspace{-3pt}
\end{gathered}
}
\newcommand{\Ldot}{
\begin{gathered}
	\vspace{-3pt}
	\includegraphics[width=.025\textwidth]{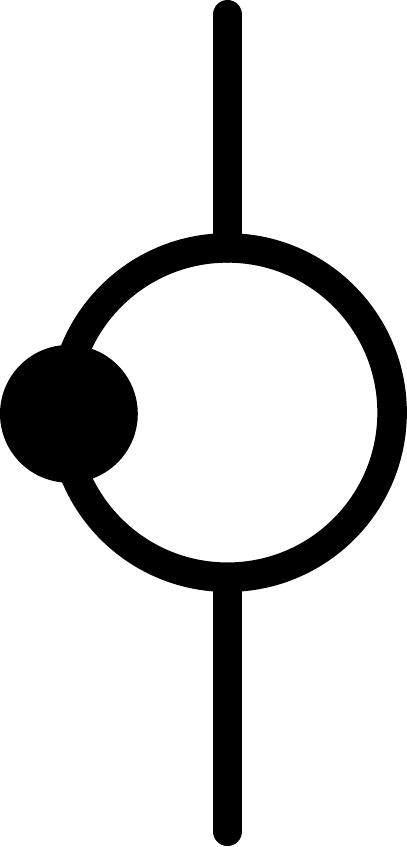}
\end{gathered}
}
\newcommand{\Rdot}{
\begin{gathered}
	\vspace{-3pt}
	\includegraphics[width=.025\textwidth]{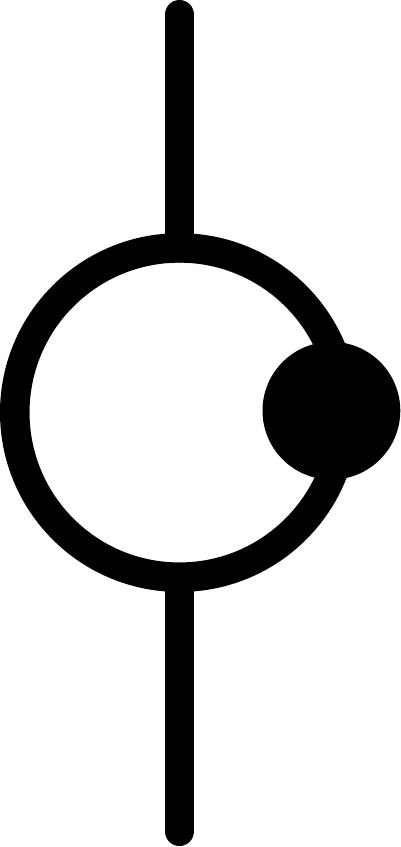}
\end{gathered}
}
\newcommand{\smallTheta}{
\begin{gathered}
	\includegraphics[width=.05\textwidth]{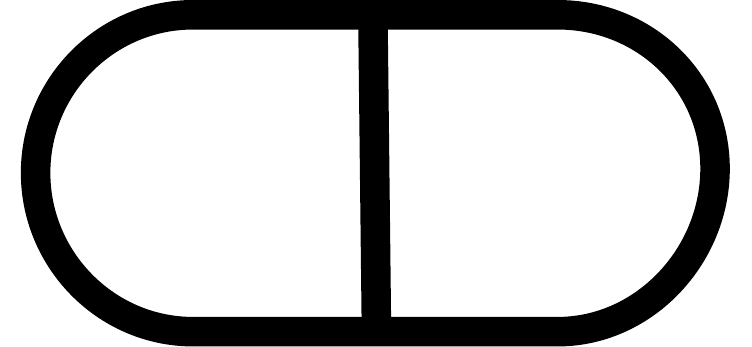}
	\vspace{-3pt}
\end{gathered}
}
\newcommand{\smallThetaR}{
\begin{gathered}
	\includegraphics[width=.05\textwidth]{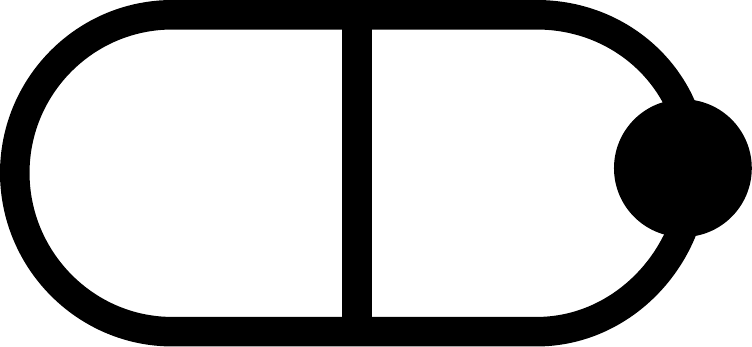}
	\vspace{-3pt}
\end{gathered}
}
\newcommand{\smallThetaL}{
\begin{gathered}
	\includegraphics[width=.05\textwidth]{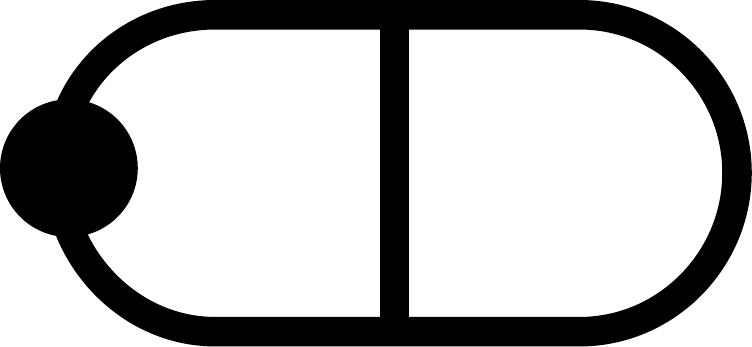}
	\vspace{-3pt}
\end{gathered}
}
\title{The Gysin sequence and the $\sl(N)$ homology of $T(2,m)$}
\author{Joshua Wang}
\date{}
\begin{document}
\maketitle

\begin{abstract}
	The $\sl(N)$ homology of the torus knot or link $T(2,m)$ may be calculated explicitly. By direct comparison, the result is isomorphic to the cohomology of a naturally associated space of $\SU(N)$ representations of the knot group. In honor of Tom Mrowka's 60th birthday, we explain how the Gysin exact sequence may be used to show that these groups are isomorphic without explicitly calculating them. 
\end{abstract}

\section{Introduction}

For $N \ge 2$, the $\sl(N)$ homology $\KR_N(L)$ of an oriented link $L \subset \R^3$ is a bigraded abelian group whose graded Euler characteristic is the $\sl(N)$ link polynomial \cite{MR2391017,MR3545951}. The invariant is the homology of a chain complex defined combinatorially from a link diagram using a cube of resolutions. With the bigrading suppressed, the invariants of the first few torus links $T(2,m)$ are given in the third column of Table~\ref{table:smallTorusLinks}.

\begin{table}[!ht]
	\centering
	\def\arraystretch{1.2}
	\begin{tabular}{ l|l|l|l }
		$m$ & name of $T(2,m)$ & $\KR_N(T(2,m))$ & $\sr R_N(T(2,m))$\\
		\hline
		$1$ & unknot & $\Z^N$ & $\CP^{N-1}$\\
		$2$ & Hopf link & $\Z^{N^2}$ & $\CP^{N-1} \sqcup \F(1,1;N)$\\
		$3$ & trefoil & $\Z^{3N-2}\oplus \Z/N$ & $\CP^{N-1} \sqcup UT\CP^{N-1}$\\
		$4$ & Solomon's knot & $\Z^{N^2 + 2N-2}\oplus \Z/N$ & $\CP^{N-1} \sqcup UT\CP^{N-1} \sqcup \F(1,1;N)$\\
		$5$ & cinquefoil & $\Z^{5N-4} \oplus (\Z/N)^2$ & $\CP^{N-1} \sqcup UT \CP^{N-1} \sqcup UT \CP^{N-1}$
	\end{tabular}
	\captionsetup{width=.8\linewidth}
	\caption{The $\sl(N)$ homology and $\SU(N)$ representation space of the torus link $T(2,m)$ for $m = 1,2,3,4,5$. For all $m$, it turns out that $\KR_N(T(2,m))$ is isomorphic to $H^*(\sr R_N(T(2,m)))$.}
	\label{table:smallTorusLinks}
\end{table}

These groups turn out to arise in another way, entirely unrelated to the definition of $\sl(N)$ homology: they are isomorphic to the ordinary cohomology of spaces naturally associated to these links. Let $\Phi$ be the diagonal matrix $e^{\pi i/N} \mathrm{diag}(-1,1,\ldots,1) \in \SU(N)$, and for any oriented link $L \subset \R^3$, define the $\SU(N)$ representation space $\sr R_N(L)$ to be the space of homomorphisms \[
	\sr R_N(L) \coloneqq \left\{\: \rho\colon \pi_1(\R^3\setminus L) \to \SU(N) \:\big|\: \rho(\mu) \text{ is conjugate to }\Phi \text{ for each meridian }\mu \:\right\}.
\]The space $\sr R_N(T(2,m))$ is straightforward to determine explicitly from a presentation of $\pi_1(\R^3\setminus T(2,m))$ having two meridional generators. The spaces associated to the first few torus links $T(2,m)$ are displayed in the fourth column of Table~\ref{table:smallTorusLinks} where $UT\CP^{N-1}$ is the unit tangent bundle of the projective space $\CP^{N-1}$, and $\F(1,1;N)$ is the partial flag manifold of pairs of orthogonal one-dimensional linear subspaces of $\C^N$. Using the isomorphisms \[
	H^*(\CP^{N-1}) \cong \Z^N \qquad\quad H^*(UT\CP^{N-1}) \cong \Z^{2N-2} \oplus \Z/N \qquad\quad H^*(\F(1,1;N)) \cong \Z^{N^2-N}
\]we see that the $\sl(N)$ homology of $T(2,m)$ matches the cohomology of $\sr R_N(T(2,m))$ for the listed values of $m$. This observation holds for all $m$.

\theoremstyle{plain}
\newtheorem{obs}[thm]{Observation}
\begin{obs}\label{obs:mainObs}
	$\KR_N(T(2,m))$ is isomorphic to $H^*(\sr R_N(T(2,m)))$ as an abelian group for any $m$. 
\end{obs}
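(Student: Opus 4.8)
The plan is to avoid computing either side in closed form and instead to realize $\KR_N(T(2,m))$ and $H^*(\sr R_N(T(2,m)))$ as outputs of the \emph{same} recursion in $m$: a long exact sequence with the two base cases $m=1,2$ supplied by Table~\ref{table:smallTorusLinks} and a connecting homomorphism that, on both sides, is multiplication by $N$. An induction on $m$ then forces the two graded groups to agree.

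\emph{The representation-variety side.} From the two-meridian presentation of $\pi_1(\R^3\setminus T(2,m))$ one checks directly (as in the discussion preceding the statement) that for $m\ge 3$ a single new component appears each time $m$ increases by $2$, namely $\sr R_N(T(2,m)) \cong \sr R_N(T(2,m-2))\ \sqcup\ UT\CP^{N-1}$. Hence it suffices to control $H^*(UT\CP^{N-1})$, and this is where the Gysin sequence enters: $UT\CP^{N-1}$ is the unit sphere bundle $S^{2N-3}\hookrightarrow UT\CP^{N-1}\xrightarrow{\ \pi\ }\CP^{N-1}$ of the tangent bundle, so there is an exact sequence
\[
  \cdots \to H^{k-2N+2}(\CP^{N-1}) \xrightarrow{\ \cup\, e\ } H^{k}(\CP^{N-1}) \xrightarrow{\ \pi^*\ } H^{k}(UT\CP^{N-1}) \to H^{k-2N+3}(\CP^{N-1}) \to \cdots
\]
with $e = e(T\CP^{N-1}) = N\,h^{N-1}\in H^{2N-2}(\CP^{N-1})$, the factor $N$ being $\chi(\CP^{N-1})$. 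Because $H^*(\CP^{N-1})=\Z[h]/(h^N)$ vanishes above degree $2N-2$, the map $\cup\,e$ is multiplication by $N$ from $H^0$ to $H^{2N-2}$ and is $0$ in every other degree; the Gysin sequence then yields $H^*(UT\CP^{N-1})\cong\Z^{2N-2}\oplus\Z/N$, the torsion being exactly $\coker(\times N\colon\Z\to\Z)$.

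\emph{The $\sl(N)$ side.} Resolving the last crossing of the closure of $\sigma_1^m$ presents the Khovanov--Rozansky complex of $T(2,m)$ as a mapping cone on the complex of $T(2,m-1)$ and the complex of the closed diagram carrying one wide edge. The foam relations simplify this: crossings are absorbed into the wide edge, and repeated digon (neck-cutting) moves reduce the singular term, so that after cancelling acyclic summands one is left with a long exact sequence relating $\KR_N(T(2,m))$, $\KR_N(T(2,m-2))$, and the homology of a two-step complex $\KR_N(U)\xrightarrow{\ \phi\ }\KR_N(U)$, where $U$ is the unknot and $\phi$ is the degree-$(2N-2)$ endomorphism induced by the $\sl(N)$ neck-cutting foam. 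The crucial point is that $\phi$ is multiplication by $N\,X^{N-1}$, the derivative of the $\sl(N)$ potential $X^N$; that is, under the standard identification $\KR_N(U)\cong\Z[X]/(X^N)\cong H^*(\CP^{N-1})$ the endomorphism $\phi$ \emph{is} cup product with the Euler class $e(T\CP^{N-1})$, and so the $\sl(N)$ exact triangle and the Gysin sequence above become the same long exact sequence. Thus $\KR_N(T(2,m))$, too, is $\KR_N(T(2,m-2))$ plus the homology of ``$\KR_N(U)\xrightarrow{\,\phi\,}\KR_N(U)$'', i.e.\ plus a shifted copy of $\Z^{2N-2}\oplus\Z/N$.

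\emph{Conclusion and the main difficulty.} With matching base cases ($m=1$: $\KR_N=\Z^N=H^*(\CP^{N-1})$; $m=2$: $\Z^{N^2}=H^*(\CP^{N-1}\sqcup\F(1,1;N))$) and matching one-step recursions, induction on $m$ gives $\KR_N(T(2,m))\cong H^*(\sr R_N(T(2,m)))$ as abelian groups. The main obstacle is the middle step: executing the foam-calculus reduction of the cube of resolutions carefully enough to (i) split off the $T(2,m-2)$ summand and (ii) pin the connecting map down to cup product with exactly $N\,h^{N-1}$ — the correct cyclic group $\Z/N$, the correct unit — so that the two long exact sequences literally coincide. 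Once this is done, the recurring ``$N$'' on both sides is explained by the single fact $\chi(\CP^{N-1})=N$; the remaining grading and shift bookkeeping, and the comparison of total ranks via the graded Euler characteristic of $\KR_N$ against the Poincar\'e polynomial of $\sr R_N(T(2,m))$, is routine.
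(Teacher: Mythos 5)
Your overall architecture---reduce both $\KR_N(T(2,m))$ and $H^*(\sr R_N(T(2,m)))$ to the same step-by-two recursion and identify the connecting two-term complex via a Gysin sequence---is the same as the paper's, and your treatment of the representation-variety side is essentially Proposition~\ref{prop:representationSpaceCalculation}. The difference, and the gap, is in \emph{which} Gysin sequence you match. You want the $\sl(N)$ recursion to output the two-term complex $\Z[X]/X^N \xrightarrow{NX^{N-1}} \Z[X]/X^N$ on the unknot state space, matching the sphere bundle $S^{2N-3}\to UT\CP^{N-1}\to\CP^{N-1}$. The paper's reduction (Proposition~\ref{prop:simplifyTwists}) instead produces two-term complexes on the state space of the $\theta$-web, which is $H^*(\F(1,1;N))$ of rank $N(N-1)$, with differential the difference of two dot maps, i.e.\ cup product with $c_1(\sr A)-c_1(\sr B)$; the matching Gysin sequence is that of the \emph{circle} bundle $S^1\to UT\CP^{N-1}\to\F(1,1;N)$, whose Euler class is computed in Proposition~\ref{prop:eulerClass}. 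The sphere-bundle sequence appears in the paper only as an after-the-fact device for computing $H^*(UT\CP^{N-1})$ explicitly and extracting the bigrading.

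The step you yourself flag as ``the main obstacle'' is a genuine gap, not bookkeeping. For $N\ge 3$ the wide-edge resolution of the closure of $\sigma_1^m$ closes up to the $\theta$-web, not to an unknot, so absorbing crossings and cancelling acyclic summands leaves you on a rank-$N(N-1)$ state space; there is no digon or neck-cutting move that converts the resulting dot-difference differential into the handle map $NX^{N-1}$ on $\Z[X]/X^N$. (For $N=2$ the two descriptions coincide, since $\F(1,1;2)=\CP^1$ and the $\theta$-web has a rank-two state space, which is presumably why the claim looks plausible.) An abstract chain homotopy equivalence between the two candidate two-term complexes does exist, but exhibiting it would require either computing the homology of the dot-difference complex---exactly what the Gysin argument is meant to avoid---or an independent foam computation you have not supplied. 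A smaller slip: $NX^{N-1}$ is not the derivative of the $\sl(N)$ potential, which is $X^{N+1}$ with derivative $(N+1)X^N=0$ in $\Z[X]/X^N$; it is the handle element $\chi(\CP^{N-1})\cdot[\CP^{N-1}]$, which is why your numerology nonetheless comes out right.
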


In this note, we explain how to prove Observation~\ref{obs:mainObs} without directly observing it. More specifically, we use the Gysin sequence to show that $\KR_N(T(2,m))$ and $H^*(\sr R_N(T(2,m)))$ are isomorphic without needing to explicitly compute either group. Explicitly calculating $H^*(\sr R_N(T(2,m)))$ is simpler than explicitly calculating $\KR_N(T(2,m))$, so Observation~\ref{obs:mainObs} may also be thought of as a way to compute $\KR_N(T(2,m))$. 

\begin{rem}
	Kronheimer and Mrowka's definition of singular instanton homology was motivated by the observation that the Khovanov homology of $T(2,m)$ coincides with the cohomology of $\sr R_2(T(2,m))$ \cite{MR2860345}. Observation~\ref{obs:mainObs} is the natural extension of this observation to $\sl(N)$ homology. Also see \cite{MR3190356,MR3125899}.
\end{rem}

\begin{rem}
	The original definition of $\sl(N)$ homology given by Khovanov and Rozansky \cite{MR2391017} requires $\Q$ coefficients. The $\sl(N)$ homology over $\Q$ of $T(2,m)$ for $m$ odd was first computed by Rasmussen \cite{MR2309174}. The author is unaware of a reference in the literature for a computation of $\KR_N(T(2,m))$ over $\Z$, though the result is known to some experts. 
\end{rem}

\begin{rem}
	A variety of the additional structure on $\sl(N)$ homology is reflected in the $\SU(N)$ representation space. Our proof of Observation~\ref{obs:mainObs} gives the complete bigrading on $\KR_N(T(2,m))$. For more, see \cite{https://doi.org/10.48550/arxiv.2211.08409}. 
\end{rem}

We explain the role of the Gysin sequence in the proof. The key geometric observation is that $UT\CP^{N-1}$ is the total space of a circle bundle $S^1 \to UT\CP^{N-1} \to \F(1,1;N)$ with base space $\F(1,1;N)$. The Gysin sequence of this circle bundle is the long exact sequence \[
	\begin{tikzcd}
		\cdots \ar[r] & H^{i+1}(UT\CP^{N-1}) \ar[r] & H^i(\F(1,1;N)) \ar[r,"\cup\,e"] & H^{i+2}(\F(1,1;N)) \ar[r] & H^{i+2}(UT\CP^{N-1}) \ar[r] & \cdots
	\end{tikzcd}
\]where $e \in H^2(\F(1,1;N))$ is the Euler class of the bundle. This class may be determined explicitly. Because $H^i(\F(1,1;N))$ vanishes when $i$ is odd, the kernel of the map \[
	\begin{tikzcd}[column sep=50pt]
		H^*(\F(1,1;N)) \ar[r,"\cup\,e"] & H^*(\F(1,1;N))
	\end{tikzcd}
\]is isomorphic to $H^{\text{odd}}(UT\CP^{N-1})$ while its cokernel is isomorphic to $H^{\text{even}}(UT\CP^{N-1})$. Phrased differently, $H^*(UT\CP^{N-1})$ is isomorphic to the homology of this two-term chain complex. The $\sl(N)$ complex associated to the trefoil $T(2,3)$ turns out to be homotopy equivalent to the direct sum of this particular two-term complex and the one-term complex associated to the planar unknot diagram. As a result, we are able to deduce that the $\sl(N)$ homology of $T(2,3)$ is isomorphic to $H^*(\CP^{N-1}) \oplus H^*(UT\CP^{N-1}) = H^*(\sr R_N(T(2,3)))$, without having explicitly calculated the homology of the given two-term complex. 

Note that a similar argument using the Gysin sequence of the sphere bundle $S^{2N-3} \to UT\CP^{N-1} \to \CP^{N-1}$ identifies $H^*(UT\CP^{N-1})$ with the homology of the complex \[
	\begin{tikzcd}[column sep=80pt]
		H^*(\CP^{N-1}) \ar[r,"{\cup \,\chi(\CP^{N-1})\cdot[\CP^{N-1}]}"] & H^*(\CP^{N-1})
	\end{tikzcd}
\]where $[\CP^{N-1}] \in H^{2N-2}(\CP^{N-1})$ is the orientation class and $\chi$ is the Euler characteristic. Under the standard identification $H^*(\CP^{N-1}) = \Z[X]/X^N$, the above complex is \[
	\begin{tikzcd}[column sep=75pt]
		\displaystyle\frac{\Z[X]}{X^N} \ar[r,"NX^{N-1}"] & \displaystyle\frac{\Z[X]}{X^N} 
	\end{tikzcd}
\]whose homology is easily seen to be $\Z^{2N-2} \oplus \Z/N$. 

In section~\ref{sec:SU(N)RepSpaces}, we compute the $\SU(N)$ representation spaces of $T(2,m)$ from the definition. We then show that $UT\CP^{N-1}$ is a circle bundle over $\F(1,1;N)$ and compute its Euler class. In section~\ref{sec:sl(N)homology}, we describe the construction of $\sl(N)$ homology, emphasizing connections to the spaces $\CP^{N-1}$ and $\F(1,1;N)$. Finally, we prove Observation~\ref{obs:mainObs} in section~\ref{sec:proofOfMainObs}.

\theoremstyle{definition}
\newtheorem*{ack}{Acknowledgments}
\begin{ack}
	Many thanks to my advisor Peter Kronheimer for his continued guidance, support, and encouragement. This material is based upon work supported by the NSF GRFP through grant DGE-1745303.
\end{ack}

\section{\texorpdfstring{$\SU(N)$}{SU(N)} representation spaces}\label{sec:SU(N)RepSpaces}

We first compute the $\SU(N)$ representation spaces associated to the torus links $T(2,m)$. We then explain how $UT\CP^{N-1}$ is a circle bundle over $\F(1,1;N)$ and compute its Euler class. 

\begin{lem}\label{lem:conjugacyClassOfPhi}
	The conjugacy class of $\Phi \in \SU(N)$ may be identified with $\CP^{N-1}$ by sending each matrix to its eigenspace with eigenvalue $-e^{i\pi/N}$. 
\end{lem}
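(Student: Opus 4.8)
The plan is to exhibit an explicit diffeomorphism between the conjugacy class $\mathcal C_\Phi \subset \SU(N)$ and $\CP^{N-1}$, realized by the eigenspace map. First I would observe that $\Phi = e^{i\pi/N}\mathrm{diag}(-1,1,\ldots,1)$ has exactly two distinct eigenvalues: the simple eigenvalue $-e^{i\pi/N}$ with one-dimensional eigenspace, and the eigenvalue $e^{i\pi/N}$ with $(N-1)$-dimensional eigenspace, and that these two eigenspaces are orthogonal complements of one another. Any conjugate $g\Phi g^{-1}$ has the same eigenvalues with the same multiplicities, so it is again determined by the orthogonal splitting $\C^N = L \oplus L^\perp$ where $L = \Eig(g\Phi g^{-1}, -e^{i\pi/N}) = g\cdot\Eig(\Phi, -e^{i\pi/N})$ is a line. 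Conversely, given any line $L \subset \C^N$, the unitary operator acting as $-e^{i\pi/N}$ on $L$ and $e^{i\pi/N}$ on $L^\perp$ has determinant $(-e^{i\pi/N})\cdot(e^{i\pi/N})^{N-1} = -e^{i\pi} = 1$ (using $e^{i\pi} = -1$), hence lies in $\SU(N)$, and is visibly conjugate to $\Phi$ since any two lines are carried to one another by a unitary transformation. This shows the eigenspace map $\mathcal C_\Phi \to \CP^{N-1}$, $A \mapsto \Eig(A, -e^{i\pi/N})$, is a bijection.

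Next I would upgrade this bijection to a diffeomorphism. The cleanest route is to note that $\SU(N)$ acts transitively on $\mathcal C_\Phi$ by conjugation and transitively on $\CP^{N-1}$ in the standard way, and the eigenspace map is $\SU(N)$-equivariant; the stabilizer of $\Phi$ under conjugation is exactly the subgroup preserving the splitting $L_0 \oplus L_0^\perp$ with determinant one, which is precisely the stabilizer of the line $L_0 = \Eig(\Phi,-e^{i\pi/N}) \in \CP^{N-1}$. Hence both spaces are identified with the same homogeneous space $\SU(N)/\mathrm{S}(\U(1)\times\U(N-1))$, and the eigenspace map is the induced $\SU(N)$-equivariant diffeomorphism. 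Alternatively, one checks directly that $A \mapsto \Eig(A,-e^{i\pi/N})$ is smooth (the eigenspace of a simple eigenvalue depends smoothly on the matrix) with smooth inverse $L \mapsto (-e^{i\pi/N})\pi_L + e^{i\pi/N}\pi_{L^\perp}$, where $\pi_L$ is orthogonal projection onto $L$, which depends smoothly on $L \in \CP^{N-1}$.

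I do not expect a serious obstacle here; this is essentially a bookkeeping lemma. The only point requiring a moment's care is the determinant computation confirming that the reconstructed operator genuinely lands in $\SU(N)$ rather than merely $\U(N)$ — this is exactly where the normalization factor $e^{i\pi/N}$ in the definition of $\Phi$ is used, since it forces the product of all $N$ eigenvalues to equal $1$. I would state the lemma's proof in the equivariant-homogeneous-space form, as it makes the smooth structure transparent and sets up the identification with the flag manifold $\F(1,1;N)$ needed later for the Hopf link and for the circle bundle $S^1 \to UT\CP^{N-1} \to \F(1,1;N)$.
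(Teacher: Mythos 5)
Your proof is correct and follows essentially the same route as the paper's: injectivity because the $(-e^{i\pi/N})$-eigenspace determines its orthogonal complement, which is the other eigenspace, and surjectivity from transitivity of the $\SU(N)$-action on lines. The extra material you include --- the explicit inverse $L \mapsto (-e^{i\pi/N})\pi_L + e^{i\pi/N}\pi_{L^\perp}$ with its determinant check, and the identification of both sides with the homogeneous space $\SU(N)/\mathrm{S}(\U(1)\times\U(N-1))$ to get smoothness --- is all sound, just more than the paper bothers to record.
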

\begin{proof}
	If $e_1,\ldots,e_N$ is the standard basis of $\C^N$, then $\C\cdot e_1$ is the $(-e^{i\pi/N})$-eigenspace of $\Phi$. Let $U \in \SU(N)$, and note that the $(-e^{i\pi/N})$-eigenspace of $U\Phi U^{-1}$ is $U(\C \cdot e_1)$, which is the image of the line $\C\cdot e_1$ under the linear transformation $U$. Because $\SU(N)$ acts transitively on $\CP^{N-1}$, the map from the conjugacy class of $\Phi$ to $\CP^{N-1}$ described in the lemma is surjective. To see that it is also injective, first note that the $(e^{i\pi/N})$-eigenspace of $U\Phi U^{-1}$ is the image of the span of $e_2,\ldots,e_N$ under the map $U$. This image is precisely the orthogonal complement of $U(\C\cdot e_1)$ because $U$ is unitary. Hence, the $(-e^{i\pi/N})$-eigenspace of $U\Phi U^{-1}$ determines its $(e^{i\pi/N})$-eigenspace and therefore the matrix $U\Phi U^{-1}$ itself. 
\end{proof}

\begin{lem}\label{lem:fundamentalGroupOfT(2,m)}
	The fundamental group of the complement of $T(2,m)$ for $m \ge 1$ is given by the presentation \[
		\pi_1(\R^3\setminus T(2,m)) = \begin{cases}
			\langle\,a,b\:|\:(ab)^ka = (ba)^kb \,\rangle & m \text{ is odd and }m = 2k+1\\
			\langle \, a,b\:|\: (ab)^k = (ba)^k \,\rangle & m \text{ is even and }m = 2k
		\end{cases}
	\]where the generators $a,b$ are meridians.
\end{lem}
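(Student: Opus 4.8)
The plan is to read a Wirtinger presentation off the standard diagram of $T(2,m)$ and then eliminate all but two of the meridional generators. Realize $T(2,m)$ as the closure of the $2$-strand braid $\sigma_1^{\,m}$, i.e.\ a vertical tower of $m$ crossings closed up by two trivial bands. Reading down the tower, write $c_0 = a$ and $c_{-1} = b$ for the two arcs at the top, and let $c_1, \dots, c_m$ be the arcs produced successively at the $m$ under-passes. With a fixed choice of crossing sign and of orientations on the components, the Wirtinger relation at the $j$-th crossing is $c_j = c_{j-1}\,c_{j-2}\,c_{j-1}^{-1}$ for $j = 1, \dots, m$, while the two closure bands identify $c_{m-1}$ with $b$ and $c_m$ with $a$. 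So I would start from the presentation with generators $a, b, c_1, \dots, c_{m-2}$ and the $m$ crossing relations, reading $c_{m-1}$ as $b$ and $c_m$ as $a$: the first $m-2$ of these relations define $c_1, \dots, c_{m-2}$ recursively as words in $a$ and $b$, and the last two become relations purely in $a$ and $b$.

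To finish I would solve the recursion in closed form. The key point is the identity $c_j\,c_{j-1} = ab$ for all $j \ge 1$: it holds when $j = 1$ because $c_1 c_0 = (aba^{-1})a = ab$, and it propagates because $c_j c_{j-1} = c_{j-1}c_{j-2}c_{j-1}^{-1}c_{j-1} = c_{j-1}c_{j-2}$. Hence $c_j = (ab)\,c_{j-1}^{-1}$, and unwinding gives $c_{2i} = (ab)^i\,a\,(ab)^{-i}$ and $c_{2i+1} = (ab)^{i+1}\,a^{-1}\,(ab)^{-i}$. Substituting into the two surviving relations: when $m = 2k+1$, the relation $c_{m-1} = c_{2k} = b$ reads $(ab)^k a (ab)^{-k} = b$, that is $(ab)^k a = b(ab)^k$, which equals $(ab)^k a = (ba)^k b$ since $b(ab)^k = (ba)^k b$; when $m = 2k$, the relation $c_{m-1} = c_{2k-1} = b$ reads $(ab)^k a^{-1} (ab)^{-(k-1)} = b$, that is $(ab)^k = b(ab)^{k-1}a = (ba)^k$. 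In either case a short computation shows the remaining relation $c_m = a$ follows from the one just derived — as it must, since a Wirtinger presentation always has one redundant relation — so after discarding it together with the generators $c_1, \dots, c_{m-2}$ one is left with exactly the two-generator, one-relator presentation in the statement, with $a$ and $b$ meridians by construction.

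The one place that needs care is the choice of conventions made at the outset: the sign of the crossings in $\sigma_1^{\,m}$, which controls whether the crossing relation reads $c_{j-1}c_{j-2}c_{j-1}^{-1}$ or $c_{j-1}^{-1}c_{j-2}c_{j-1}$, and — when $m$ is even — the choice of orientations of the two components of $T(2,m)$ that makes all crossings of the same sign. Because the asserted relator is preserved by swapping $a \leftrightarrow b$ and by inverting both sides, any self-consistent set of conventions produces the same presentation, so I would fix one and not belabor it. One could instead start from the classical torus-link presentation $\langle u, v \mid u^2 = v^m\rangle$ and apply Tietze transformations, but writing a meridian as a word in $u$ and $v$ and then changing generators is messier than the direct Wirtinger computation, so that is not the route I would take.
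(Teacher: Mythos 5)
Your proof is correct and follows exactly the route the paper takes: the paper's entire proof is the one-line remark that the presentation follows from the Wirtinger presentation of the closure of $\sigma_1^m$, and your write-up simply carries out that computation in detail (the recursion $c_jc_{j-1}=ab$, the closed forms for $c_{2i}$ and $c_{2i+1}$, and the substitution into the closure relations all check out).
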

\begin{proof}
	This follows from the Wirtinger presentation associated to the diagram of $T(2,m)$ obtained as the braid closure of the two-stranded braid $\sigma_1^m \in B_2 = \Z\langle \sigma_1\rangle$.
\end{proof}

\begin{prop}\label{prop:representationSpaceCalculation}
	The $\SU(N)$ representation space of $T(2,m)$ for $m \ge 1$ is \[
		\sr R_N(T(2,m)) = \begin{cases}
			\CP^{N-1} \sqcup \bigsqcup^k UT\CP^{N-1} & m \text{ is odd and }m=2k+1\\
			\CP^{N-1} \sqcup \bigsqcup^{k-1} UT\CP^{N-1} \sqcup \F(1,1;N) & m\text{ is even and }m=2k
		\end{cases}
	\]
\end{prop}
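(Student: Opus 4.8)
The plan is to analyze the representation variety directly from the two-generator presentations of Lemma~\ref{lem:fundamentalGroupOfT(2,m)}. A point of $\sr R_N(T(2,m))$ is a pair $(A,B) \in \SU(N)^2$ with $A$ and $B$ each conjugate to $\Phi$, subject to the single relation coming from the braid word. First I would set $C = AB$ and record that $BA = A^{-1}CA$, so that powers of $BA$ are conjugate (via $A$) to the corresponding powers of $C$; this is what makes the relations tractable. In the odd case $m = 2k+1$ the relation $(AB)^kA = (BA)^kB$ becomes $C^kA = A^{-1}C^kA\cdot B = A^{-1}C^{k+1}$, i.e. $AC^kA = C^{k+1}$, which after conjugating says $C^{k+1} = A C^k A$; combined with $A^2 = -\mathrm{Id}$ (note $\Phi^2 = -\mathrm{Id}$, so $A^2 = -\mathrm{Id}$ for $A$ conjugate to $\Phi$), one gets that $A$ conjugates $C^k$ to $C^{-(k+1)}$ up to sign, forcing strong constraints on the eigenvalues of $C$. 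In the even case $m = 2k$ the relation $(AB)^k = (BA)^k$ becomes $C^k = A^{-1}C^kA$, i.e. $A$ commutes with $C^k$.

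The heart of the argument is then a case analysis on the spectrum of $C = AB$. Because $A$ and $B$ are both reflections up to scalar (they satisfy $A^2 = B^2 = -\mathrm{Id}$), the subgroup they generate is a quotient of an infinite dihedral group twisted by the center, so $\C^N$ decomposes $\langle A, B\rangle$-equivariantly into one- and two-dimensional pieces; on each two-dimensional piece $C$ acts as a rotation by some angle $2\theta$, and on the one-dimensional pieces $A$ and $B$ act by the same scalar. The eigenvalue conditions from the relation pin down which rotation angles $2\theta$ are allowed: a discrete set of values, one "generic" family for each of the $k$ (resp. $k-1$) interior solutions, plus boundary values where the two-dimensional block degenerates. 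For each allowed generic angle, the choice of the invariant $2$-plane together with the further data needed to specify $A$ and $B$ on it sweeps out a copy of $UT\CP^{N-1}$ — concretely, an eigenline of $\Phi$ inside a $2$-plane, with the $2$-plane varying — and I would identify this explicitly using Lemma~\ref{lem:conjugacyClassOfPhi}. The degenerate boundary solution, which occurs only in the even case, is where $A$ and $B$ are simultaneously diagonalizable in compatible blocks and the relation is satisfied trivially; there the data is an unordered-then-ordered pair of orthogonal lines, giving $\F(1,1;N)$, and the fully degenerate solution $B$ = (scalar)$\cdot A$ gives the component $\CP^{N-1}$ present for all $m$.

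The main obstacle I anticipate is the bookkeeping in the case analysis: showing that the components are \emph{disjoint} (that no sequence of representations with one rotation angle limits into another stratum, so the pieces really are connected components and not glued), and checking that for each admissible angle the solution set is exactly $UT\CP^{N-1}$ rather than some branched or singular modification of it. Establishing the $UT\CP^{N-1}$ identification cleanly is where I would be most careful: the unit tangent bundle appears because, having fixed the rotation $2$-plane $P$ and the eigenvalue data, the remaining freedom is a choice of a unit vector (the axis of $A|_P$) in $P$, fibered over the choice of $P \in \F$, which is precisely the sphere bundle of a rank-two bundle over a flag variety, and one must match this with the standard description $S^{2N-3} \to UT\CP^{N-1} \to \CP^{N-1}$ or the circle-bundle description over $\F(1,1;N)$. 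Counting: the odd case yields $k$ interior angles hence $k$ copies of $UT\CP^{N-1}$; the even case yields $k-1$ interior angles plus the one $\F(1,1;N)$ stratum, matching the statement. I would organize the proof so that the generic-stratum analysis is done once and then specialized to the parity cases.
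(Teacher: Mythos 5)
Your overall strategy is, in substance, the one the paper uses: a representation is determined by the pair of eigenlines $(\Lambda_A,\Lambda_B)$ of $\rho(a),\rho(b)$, everything reduces to the $2$-plane they span (on whose orthogonal complement both matrices act by the scalar $e^{i\pi/N}$), and the relation forces the Hermitian angle between the lines into a discrete set, with $\theta=0$, $\theta=\pi/2$, and the interior values giving $\CP^{N-1}$, $\F(1,1;N)$, and $UT\CP^{N-1}$ respectively. The paper packages this via the $\SU(N)$-equivariant embedding $\sr R_N(T(2,m))\hookrightarrow \CP^{N-1}\times\CP^{N-1}$ and the identification of the orbit space of the diagonal action with $[0,\pi/2]$ (Lemma~\ref{lem:principalAngle}); that packaging disposes of your worries about disjointness and singular strata for free, since the image is a union of orbits, the orbits are level sets of the continuous angle function, and the admissible angles $m\theta\in\Z\pi$ are isolated in $[0,\pi/2]$.

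Two points in your write-up are wrong as stated and would derail the computation if carried through literally. First, $\Phi^2=-\Id$ is false for $N>2$: one has $\Phi^2=e^{2\pi i/N}\Id$, so $A^{-1}=e^{-2\pi i/N}A$ and $A$ conjugates $C=AB$ to $e^{4\pi i/N}C^{-1}$, not to $C^{-1}$ ``up to sign.'' The roots of unity do conspire so that both parities collapse to the single condition $(e^{-2\pi i/N}AB)^m=\Id$, equivalently $m\theta\in\Z\pi$ --- this is exactly the computation in the paper's proof --- but you must track the scalar $e^{2\pi i/N}$ rather than $-1$, or the eigenvalue conditions you extract will be wrong for $N>2$. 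Second, your identification of a generic stratum is not coherent as sketched: the pairs of lines at fixed angle $\theta\in(0,\pi/2)$ fiber over $\G(2,N)$ with fiber $UTS^2\cong\RP^3$, not over a flag variety with $S^3$ fiber (a dimension count already rules that out). The clean route, which is Lemma~\ref{lem:orbitCalculation}, is to fix $\Lambda_A$ and observe that the set of $\Lambda_B$ at Fubini--Study distance $\theta$ from it is the image under $\exp_{\Lambda_A}$ of the sphere of radius $\theta$ in $T_{\Lambda_A}\CP^{N-1}$, exhibiting the stratum directly as the bundle $S^{2N-3}\to UT\CP^{N-1}\to\CP^{N-1}$. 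With those repairs your counting of admissible angles, and hence of components, is correct.
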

\begin{proof}
	By Lemma~\ref{lem:fundamentalGroupOfT(2,m)}, a representation $\rho \in \sr R_N(T(2,m))$ is determined by the values $\rho(a)$ and $\rho(b)$, which lie in the conjugacy class of $\Phi$. By Lemma~\ref{lem:conjugacyClassOfPhi}, we may identify the conjugacy class of $\Phi$ with $\CP^{N-1}$. Hence, there is an embedding \[
		\Psi\colon \sr R_N(T(2,m)) \hookrightarrow \CP^{N-1} \x \CP^{N-1}
	\]given by sending $\rho$ to the pair of $(-e^{i\pi/N})$-eigenspaces of $\rho(a)$ and $\rho(b)$. We let $\Lambda_a(\rho)$ and $\Lambda_b(\rho)$ denote these eigenspaces so that $\Psi(\rho) = (\Lambda_a(\rho),\Lambda_b(\rho))$.  
	Next, note that if $U \in \SU(N)$, then the representation $U \rho U^{-1}$ given by $\gamma\mapsto U\rho(\gamma)U^{-1}$ also lies in $\sr R_N(T(2,m))$. This defines a conjugation action of $\SU(N)$ on $\sr R_N(T(2,m))$.
	Now observe that \[
		\Lambda_a(U\rho U^{-1}) = U(\Lambda_a(\rho))
	\]where $U(\Lambda_a(\rho))$ is the image of the line $\Lambda_a(\rho) \subset \C^N$ under the linear transformation $U$. Similarly, we have $\Lambda_b(U\rho U^{-1}) = U(\Lambda_b(\rho))$. Hence, the embedding $\Psi$ is $\SU(N)$-equivariant, where $\SU(N)$ acts on $\CP^{N-1} \x \CP^{N-1}$ diagonally.

	The orbit space of the diagonal $\SU(N)$ action on $\CP^{N-1} \x \CP^{N-1}$ may be identified with the closed interval $[0,\pi/2]$. The identification is given by sending a pair of lines $(\C\cdot v, \C\cdot w)$ to the Hermitian angle $\theta \in [0,\pi/2]$ between them, which is defined by the equation \[
		\|v\|\,\|w\|\cos\theta = |v\cdot w|.
	\]We prove this statement in Lemma~\ref{lem:principalAngle} below. Since $\Psi$ is $\SU(N)$-equivariant, it follows that $\Psi(\sr R_N(T(2,m))) \subset \CP^{N-1}\x\CP^{N-1}$ is a union of orbits. 

	We now show that $\Psi(\sr R_N(T(2,m)))$ is the union of the orbits corresponding to the values of $\theta \in [0,\pi/2]$ satisfying $m\theta \in \Z\pi$. Suppose the orbit corresponding to $\theta \in [0,\pi/2]$ lies in $\Psi(\sr R_N(T(2,m)))$. The lines $\C\cdot e_1$ and $\C\cdot((\cos\theta) e_1 + (\sin\theta) e_2)$ have Hermitian angle $\theta$ so there must be a representation $\rho\colon\pi_1(\R^3\setminus T(2,m)) \to \SU(N)$ for which $\Lambda_a(\rho)$ and $\Lambda_b(\rho)$ are precisely the two given lines. Then $\rho(a) = \Phi$ because the $(-e^{i\pi/N})$-eigenspace of $\Phi$ is $\C\cdot e_1$. Since \[
		U_\theta = \begin{pmatrix}
			\cos\theta & -\sin\theta & 0\\
			\sin\theta & \cos\theta & 0\\
			0 & 0 & \Id_{N-2}
		\end{pmatrix} \in \SU(N)
	\]sends $e_1$ to $(\cos\theta)e_1 + (\sin\theta)e_2$, it follows that $\rho(b) = U_\theta\Phi U_{-\theta}$, using the identity $(U_\theta)^{-1} = U_{-\theta}$. Now note that \[
		\Phi^{-1} = e^{-\pi i/N} \mathrm{diag}(-1,1,\ldots,1) = e^{-2\pi i/N} \Phi
	\]from which it follows that $\rho(a)^{-1} = e^{-2\pi i/N}\rho(a)$ and $\rho(b)^{-1} = e^{-2\pi i/N}\rho(b)$. We may therefore express the relation of Lemma~\ref{lem:fundamentalGroupOfT(2,m)} as \[
		(e^{-2\pi i/N}\rho(b)\rho(a))^m = \Id \qquad\Longleftrightarrow\qquad \begin{cases}
			(\rho(a)\rho(b))^k\rho(a) = (\rho(b)\rho(a))^k\rho(b) & m\text{ is odd and }m = 2k + 1\\
			(\rho(a)\rho(b))^k = (\rho(b)\rho(a))^k & m\text{ is even and }m = 2k
		\end{cases}
	\]Note that $(e^{-i\pi/N}\Phi)U_{-\theta}(e^{-i\pi/N}\Phi) = U_\theta$ because $(e^{-i\pi/N}\Phi) = \mathrm{diag}(-1,1,\ldots,1)$ is reflection in the first coordinate while $U_{-\theta}$ is rotation by $-\theta$ in the first two coordinates. Thus \[
		e^{-2\pi i/N}\rho(b)\rho(a) = e^{-2i\pi/N}U_\theta\Phi U_{-\theta}\Phi = (U_{\theta})^2 = U_{2\theta}
	\]so the relation can be further simplified to $U_{2m\theta} = \Id$. Since $U_{2m\theta}$ is rotation by $2m\theta$ in the first two coordinates, we see that $2m\theta \in 2\Z\pi$ which is equivalent to $m\theta \in \Z\pi$. 

	We have proven that if the orbit corresponding to $\theta$ lies in $\Psi(\sr R_N(T(2,m)))$, then $m\theta \in \Z\pi$. The work we have done also verifies the converse. Given $\theta \in [0,\pi/2]$ satisfying $m\theta\in\Z\pi$, we define the representation $\rho\colon \pi_1(\R^3\setminus T(2,m)) \to \SU(N)$ by sending $\rho(a) = \Phi$ and $\rho(b) = U_\theta\Phi U_{-\theta}$. To finish the proof, we must verify that the orbit in $\CP^{N-1} \x \CP^{N-1}$ corresponding to $\theta \in [0,\pi/2]$ is \[
		\text{Orbit corresponding to }\theta = \begin{cases}
			\CP^{N-1} & \theta = 0\\
			UT\CP^{N-1} & 0 < \theta < \pi/2\\
			\F(1,1;N) & \theta = \pi/2
		\end{cases}
	\]which we prove in Lemma~\ref{lem:orbitCalculation} below. See Figure~\ref{fig:repSpaces}. 
\end{proof}

\begin{figure}[!ht]
	\centering
	\raisebox{-4pt}{\begin{minipage}{.3\textwidth}
			\begin{tikzpicture}
			\node (A) at (0,-.01) {$\bullet$};
			\node (B) at (2,-.01) {$\bullet$};
			\node (C) at (4,-.01) {$\bullet$};
			\node (D) at (0,2) {$\smash{\CP^{N-1}}$};
			\node (E) at (2,2) {$\smash{UT\CP^{N-1}}$};
			\node (F) at (4,2) {$\smash{\F(1,1;N)}$};
			\node (G) at (0,-.4) {$\theta = 0$};
			\node (H) at (2.04,-.4) {$0 < \theta < \smash{\frac{\pi}{2}}$};
			\node (I) at (4.03,-.4) {$\theta = \smash{\frac{\pi}{2}}$};
	
			\draw (0,0) -- (4,0);
			\draw[->]
				(D) edge (A) (E) edge (B) (F) edge (C);
		\end{tikzpicture}
		\end{minipage}}
	\hspace{40pt}
	\begin{minipage}{.5\textwidth}
		\def\arraystretch{1.2}
	\begin{tabular}{ l|l|c }
		$m$ & name of $T(2,m)$ & $\{\,0\leq\theta\leq \pi/2\:|\:m\theta \in \Z\pi\,\}$\\
		\hline
		$1$ & unknot & \raisebox{-3pt}{\begin{tikzpicture}
					\node (A) at (0,-.01) {$\bullet$};
					\node (B) at (4,-.01) {$\phantom{\bullet}$};
					\draw (0,0) -- (4,0);
				\end{tikzpicture}}\\
		$2$ & Hopf link & \raisebox{-3pt}{\begin{tikzpicture}
					\node (A) at (0,-.01) {$\bullet$};
					\node (B) at (4,-.01) {$\bullet$};
					\draw (0,0) -- (4,0);
				\end{tikzpicture}}\\
		$3$ & trefoil & \raisebox{-3pt}{\begin{tikzpicture}
					\node (A) at (0,-.01) {$\bullet$};
					\node (B) at (4,-.01) {$\phantom{\bullet}$};
					\node (C) at (8/3,-.01) {$\bullet$};
					\draw (0,0) -- (4,0);
				\end{tikzpicture}}\\
		$4$ & Solomon's knot & \raisebox{-3pt}{\begin{tikzpicture}
					\node (A) at (0,-.01) {$\bullet$};
					\node (B) at (4,-.01) {$\bullet$};
					\node (C) at (2,-.01) {$\bullet$};
					\draw (0,0) -- (4,0);
				\end{tikzpicture}}\\
		$5$ & cinquefoil & \raisebox{-3pt}{\begin{tikzpicture}
					\node (A) at (0,-.01) {$\bullet$};
					\node (B) at (4,-.01) {$\phantom{\bullet}$};
					\node (C) at (8/5,-.01) {$\bullet$};
					\node (D) at (16/5,-.01) {$\bullet$};
					\draw (0,0) -- (4,0);
				\end{tikzpicture}}
	\end{tabular}
	\end{minipage}
	\captionsetup{width=.8\linewidth}
	\caption{On the left are the orbits of the diagonal $\SU(N)$ action on $\CP^{N-1} \x \CP^{N-1}$. On the right are the orbits whose union is $\sr R_N(T(2,m))$ for $m = 1,2,3,4,5$.}
	\label{fig:repSpaces}
\end{figure}
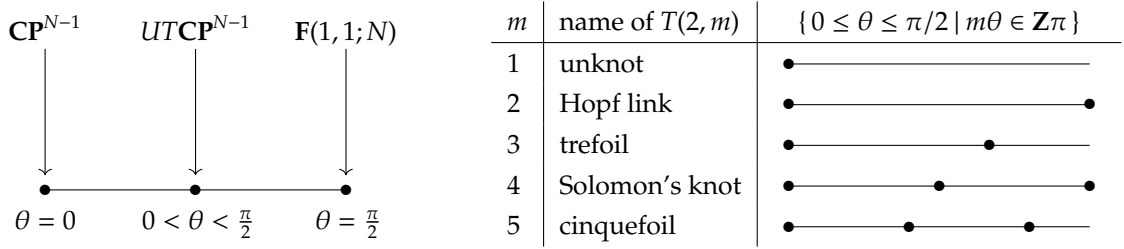

\begin{lem}\label{lem:principalAngle}
	The map from $\CP^{N-1} \x \CP^{N-1}$ to $[0,\pi/2]$ sending a pair of lines to the Hermitian angle between them gives an identification between the orbit space of the diagonal $\SU(N)$ action on $\CP^{N-1} \x \CP^{N-1}$ and $[0,\pi/2]$. 
\end{lem}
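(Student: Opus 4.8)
The plan is to show that the Hermitian angle induces a continuous bijection from the orbit space onto $[0,\pi/2]$ and then promote it to a homeomorphism by a compactness argument. For the preliminary steps: given nonzero $v,w \in \C^N$, the ratio $|v\cdot w|/(\|v\|\,\|w\|)$ is unchanged under rescaling $v$ and $w$, lies in $[0,1]$ by Cauchy--Schwarz, and is preserved when a common unitary (in particular an element of $\SU(N)$) is applied to both $v$ and $w$. Hence $\theta = \arccos\big(|v\cdot w|/(\|v\|\,\|w\|)\big)$ descends to a well-defined continuous map $q\colon \CP^{N-1}\x\CP^{N-1} \to [0,\pi/2]$ that is constant on the diagonal $\SU(N)$-orbits, and so factors through a continuous map $\bar q$ on the orbit space with its quotient topology. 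Surjectivity of $\bar q$ is immediate, since the pair of lines $\C\cdot e_1$ and $\C\cdot\big((\cos\theta)e_1 + (\sin\theta)e_2\big)$ realizes the angle $\theta$ for every $\theta \in [0,\pi/2]$.

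The substance of the argument is injectivity of $\bar q$, i.e. the claim that any pair of lines $(\C v, \C w)$ of Hermitian angle $\theta$ lies in the diagonal orbit of the standard pair above. It is cleanest to note first that the $\SU(N)$- and $\U(N)$-orbits on $\CP^{N-1}\x\CP^{N-1}$ coincide: every $g \in \U(N)$ equals $(\det g)^{1/N}\,\Id$ times an element of $\SU(N)$, and scalar matrices act trivially on $\CP^{N-1}$. So it suffices to produce a unitary $U$ with $U(\C v) = \C\cdot e_1$ and $U(\C w) = \C\cdot\big((\cos\theta)e_1 + (\sin\theta)e_2\big)$. Normalizing $v,w$ to unit vectors and replacing $w$ by a suitable unit multiple, we may assume $v\cdot w = \cos\theta \ge 0$; then $w - (\cos\theta)v$ is orthogonal to $v$ and has norm $\sin\theta$, so for $\theta \ne 0$ the vector $v' = (w - (\cos\theta)v)/\sin\theta$ is a unit vector perpendicular to $v$ with $w = (\cos\theta)v + (\sin\theta)v'$. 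Extending $\{v,v'\}$ (or just $\{v\}$, when $\theta = 0$) to an orthonormal basis of $\C^N$ and letting $U$ be the unitary carrying it to the standard basis in order gives $Uv = e_1$ and $Uw = (\cos\theta)e_1 + (\sin\theta)e_2$, as desired.

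Finally, $\bar q$ is a continuous bijection from the orbit space $(\CP^{N-1}\x\CP^{N-1})/\SU(N)$ onto $[0,\pi/2]$; the orbit space is compact, being a continuous image of the compact manifold $\CP^{N-1}\x\CP^{N-1}$, and $[0,\pi/2]$ is Hausdorff, so $\bar q$ is a homeomorphism. I expect the only point requiring genuine care to be the injectivity step: one must ensure the Gram--Schmidt normal form is actually realized by a group element, which is precisely why I would reduce from $\SU(N)$ to $\U(N)$ at the outset rather than carry a determinant constraint through the construction. The remaining points---well-definedness, continuity, surjectivity, and the compact-to-Hausdorff upgrade---are routine.
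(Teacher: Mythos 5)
Your proof is correct, and its core step is handled by a genuinely different route than the paper's. Both arguments reduce the lemma to showing that the diagonal action is transitive on pairs of lines with a fixed Hermitian angle, but where the paper first pushes any pair of lines into the span of $e_1,e_2$ (using transitivity of $\SU(N)$ on $\G(2,N)$) and then settles the resulting $N=2$ case by computing with the stabilizer of $[0:1]$ in $\SU(2)$ acting on circles $\{[\zeta:1] : |\zeta| \text{ fixed}\}$, you instead observe that $\U(N)$ and $\SU(N)$ have the same orbits on $\CP^{N-1}\x\CP^{N-1}$ (scalars act trivially) and then produce the required unitary in one shot by a Gram--Schmidt normal form: after a phase adjustment making $v\cdot w = \cos\theta \ge 0$, the pair $(v,w)$ is carried to the standard pair $(e_1, (\cos\theta)e_1+(\sin\theta)e_2)$ by the unitary sending your orthonormal frame to the standard one. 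This is cleaner and avoids both the Grassmannian transitivity input and the $\CP^1$ coordinate computation; the paper's version, in exchange, supplies the geometric picture ($\SU(2)\to\SO(3)$, Fubini--Study distance, circles about a basepoint) that it reuses in Lemma~2.5 and Proposition~2.6. You are also more careful than the paper about the topological content of the word ``identification'': your compact-to-Hausdorff upgrade of the continuous bijection is a point the paper leaves implicit. No gaps.
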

\begin{proof}
	The Hermitian angle is invariant under the diagonal action because \[
		\frac{|Uv \cdot Uw|}{\|Uv\|\,\|Uw\|} = \frac{|v\cdot w|}{\|v\|\,\|w\|}
	\]for any $U \in \SU(N)$. It suffices to show that $\SU(N)$ acts transitively on pairs of lines having a fixed Hermitian angle between them. If $e_1,\ldots,e_N$ is the standard basis for $\C^N$, then any pair of lines may be mapped into the span of $e_1$ and $e_2$ by an element of $\SU(N)$. This follows from the fact that $\SU(N)$ acts transitively on the Grassmannian $\G(2,N)$ of $2$-planes in $\C^N$. Using the inclusion $\SU(2) \x \Id_{N-2} \subset \SU(N)$ by block diagonal matrices, we have reduced to the case that $N = 2$. 

	First, we describe the geometric picture. Given two points in $\CP^1$, the Hermitian angle between them is precisely the distance between them in the Fubini--Study metric, which is the round metric on $S^2$ with radius 1/2. Furthermore, $\SU(2)$ acts on $\CP^1$ through isometries, realizing $\SU(2)$ as the double cover of $\SO(3)$. The quotient of $S^2 \x S^2$ by the diagonal $\SO(3)$ action is clearly a closed interval given by the distance between the points, as claimed. 

	With the picture in place, it is easy to verify the result in coordinates and formulas. We think of $[0:1]$ as a basepoint, and consider the set of points with a fixed Hermitian angle $\theta$ from $[0:1]$. If $\theta = 0$, then the set is the singleton $[0:1]$, while if $\theta = \pi/2$, then the set is the singleton $[1:0]$. Any other point may be described in homogeneous coordinates as $[\zeta:1]$ with $\zeta \neq 0$. The Hermitian angle $\theta$ between $[\zeta:1]$ and $[0:1]$ is given by \[
		\cos\theta = \frac{1}{\sqrt{1 + |\zeta|^2}}
	\]so the points with a fixed Hermitian angle $0 < \theta < \pi/2$ from $[0,1]$ form a circle. They are the points $[\zeta:1]$ with a fixed value of $|\zeta| > 0$. It suffices to show that the matrices in $\SU(2)$ that fix $[0:1]$ act transitively on each of these circles. The matrices fixing $[0:1]$ are precisely the diagonal matrices $\mathrm{diag}(\omega,\omega^{-1})$ with $\omega \in \U(1)$. The matrix $\mathrm{diag}(\omega,\omega^{-1})$ sends $[\zeta:1]$ to $[\omega^2\zeta:1]$, which is transitive as required. 
\end{proof}

\begin{lem}\label{lem:orbitCalculation}
	The orbit of the diagonal $\SU(N)$ action on $\CP^{N-1} \x \CP^{N-1}$ consisting of pairs of lines with a fixed Hermitian angle $\theta \in [0,\pi/2]$ between them is given by \[
		\text{Orbit corresponding to }\theta = \begin{cases}
			\CP^{N-1} & \theta = 0\\
			UT\CP^{N-1} & 0 < \theta < \pi/2\\
			\F(1,1;N) & \theta = \pi/2
		\end{cases}
	\]
\end{lem}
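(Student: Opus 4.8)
The plan is to treat the three ranges of $\theta$ separately, according to whether $\cos\theta$ equals $1$, lies strictly between $0$ and $1$, or equals $0$. The two endpoint cases are essentially immediate from the geometric meaning of the Hermitian angle. For $\theta=0$, equality $|v\cdot w|=\|v\|\,\|w\|$ in the Cauchy--Schwarz inequality forces $v$ and $w$ to be proportional, so a pair of lines at Hermitian angle $0$ has the form $(\ell,\ell)$; hence the orbit is the diagonal of $\CP^{N-1}\x\CP^{N-1}$, which is a copy of $\CP^{N-1}$. For $\theta=\pi/2$, the condition $v\cdot w=0$ says exactly that $\ell$ and $\ell'$ are orthogonal, so the orbit is the space of ordered pairs of orthogonal lines in $\C^N$, which is $\F(1,1;N)$ by definition.

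For the interior case $0<\theta<\pi/2$ I would exhibit an explicit $\SU(N)$-equivariant diffeomorphism from the orbit to the unit sphere bundle of $T\CP^{N-1}$. Let $\gamma$ denote the tautological line bundle over $\CP^{N-1}$ and $\gamma^\perp$ its orthogonal complement, so that $T\CP^{N-1}\cong\Hom(\gamma,\gamma^\perp)$ by the standard description of the tangent bundle of projective space. Given a pair $(\ell,\ell')$ at Hermitian angle $\theta$, write $P_\ell$ and $P_{\ell^\perp}$ for the orthogonal projections of $\C^N$ onto $\ell$ and onto $\ell^\perp$. Since $\theta\neq\pi/2$, the restriction $P_\ell|_{\ell'}$ is a linear isomorphism onto $\ell$, so there is a unique $\C$-linear map $\phi\colon\ell\to\ell^\perp$, determined by $\phi(P_\ell w)=P_{\ell^\perp}w$ for $0\neq w\in\ell'$, whose graph $\{x+\phi(x):x\in\ell\}$ is $\ell'$. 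Evaluating on a unit vector of $\ell$ shows that $\phi$ has operator norm $\tan\theta$, independent of the pair, so $(\ell,\ell')\mapsto(\ell,\phi)$ carries the orbit into the radius-$\tan\theta$ sphere bundle of $\Hom(\gamma,\gamma^\perp)$. The inverse sends $(\ell,\phi)$ to $\ell$ together with the graph of $\phi$, and a direct computation confirms that this graph makes Hermitian angle $\arctan\|\phi\|$ with $\ell$, so the two assignments are mutually inverse; both are algebraic, hence smooth, and both commute with the $\SU(N)$-action, since an element of $\SU(N)$ carries graphs to graphs and conjugates $\phi$ accordingly. Because the sphere bundle of a vector bundle is, up to diffeomorphism, independent of the metric and radius used to form it, the orbit is diffeomorphic to $UT\CP^{N-1}$.

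I do not expect a genuine obstacle here; the points needing care are verifying that $\phi$ is well defined independently of the choice of spanning vector $w$ and of the unit vector of $\ell$, that its operator norm is exactly $\tan\theta$ (so that a single sphere bundle is hit, not a varying family), and the bookkeeping of the equivariance and of the identification $\Hom(\gamma,\gamma^\perp)\cong T\CP^{N-1}$. An alternative that sidesteps the bundle-metric bookkeeping is to compute the stabilizer in $\SU(N)$ of the base pair $(\C e_1,\ \C(\cos\theta\,e_1+\sin\theta\,e_2))$ directly: it is the subgroup $H=\{\mathrm{diag}(\lambda,\lambda,B):\lambda\in\U(1),\ B\in\U(N-2),\ \lambda^2\det B=1\}$, which is independent of $\theta$; the very same subgroup is the stabilizer of a unit tangent vector of $\CP^{N-1}$ under the induced $\SU(N)$-action, so the orbit and $UT\CP^{N-1}$ are identified with the same homogeneous space $\SU(N)/H$.
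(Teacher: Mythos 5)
Your proof is correct, and the interior case $0<\theta<\pi/2$ takes a genuinely different route from the paper. The paper identifies the orbit with $UT\CP^{N-1}$ via Riemannian geometry: the Fubini--Study exponential map at $\Lambda_A$ carries the radius-$\theta$ sphere in $T_{\Lambda_A}\CP^{N-1}$ onto the set of lines at Hermitian angle $\theta$ from $\Lambda_A$, and this picture is then confirmed by a computation in homogeneous coordinates showing that the fibers are $(2N-3)$-spheres. You instead work purely linear-algebraically: a line $\ell'$ not orthogonal to $\ell$ is the graph of a unique $\phi\in\Hom(\ell,\ell^\perp)$, the Hermitian-angle condition pins down $\|\phi\|=\tan\theta$, and the standard isomorphism $T\CP^{N-1}\cong\Hom(\gamma,\gamma^\perp)$ finishes the job. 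Your version buys an explicit, metric-free diffeomorphism and makes the step ``this sphere bundle is the unit \emph{tangent} bundle'' completely transparent, which is the one point the paper leaves at the level of ``clear from our description''; the paper's version buys the geodesic-distance interpretation of the Hermitian angle, which it reuses later (e.g.\ in the proof of Proposition~\ref{prop:eulerClass}, where the circle bundle $UT\CP^{N-1}\to\F(1,1;N)$ is described via antipodal points on projective lines). Your stabilizer computation $H=\{\mathrm{diag}(\lambda,\lambda,B)\}$ is also correct and gives a clean second confirmation via the homogeneous-space description $\SU(N)/H$, provided you note (as is standard) that $\SU(N)$ acts transitively on $UT\CP^{N-1}$.
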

\begin{proof}
	If the Hermitian angle between lines $\Lambda_A$ and $\Lambda_B$ is zero, then $\Lambda_A = \Lambda_B$, and the orbit is clearly a copy of $\CP^{N-1}$. The Hermitian angle between $\Lambda_A$ and $\Lambda_B$ is $\pi/2$ if and only if they are orthogonal, so their orbit is $\F(1,1;N)$ by definition of $\F(1,1;N)$. 

	To describe the orbits corresponding to $0 < \theta < \pi/2$, we again first describe the geometric picture. Given two points in $\CP^{N-1}$, the Hermitian angle between them is precisely the distance between them in the Fubini--Study metric. The exponential map at a point $\Lambda_A \in \CP^{N-1}$ is a diffeomorphism when restricted to the open ball of radius $\pi/2$, and it is surjective on the closed ball of radius $\pi/2$. The image of the sphere of radius $0 <\theta\leq\pi/2$ under the exponential is precisely the set of points $\Lambda_B \in \CP^{N-1}$ whose Hermitian angle from $\Lambda_A$ is $\theta$. Thus the exponential map, precomposed with rescaling, allows us to identity $UT\CP^{N-1}$ with the orbit corresponding $0 < \theta < \pi/2$.

	We verify the picture in coordinates and formulas. Think of $[0:\cdots:0:1]$ as a basepoint, and note that the points $[z_1:\cdots:z_N]$ with $z_N = 0$ all have Hermitian angle $\pi/2$ from $[0:\cdots:0:1]$. The Hermitian angle $\theta$ between $[0:\cdots:0:1]$ and $[\zeta_1:\cdots:\zeta_{N-1}:1]$ is given by \[
		\cos\theta = \frac{1}{\sqrt{1 + |\zeta_1|^2 + \cdots + |\zeta_{N-1}|^2}}
	\]so the points with a fixed Hermitian angle $0 < \theta < \pi/2$ from $[0:\cdots:0:1]$ form a $(2N-3)$-sphere consisting of those points $[\zeta_1:\cdots:\zeta_{N-1}:1]$ with a fixed value of $|\zeta_1|^2 + \cdots + |\zeta_{N-1}|^2$. Thus the orbit corresponding to $\theta$ is a $(2N-3)$-sphere bundle over $\CP^{N-1}$, and it clear from our description that it is the unit tangent bundle. 
\end{proof}

We now explain how $UT\CP^{N-1}$ is a circle bundle over $\F(1,1;N)$, and compute its Euler class. First, note that there are two tautological line bundles $\sr A,\sr B$ over $\F(1,1;N)$. The fiber of $\sr A$ over the point $(\Lambda_A,\Lambda_B) \in \F(1,1;N)$ is the $1$-dimensional vector space $\Lambda_A$, while the fiber of $\sr B$ over $(\Lambda_A,\Lambda_B)$ is $\Lambda_B$. 

\begin{prop}\label{prop:eulerClass}
	The space $UT\CP^{N-1}$ is a circle bundle over $\F(1,1;N)$ whose Euler class $e \in H^2(\F(1,1;N))$ is \[
		e = c_1(\sr A) - c_1(\sr B).
	\]
\end{prop}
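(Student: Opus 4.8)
The plan is to exhibit an explicit bundle projection $\pi\colon UT\CP^{N-1}\to\F(1,1;N)$, recognize its fibers as unit circles inside the fibers of a Hermitian line bundle assembled from the tautological bundles $\sr A$ and $\sr B$, and then invoke the standard fact that the Euler class of the unit circle bundle of a Hermitian line bundle $\sr L$ equals $c_1(\sr L)$. Concretely, I would use Lemma~\ref{lem:orbitCalculation} to replace $UT\CP^{N-1}$ by the orbit $\sr O_\theta\subset\CP^{N-1}\x\CP^{N-1}$ consisting of pairs of lines at a fixed Hermitian angle $\theta$ with $0<\theta<\pi/2$, and define
\[
	\pi(\Lambda_A,\Lambda_B)=\bigl(\Lambda_A,\ \Lambda_A^{\perp}\cap(\Lambda_A\oplus\Lambda_B)\bigr)\in\F(1,1;N).
\]
The hypothesis $0<\theta<\pi/2$ ensures that $\Lambda_A\oplus\Lambda_B$ is genuinely two-dimensional and meets $\Lambda_A^{\perp}$ in a line, so $\pi$ is well defined; it is visibly $\SU(N)$-equivariant, and since $\SU(N)$ acts transitively on both the source and the target, $\pi$ is automatically a locally trivial fiber bundle.

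Next I would identify the fiber. Over a point $(\Lambda_A,\Lambda_B')\in\F(1,1;N)$, the preimage is the set of lines $\Lambda_B\subset\Lambda_A\oplus\Lambda_B'$ making Hermitian angle $\theta$ with $\Lambda_A$; each such $\Lambda_B$ is the graph of a unique linear isomorphism $f\colon\Lambda_A\to\Lambda_B'$, and the angle condition is precisely $\|f\|=\tan\theta$. Thus the fiber is the radius-$\tan\theta$ circle in the fiber of $\Hom(\sr A,\sr B)=\sr A^{*}\otimes\sr B$ over $(\Lambda_A,\Lambda_B')$, and after rescaling, $\pi$ is the unit circle bundle of the Hermitian line bundle $\Hom(\sr A,\sr B)$ — or, via $f\mapsto f^{-1}$, of the dual line bundle $\Hom(\sr B,\sr A)=\sr B^{*}\otimes\sr A$. (Equivalently, and perhaps more cleanly: $\F(1,1;N)$ is the fibrewise projectivization of the tangent bundle $T\CP^{N-1}=\Hom(\gamma,\gamma^{\perp})$ of $\CP^{N-1}$, and under this identification the unit sphere bundle $UT\CP^{N-1}=S(T\CP^{N-1})$ becomes the unit circle bundle of the relative tautological line bundle, which one then unwinds as $\sr A^{*}\otimes\sr B$.)

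Finally, since the Euler class of the unit circle bundle of a Hermitian line bundle $\sr L$ is $c_1(\sr L)$, orienting the circle fibers so that this bundle is $\Hom(\sr B,\sr A)$ gives
\[
	e=c_1(\sr B^{*}\otimes\sr A)=c_1(\sr A)-c_1(\sr B),
\]
as claimed. The opposite orientation would yield $c_1(\sr B)-c_1(\sr A)$, which is harmless for the intended application since only $\ker$ and $\coker$ of cup product with $e$ enter the Gysin argument. I do not expect a serious obstacle here; the content is bookkeeping — checking that the "graph" description of $\Lambda_B$ genuinely equips $\pi$ with a complex (not merely oriented) line-bundle structure, coherently over $\F(1,1;N)$, and keeping the direction of the $\Hom$, hence the sign of $e$, consistent throughout. (Once $e$ is in hand, it can of course be made fully explicit using the standard presentation of $H^{*}(\F(1,1;N))$.)
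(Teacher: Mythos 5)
Your proposal is correct, and the bundle map you write down is literally the one in the paper: the paper sends $(\Lambda_A,\Lambda_B)$ to $(\Lambda_A,\Lambda_C)$ with $\Lambda_C$ the antipodal point of $\Lambda_A$ on the projective line through $\Lambda_A$ and $\Lambda_B$, which is exactly your $\Lambda_A^{\perp}\cap(\Lambda_A\oplus\Lambda_B)$. Where you genuinely diverge is in the Euler class computation, and your route is shorter. The paper realizes the circle bundle as the unit circle bundle of the vertical tangent bundle $V$ of the fibration $\F(1,1;N)\to\G(2,N)$ and then computes $c_1(V)=c_1(T\F(1,1;N))-\pi^*c_1(T\G(2,N))$, which requires the splittings $T\F(1,1;N)\cong\Hom(\sr A,\sr B)\oplus\Hom(\sr A,\sr C)\oplus\Hom(\sr B,\sr C)$ and $T\G(2,N)\cong\Hom(S,Q)$ together with a page of Chern class bookkeeping. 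Your graph description identifies the relevant line bundle as $\Hom(\sr A,\sr B)=\sr A^{*}\otimes\sr B$ directly --- this is precisely the summand $V$ of $T\F(1,1;N)$ that survives after the paper subtracts off $\pi^{*}T\G(2,N)$, so you are computing the same Chern class but without ever introducing the Grassmannian. The one point worth being explicit about, which you correctly flag as bookkeeping, is the angle computation $\tan\theta=\|f\|$ identifying the fiber with a round circle in $\Hom(\Lambda_A,\Lambda_B')$; this is what makes the circle bundle the unit circle bundle of a \emph{complex} line bundle rather than merely an oriented one. Your treatment of the sign also matches the paper, which likewise reverses the natural orientation on the circle fibers ``for cosmetic reasons'' to land on $e=c_1(\sr A)-c_1(\sr B)$ rather than its negative, and as you note the sign is immaterial for the Gysin argument.
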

\begin{proof}
	View $UT\CP^{N-1}$ as the space of pairs $(\Lambda,v)$ where $\Lambda \in \CP^{N-1}$ and $v$ is a tangent vector at $\Lambda$ of length $\pi/2$ in the Fubini--Study metric. Then the exponential $\exp_\Lambda(v)$ is a point in $\CP^{N-1}$ whose Hermitian angle from $\Lambda$ is $\pi/2$. The bundle map $UT\CP^{N-1} \to \F(1,1;N)$ is given by $(\Lambda,v) \mapsto (\Lambda,\exp_\Lambda(v))$. Both $UT\CP^{N-1}$ and $\F(1,1;N)$ are fiber bundles over $\CP^{N-1}$, with fibers $S^{2N-3}$ and $\CP^{N-2}$, respectively. The map $UT\CP^{N-1} \to \F(1,1;N)$ sends fibers to fibers, and the map $S^{2N-3} \to \CP^{N-2}$ is just the standard quotient map with circle fiber generalizing the Hopf fibration. Alternatively, we may view $UT\CP^{N-1}$ as the orbit in $\CP^{N-1} \x \CP^{N-1}$ corresponding to a fixed Hermitian angle $0 < \theta < \pi/2$. Given a point $(\Lambda_A,\Lambda_B)$ in this orbit, consider the unique projective line $\ell$ in $\CP^{N-1}$ through these points. The map $UT\CP^{N-1} \to \F(1,1;N)$ can also be defined by sending $(\Lambda_A,\Lambda_B)$ to $(\Lambda_A,\Lambda_C)$ where $\Lambda_C$ is the antipodal point of $\Lambda_A$ in $\ell$. 

	We compute the Euler class of this circle bundle. First, consider the map $\pi\colon\F(1,1;N) \to \G(2,N)$ sending a pair of orthogonal lines to their $2$-dimensional span in $\C^N$. This map is a fiber bundle with fiber $\CP^1$. Let $V$ be the vertical tangent bundle of $\F(1,1;N)$ with respect to $\pi$ so that \[
		T\F(1,1;N) = V \oplus \pi^*(T\G(2,N)).
	\]The bundle $V$ is a complex line bundle over $\F(1,1;N)$ whose unit circle bundle is $UT\CP^{N-1}$. To see this, fix $0 < \theta < \pi/2$ and consider the space of quadruples $(\Lambda_A,\Lambda_B,\Lambda_C,\Lambda_D)$ of points in $\CP^{N-1}$ where \begin{itemize}[noitemsep]
		\item the Hermitian angle between $\Lambda_A$ and $\Lambda_B$ is $\theta$,
		\item $\Lambda_C$ is the antipodal point of $\Lambda_A$ on the unique projective line $\ell$ through $\Lambda_A$ and $\Lambda_B$,
		\item $\Lambda_D$ is the antipodal point of $\Lambda_B$ on $\ell$.
	\end{itemize}Clearly, the pair $(\Lambda_A,\Lambda_B)$ determines the quadruple so this space can be identified with $UT\CP^{N-1}$. On the other hand, we can view this quadruple as a pair $((\Lambda_A,\Lambda_C),(\Lambda_B,\Lambda_D))$ of points in $\F(1,1;N)$. The point $(\Lambda_B,\Lambda_D)$ can be identified with the exponential of a vector in the fiber of $V$ over $(\Lambda_A,\Lambda_C)$ of length $\theta$ under a natural generalization of the Fubini--Study metric to $\F(1,1;N)$. 

	For cosmetic reasons, we orient the circle bundle $UT\CP^{N-1}$ to be opposite that of the natural orientation on the unit circle bundle of $V$. The Euler class of $UT\CP^{N-1}$ is therefore the negative of the first Chern class of $V$, so \[
		-e = c_1(V) = c_1(T\F(1,1;N)) - \pi^*(c_1(T\G(2,N))).
	\]Let $\sr C \to \F(1,1;N)$ be the rank $N-2$ vector bundle whose fiber over $(\Lambda_A,\Lambda_B)$ is the orthogonal complement of $\Lambda_A\oplus\Lambda_B$, and note that $\sr A \oplus \sr B \oplus \sr C$ is the trivial rank $N$ bundle. It follows that $c_1(\sr C) = - c_1(\sr A) - c_1(\sr B)$. Let $S,Q$ be the tautological vector bundles over $\G(2,N)$ so that their fibers over the point $\Lambda \in \G(2,N)$ are $\Lambda$ and $\Lambda^\perp$ respectively. Note that $\pi^*(S) = \sr A \oplus \sr B$ and $\pi^*(Q) = \sr C$. Recalling that $T(\G(2,N)) \cong \Hom(S,Q) \cong S^* \otimes Q$ and $c_1(V\otimes W) = \rk(V)c_1(W) + \rk(W)c_1(V)$, we have \begin{align*}
		\pi^*(c_1(T\G(2,N))) &= \pi^*(-\rk(Q)c_1(S) + \rk(S)c_1(Q))\\
		&= -(N-2)(c_1(\sr A) + c_1(\sr B)) + 2(-c_1(\sr A) - c_1(\sr B))\\
		&= -N(c_1(\sr A) + c_1(\sr B)).
	\end{align*}Since $T\F(1,1;N) \cong \Hom(\sr A,\sr B) \oplus \Hom(\sr A,\sr C) \oplus \Hom(\sr B,\sr C)$ \cite{MR431194}, we find that \begin{align*}
		c_1(T\F(1,1;N)) &= (-c_1(\sr A) + c_1(\sr B)) + (-(N-2)c_1(\sr A) - c_1(\sr A) - c_1(\sr B)) + (-(N-2)c_1(\sr B) - c_1(\sr A) - c_1(\sr B))\\
		&= (-N-1) c_1(\sr A) + (- N + 1)c_1(\sr B).
	\end{align*}Thus, we have \[
		-e = (-N-1)c_1(\sr A) + (-N+1)c_1(\sr B) + N (c_1(\sr A) + c_1(\sr B)) = c_1(\sr B) - c_1(\sr A)
	\]as claimed.
\end{proof}

\section{\texorpdfstring{$\sl(N)$}{sl(N)} homology}\label{sec:sl(N)homology}

We describe the construction of $\sl(N)$ homology, drawing from the exposition in \cite[section 2]{wang2021sln}. 

\begin{df}
	A \textit{web $W$ (with labels $1$ and $2$)} is a trivalent graph embedded in the plane where multiple edges and circles without vertices are permitted. Each edge is oriented and labeled $1$ or $2$, and the orientations and labels of the three edges incident to each vertex must match one of the two models given in Figure~\ref{fig:mergeAndSplitVertices}.
\end{df}

\begin{figure}[!ht]
	\centering
	\labellist
	\pinlabel $2$ at 130 20
	\pinlabel $1$ at 15 100
	\pinlabel $1$ at 185 100
	\endlabellist
	\includegraphics[width=.08\textwidth]{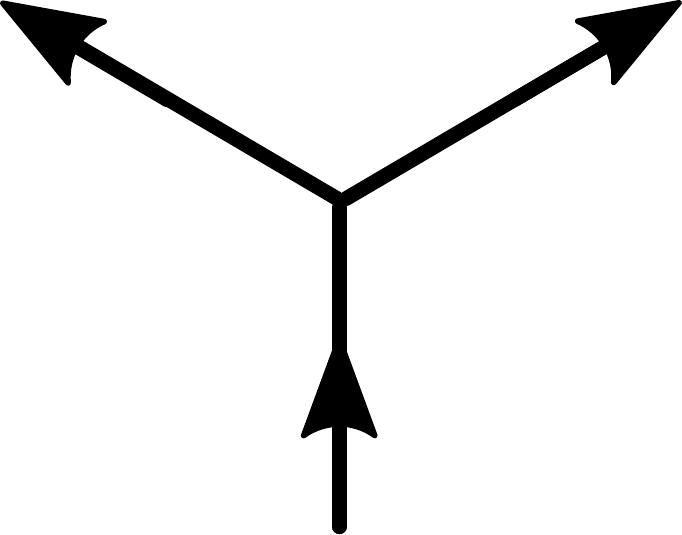}\hspace{30pt}
	\labellist
	\pinlabel $2$ at 130 135
	\pinlabel $1$ at 15 60
	\pinlabel $1$ at 185 60
	\endlabellist
	\includegraphics[width=.08\textwidth]{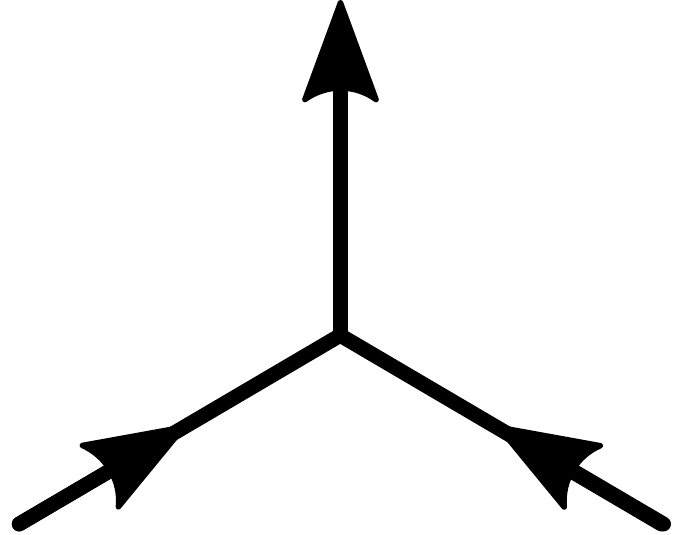}
	\captionsetup{width=.8\linewidth}
	\caption{Orientations and labels of the edges incident to a vertex in a web.}
	\label{fig:mergeAndSplitVertices}
\end{figure}

Associated to a web $W$ and integer $N \ge 2$ is a polynomial $P_N(W) \in \Z[q,q^{-1}]$ with nonnegative coefficients called its MOY polynomial \cite{MR1659228}. This polynomial is sometimes denoted $\langle W\rangle$, though it does depend on $N$. There are local relations that the polynomial satisfies which also determine it, which we summarize in the following result (see for example \cite[Figure 3]{MR2491657}). The MOY polynomial is multiplicative under disjoint union, so in particular, the MOY polynomial of the empty web is $1$. 

\theoremstyle{plain}
\newtheorem*{MOYCalc}{MOY Calculus}

\begin{MOYCalc}
	The MOY polynomial of a web is determined by the local relations in Figure~\ref{fig:MOYcalculus}. 
\end{MOYCalc}

Figure~\ref{fig:MOYcalculus} uses the following notation. For integers $k,N \ge 0$, the polynomials $[N],\qbinom{N}{k} \in \Z[q,q^{-1}]$ are \[
	[N] = \frac{q^N-q^{-N}}{q-q^{-1}} = q^{N-1} + q^{N-3} + \cdots + q^{-N+1} \qquad\qquad \qbinom{N}{k} = \begin{cases}
		\displaystyle\frac{[N]!}{[k]![N-k]!} & 0 \leq k \leq N\\
		0 & \text{else}
	\end{cases}
\]where $[k]! = [k]\cdots[2][1]$. These polynomials are just the Poincar\'e polynomials of $\CP^{k-1}$ and the complex Grassmannian $\G(k,n)$, shifted downwards to be invariant under $q\mapsto q^{-1}$. After a similar shift, the Poincar\'e polynomial of $\F(1,1;N)$ is $[N][N-1]$. This can be seen from the fact that the Serre spectral sequence of the bundle $\CP^{N-2} \to \F(1,1;N) \to \CP^{N-1}$ collapses on the second page for lacunary reasons. 

\begin{figure}[!ht]
	\centering
	\[
		\Big\langle\,\begin{gathered}
			\labellist
			\pinlabel $1$ at 110 100
			\endlabellist
			\includegraphics[width=.04\textwidth]{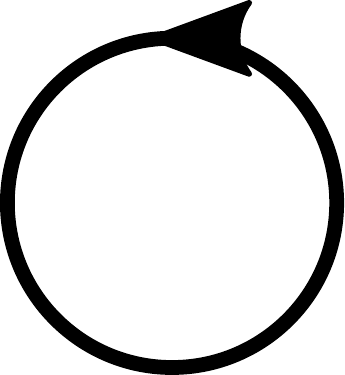}
		\end{gathered}\:\:\:\Big\rangle = \Big\langle\,\begin{gathered}
			\labellist
			\pinlabel $1$ at 110 100
			\endlabellist
			\includegraphics[width=.04\textwidth]{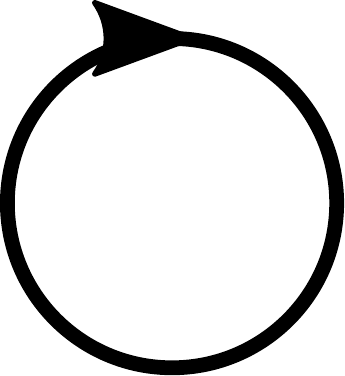}
		\end{gathered}\:\:\:\Big\rangle = [N] \:\Big\langle \qquad\Big\rangle \qquad\qquad \Big\langle\,\begin{gathered}
			\labellist
			\pinlabel $2$ at 110 100
			\endlabellist
			\includegraphics[width=.04\textwidth]{counterclockwiseCircle}
		\end{gathered}\:\:\:\Big\rangle = \Big\langle\,\begin{gathered}
			\labellist
			\pinlabel $2$ at 110 100
			\endlabellist
			\includegraphics[width=.04\textwidth]{clockwiseCircle}
		\end{gathered}\:\:\:\Big\rangle = \qbinom{N}{2}\:\Big\langle \qquad\Big\rangle
	\]\[
		\Bigg\langle \:\begin{gathered}
			\labellist
			\pinlabel $2$ at 90 20
			\pinlabel $2$ at 90 220
			\endlabellist
			\includegraphics[width=.045\textwidth]{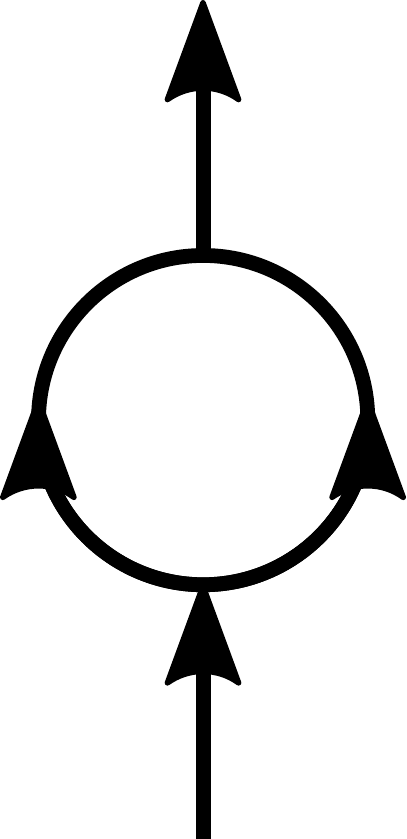}
		\end{gathered} \:\:\:\Bigg\rangle = [2]\: \Bigg\langle\:\: \begin{gathered}
			\labellist
			\pinlabel $2$ at 40 20
			\endlabellist
			\includegraphics[width=.0085\textwidth]{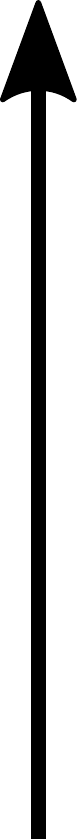}
		\end{gathered} \:\:\:\Bigg\rangle \qquad \qquad\qquad \Bigg\langle \:\begin{gathered}
			\labellist
			\pinlabel $2$ at 140 120
			\endlabellist
			\includegraphics[width=.045\textwidth]{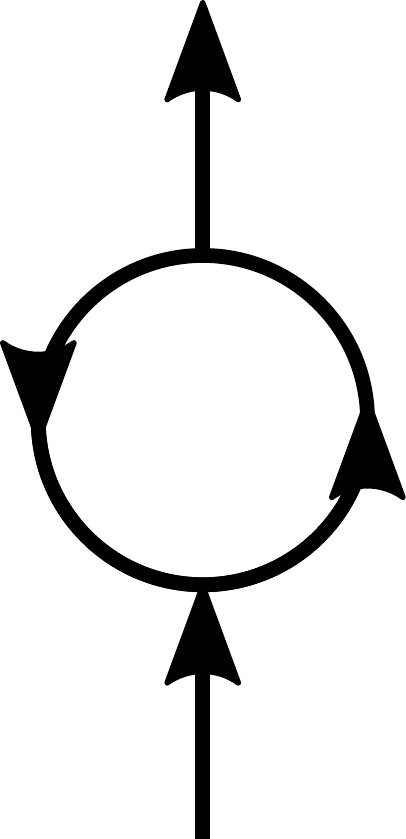}
		\end{gathered} \:\:\:\Bigg\rangle = [N-1]\: \Bigg\langle\:\: \begin{gathered}
			\includegraphics[width=.0085\textwidth]{verticalMOY}
		\end{gathered} \:\:\:\Bigg\rangle = \Bigg\langle \:\:\:\begin{gathered}
			\labellist
			\pinlabel $2$ at -20 120
			\endlabellist
			\includegraphics[width=.045\textwidth]{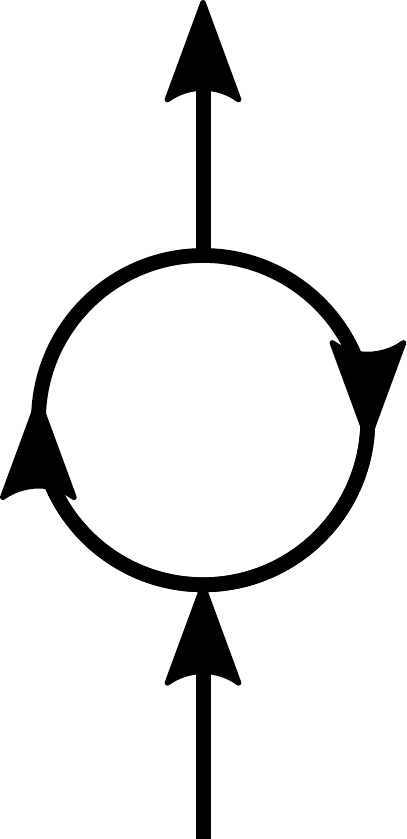}
		\end{gathered} \:\Bigg\rangle
	\]\[
		\Bigg\langle\: \begin{gathered}
			\labellist
			\pinlabel $2$ at 20 100
			\pinlabel $2$ at 185 100
			\endlabellist
			\includegraphics[width=.08\textwidth]{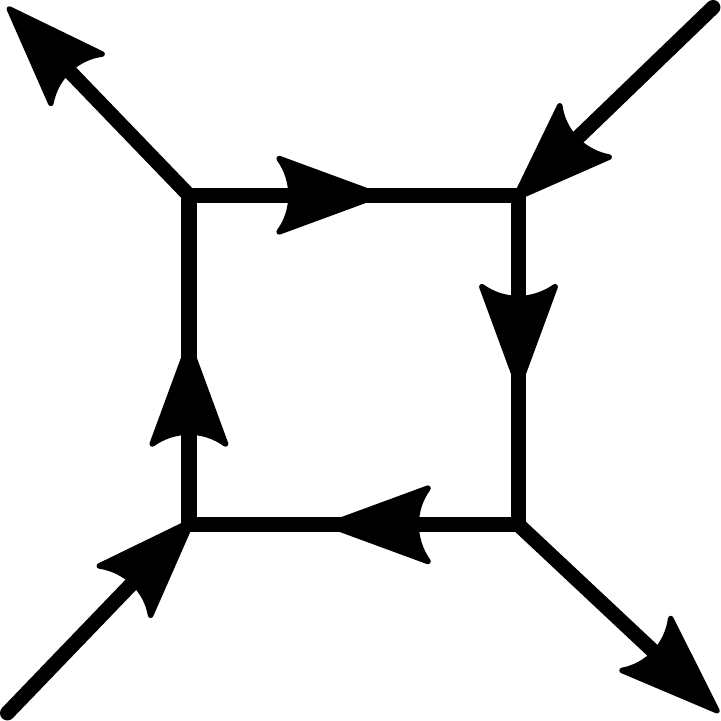}
		\end{gathered} \:\:\Bigg\rangle = \Bigg\langle\: \begin{gathered}
			\includegraphics[width=.08\textwidth]{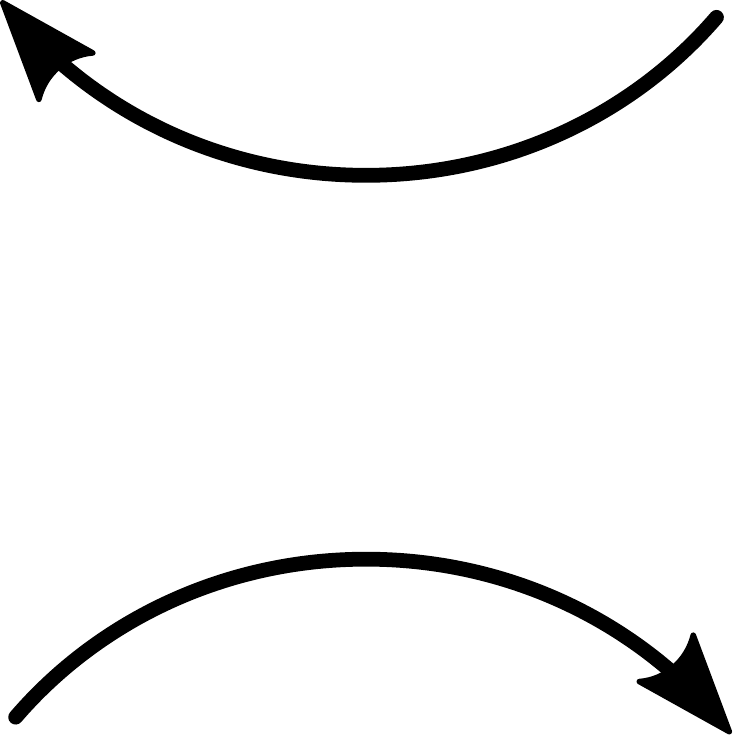}
		\end{gathered} \:\:\Bigg\rangle + [N-2] \:\Bigg\langle \: \begin{gathered}
			\includegraphics[width=.08\textwidth]{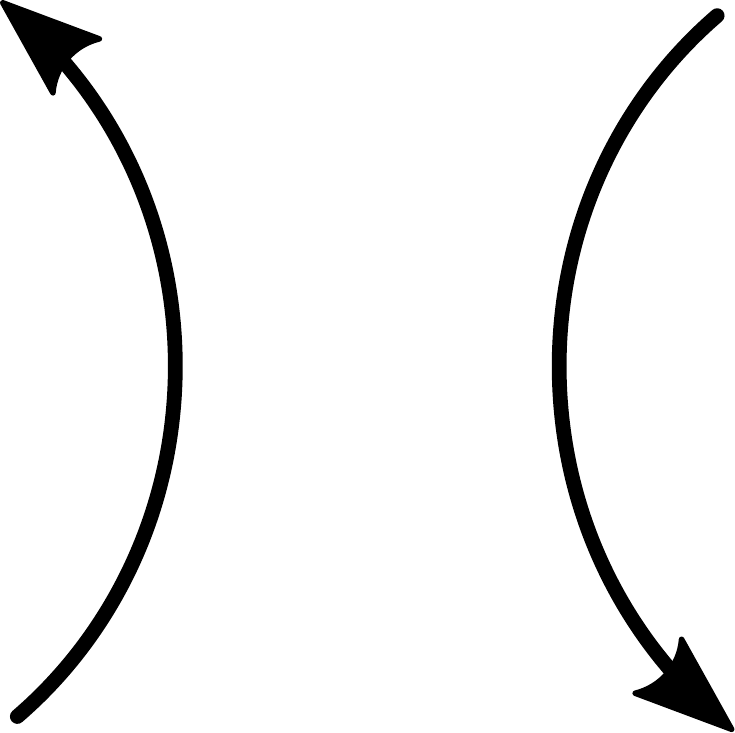}
		\end{gathered}\:\:\Bigg\rangle
	\]\[
		\Bigg\langle\:\:\begin{gathered}
			\labellist
			\pinlabel $2$ at 110 140
			\pinlabel $2$ at 110 260
			\pinlabel $2$ at 300 190
			\endlabellist
			\includegraphics[width=.12\textwidth]{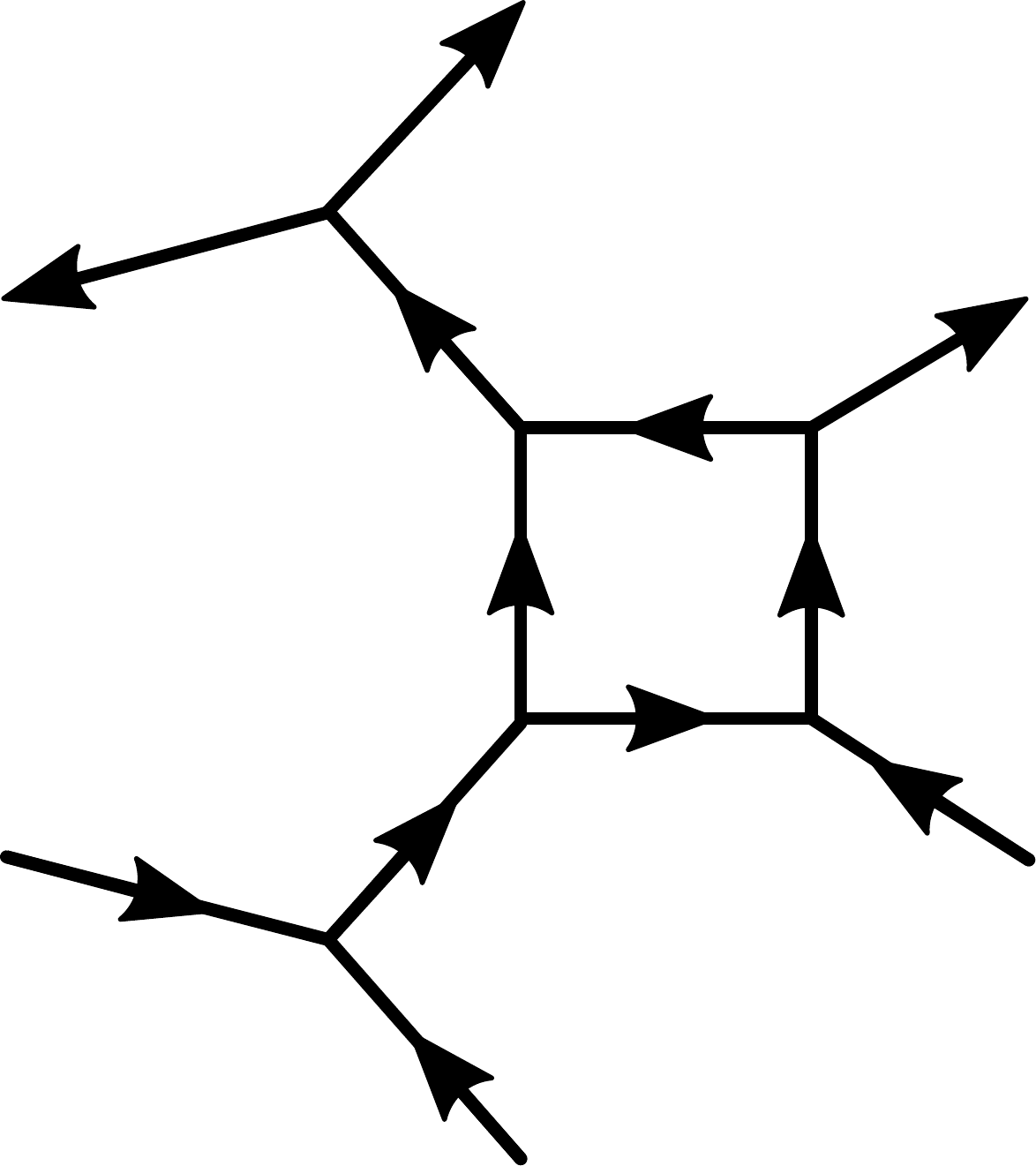}
		\end{gathered}\:\:\:\Bigg\rangle + \Bigg\langle\:\:\begin{gathered}
			\labellist
			\pinlabel $2$ at 300 190
			\endlabellist
			\includegraphics[width=.12\textwidth]{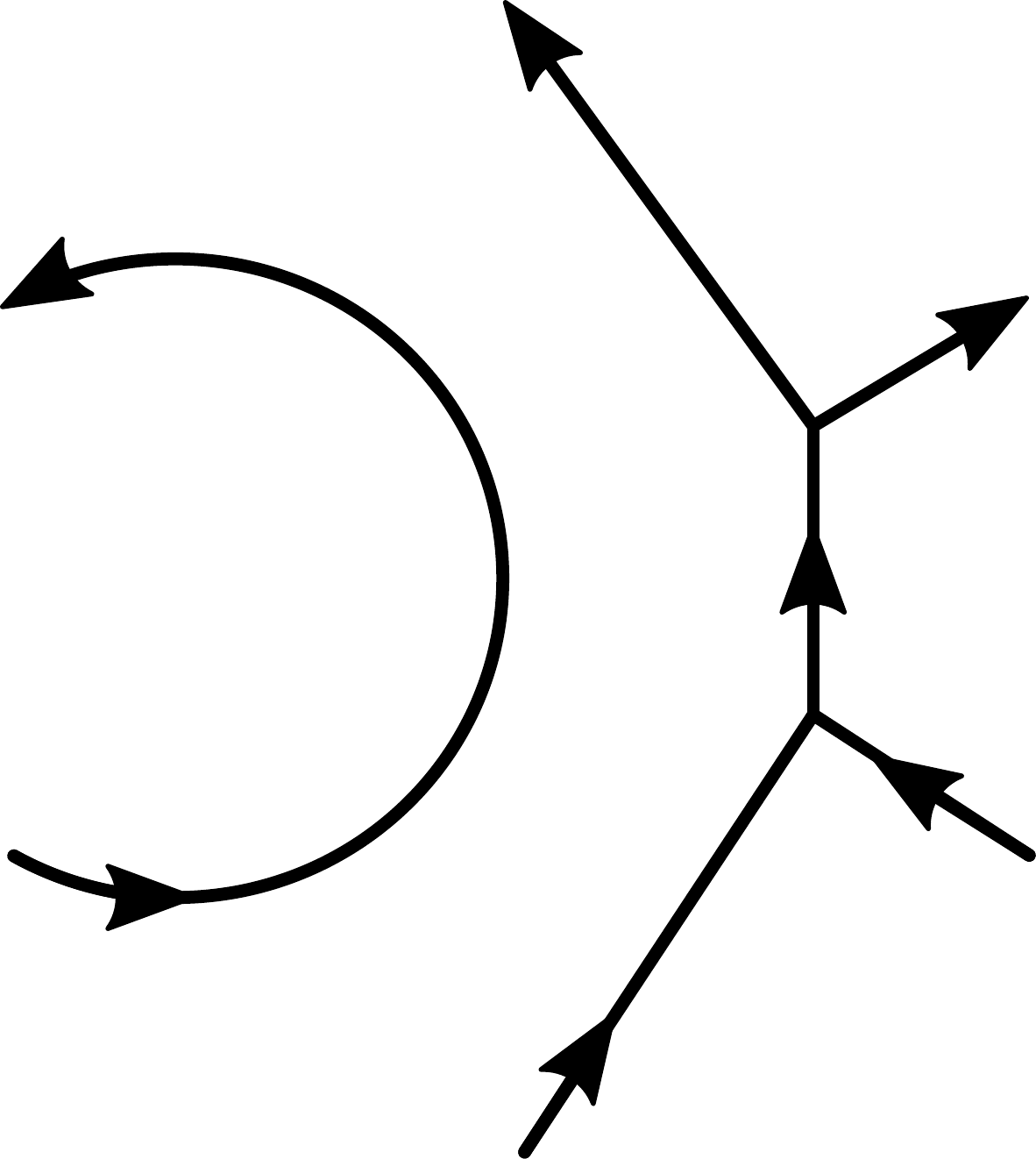}
		\end{gathered}\:\:\:\Bigg\rangle = \Bigg\langle\:\: \begin{gathered}
			\labellist
			\pinlabel $2$ at 40 190
			\pinlabel $2$ at 232 140
			\pinlabel $2$ at 232 260
			\endlabellist
			\includegraphics[width=.12\textwidth]{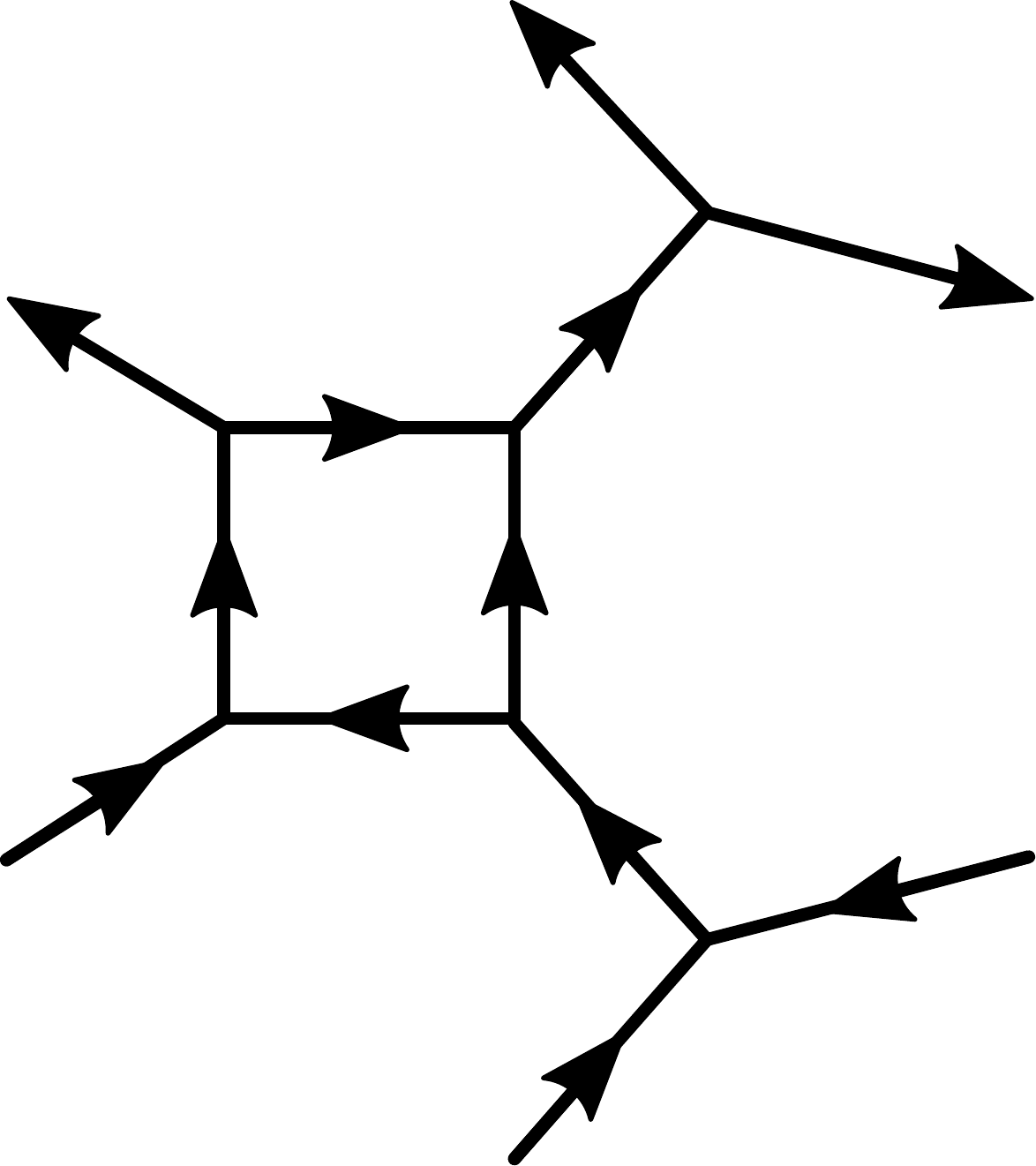}
		\end{gathered}\:\:\:\Bigg\rangle + \Bigg\langle\:\:\begin{gathered}
			\labellist
			\pinlabel $2$ at 40 190
			\endlabellist
			\includegraphics[width=.12\textwidth]{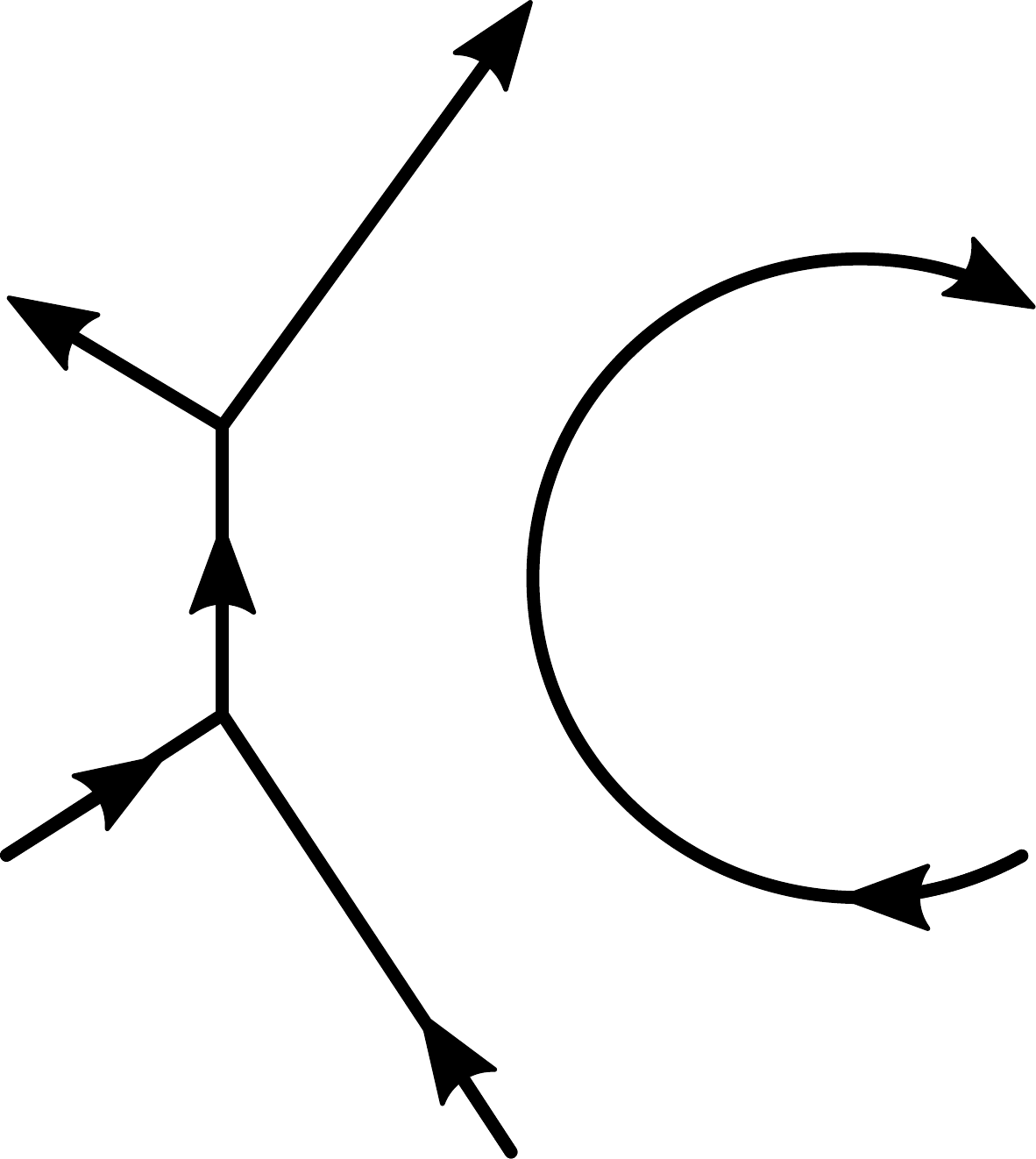}
		\end{gathered}\:\:\:\Bigg\rangle
	\]
	\captionsetup{width=.8\linewidth}
	\caption{Local relations in MOY calculus. Edges without explicit labels are labeled $1$.}
	\label{fig:MOYcalculus}
\end{figure} 

Given a link diagram, the $\sl(N)$ polynomial can be computed as a weighted sum of MOY polynomials obtained by taking complete resolutions of the diagram, where one of the two possible resolutions introduces trivalent vertices and an edge labeled $2$. See Figure~\ref{fig:slNpolynomialFromMOY}.

\begin{figure}[!ht]
	\centering \begin{align*}
		\begin{gathered}
		\includegraphics[width=.08\textwidth]{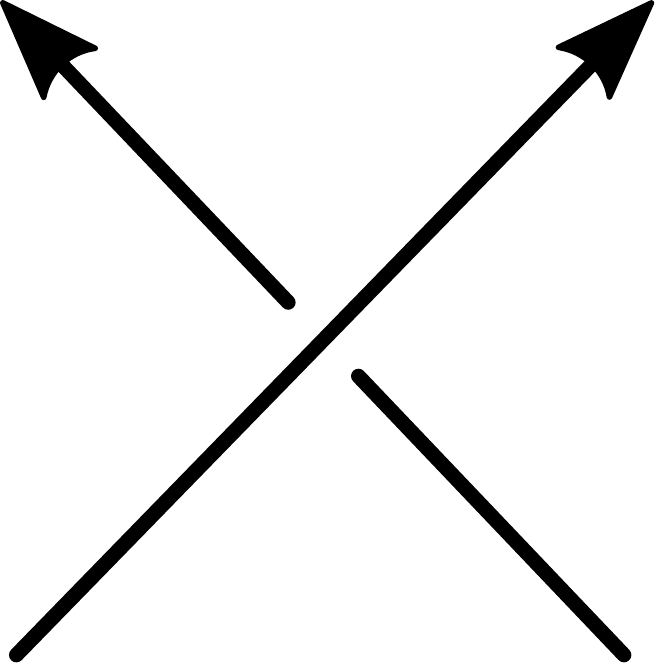}
		\end{gathered} &= q^{N-1}\begin{gathered}
		\includegraphics[width=.08\textwidth]{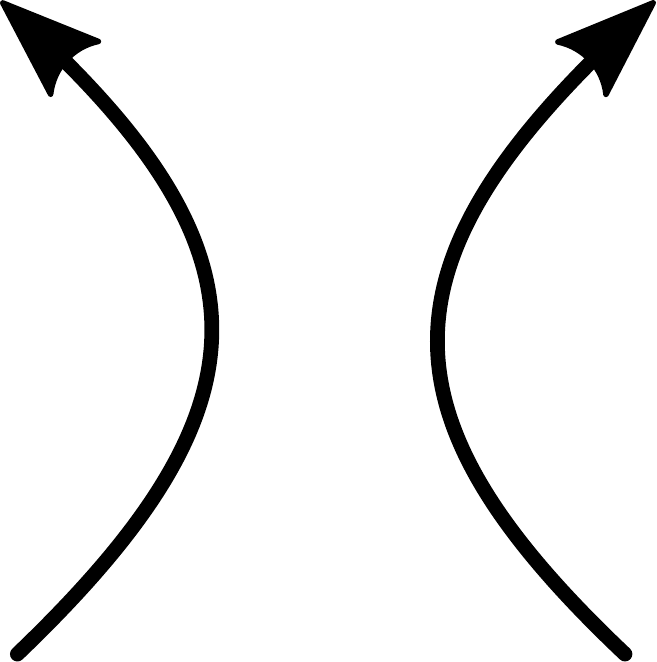}
		\end{gathered} - q^{+N} \begin{gathered}
			\labellist
			\pinlabel $2$ at 130 90
			\endlabellist
			\includegraphics[width=.08\textwidth]{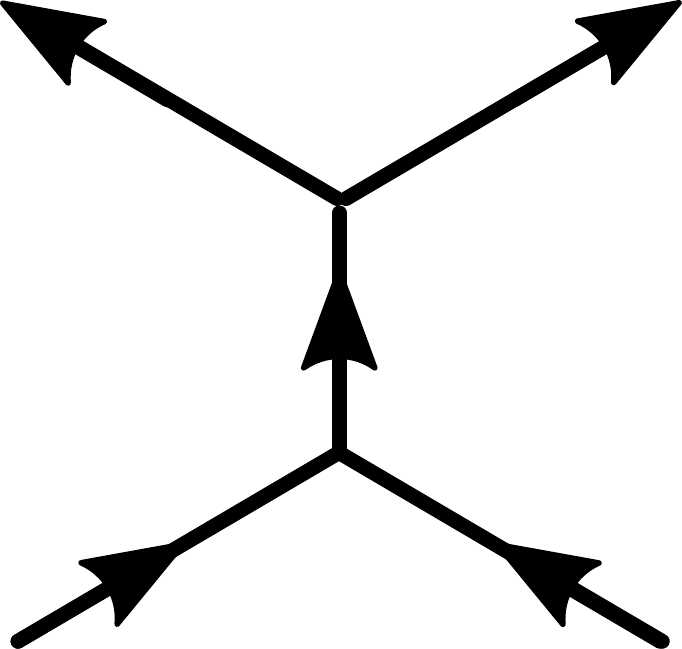}
		\end{gathered}\\
\begin{gathered}
		\includegraphics[width=.08\textwidth]{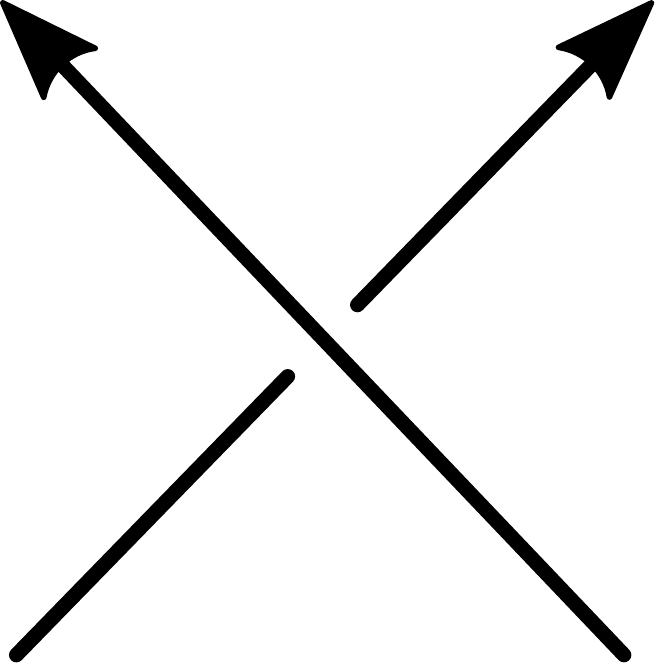}
	\end{gathered} &= q^{1 - N}\begin{gathered}
		\includegraphics[width=.08\textwidth]{orientedResolution.pdf}
	\end{gathered} - q^{-N} \begin{gathered}
		\labellist
			\pinlabel $2$ at 130 90
			\endlabellist
		\includegraphics[width=.08\textwidth]{otherResolution}
	\end{gathered}
	\end{align*}
	\vspace{-10pt}
	\captionsetup{width=.8\linewidth}
	\caption{The $\sl(N)$ polynomial from the MOY polynomial. Edges without explicit labels are labeled $1$.}
	\label{fig:slNpolynomialFromMOY}
\end{figure}

\begin{example}\label{example:thetaWebPolynomial}
	Consider applying the first relation of Figure~\ref{fig:slNpolynomialFromMOY} to the positive crossing in the diagram of unknot described by $T(2,1)$. The first resolution consists of two planar loops which can be thought of as $T(2,0) = O \sqcup O$, while the second resolution is called the \textit{$\theta$-web}: \[
		\begin{gathered}
			\labellist
			\pinlabel $2$ at 135 55
			\endlabellist
			\includegraphics[width = .11\textwidth]{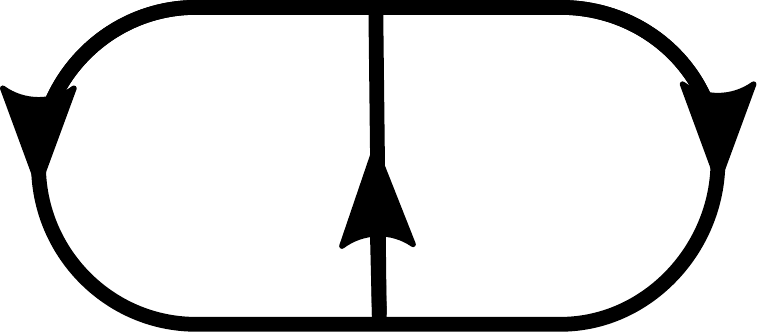}
		\end{gathered}
	\]The relation states that $P_N(T(2,1)) = q^{N-1} P_N(O\sqcup O) - q^N P_N(\smallTheta)$. Using MOY calculus (Figure~\ref{fig:MOYcalculus}), we find that $P_N(O \sqcup O) = [N]^2$ and $P_N(\smallTheta) = [N][N-1]$ from which it follows that \begin{align*}
		P_N(T(2,1)) &= [N]( q^{N-1}[N] - q^N [N-1])\\
		&= [N] \left(\frac{q^{N-1}(q^N - q^{-N}) - q^N(q^{N-1} - q^{-N+1})}{q-q^{-1}} \right) = [N]
	\end{align*}thereby illustrating one instance of invariance of $P_N$ under the first Reidemeister move. 
\end{example}

The construction of $\sl(N)$ homology, which categorifies the $\sl(N)$ polynomial, begins by categorifying the MOY polynomial of an $\sl(N)$ web. Associated to each web $W$ is a \textit{state space} $C_N(W)$, which takes the form of a finitely-generated free abelian group equipped with a $\Z$-grading called the $q$-grading. The Poincar\'e polynomial of $C_N(W)$ is $P_N(W)$. Of course, the polynomial $P_N(W)$ therefore determines the graded isomorphism type of $C_N(W)$, so as a group by itself, the state space $C_N(W)$ is contains nothing new. What makes $C_N(W)$ a categorification of $P_N(W)$ is \textit{functoriality}. There is a notion of a cobordism between webs called a \textit{foam}, and a foam $F\colon W_0 \to W_1$ induces a map $C_N(F)\colon C_N(W_0) \to C_N(W_1)$ on state spaces which is functorial under gluing. Each foam $F$ has a \textit{degree} $d(F)$, and the induced map is homogeneous of degree $d(F)$.

Just as a web is a $1$-complex in $\R^2$ with trivalent singularities and labels on its edges, a foam is a $2$-complex in $[0,1]\x\R^2$ with constrained singularities and labels on its $2$-dimensional facets. For example, the \textit{identity foam} $\Id_W$ of a web $W$ is just the product $[0,1] \x W \subset [0,1] \x \R^2$ whose additional pieces of data, such as orientations and labels, are inherited from those of $W$. One novel feature of foams is that facets may carry finitely many marked points called \textit{dots}, which are directly analogous to the dots that cobordisms may carry in Khovanov homology. For a precise definition of a foam, see for example \cite{MR4164001,https://doi.org/10.48550/arxiv.2211.08409}. 

\begin{example}\label{example:stateSpaceUnknot}
	Let $O$ denote an oriented planar loop labeled $1$ thought of as a web. Since $P_N(O) = [N]$, we know that $C_N(O)$ is graded-isomorphic to $q^{1-N}H^*(\CP^{N-1})$ where $q^{1-N}$ denotes the grading shift required so that its Poincar\'e polynomial is $[N]$. This grading shift is just the complex dimension of $\CP^{N-1}$, which is half its real dimension. It turns out that there is a particular isomorphism, which we will denote by an equality $C_N(O) = q^{1-N}H^*(\CP^{N-1})$, that is particularly nice in the following way. Consider the identity foam of $O$ with a single dot added to its interior. This foam induces a map $C_N(O) \to C_N(0)$ of degree $2$ called the \textit{dot map}. Under the identification $C_N(O) = q^{1-N}H^*(\CP^{N-1})$, the dot map is cup product with the positive generator of $q^{1-N}H^2(\CP^{N-1})$. In particular, it is cup product with the first Chern class of the hyperplane line bundle $\sr O(1)$ over $\CP^{N-1}$, which is dual to the tautological line bundle $\sr O(-1)$. The map may also be viewed as multiplication by $X$ under the standard identification $H^*(\CP^{N-1}) = \Z[X]/X^N$.
\end{example}

\begin{rem}\label{rem:transposes}
	Let $W$ be a web, and consider a foam $F\colon \emp \to W$ where $\emp$ denotes the empty web. The state space of the empty web is $\Z$, and the foam $F$ induces a map $\Z \to C_N(W)$. We may think of the foam $F$ itself as an element of the state space $C_N(W)$ by identifying it with the image of $1 \in \Z$ under its induced map. It turns out that the state space of any web is generated by foams. As an example, consider the ``cup'' foam $D\colon \emp \to O$ consisting of just a single disc. The map induced by the cup sends $1 \in \Z$ to $1 \in q^{1-N}H^0(\CP^{N-1})$. The foams obtained by adding $i$ dots to $D$ for $i = 0,1,\ldots,N-1$ are therefore a basis for the state space of $O$ by Example~\ref{example:stateSpaceUnknot}.

	Given two foams $F,G\colon \emp \to W$, there is a \textit{pairing} $\langle F,G \rangle \in \Z$ defined in the following way. The \textit{mirror} of $G$ is a foam $\ol{G}\colon W \to \emp$ obtained by reflecting $G$ through a plane. The foam obtained by gluing $F$ to $\ol{G}$ along their common boundary $W$ is a foam $F \cup_W \ol{G} \colon \emp \to \emp$. This foam induces a map $\Z \to \Z$, which is given by multiplication by an integer. This integer is precisely the pairing $\langle F,G\rangle \in \Z$. This pairing is defined on all of $C_N(W)$ and turns out to be nondegenerate. The pairing on $C_N(O) = q^{1-N}H^*(\CP^{N-1})$ is precisely the Poincar\'e pairing with respect to the negative orientation on $\CP^{N-1}$. In particular, the map $\Z \to \Z$ induced by a sphere with $N-1$ dots is multiplication by $-1$. 

	The mirror of a foam $H\colon W_0 \to W_1$ is a foam $\ol{H}\colon W_1 \to W_0$, and the induced maps $C_N(H)\colon C_N(W_0) \to C_N(W_1)$ and $C_N(\ol{H})\colon C_N(W_1)\to C_N(W_0)$ are transposes of each other with respect to the described nondegenerate pairings. 
\end{rem}

\begin{figure}[!ht]
	\centering
	\includegraphics[width=.12\textwidth]{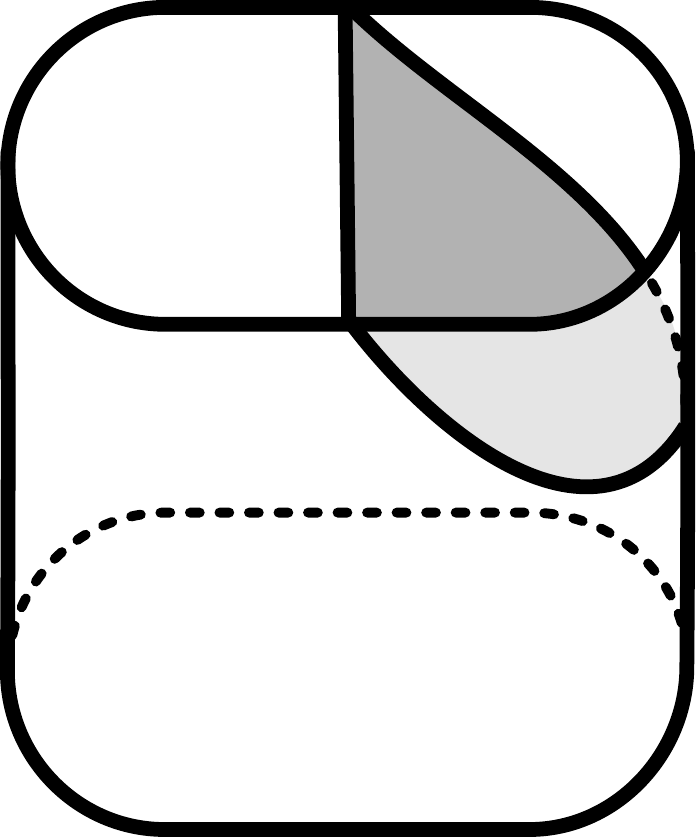}\hspace{30pt}
	\includegraphics[width=.12\textwidth]{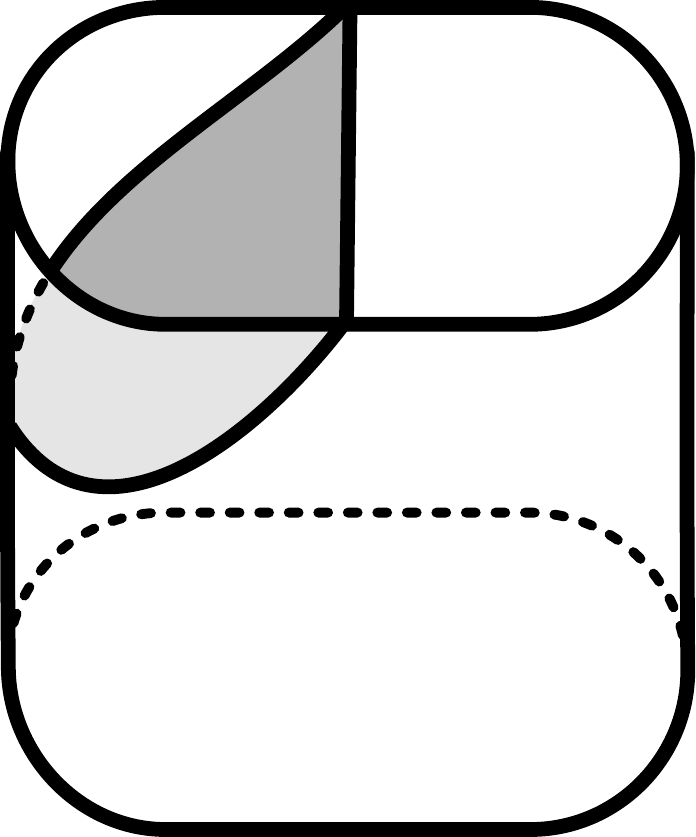}
	\captionsetup{width=.8\linewidth}
	\caption{Two foams from $O$, on the bottom, to the $\theta$-web, on the top. The maps induced on state spaces are the maps induced by the two projections $\pi_A,\pi_B\colon\F(1,1;N) \to \CP^{N-1}$.}
	\label{fig:foamsToThetaWeb}
\end{figure}

\begin{example}\label{example:stateSpaceThetaGraph}
	Consider the $\theta$-web defined in Example~\ref{example:thetaWebPolynomial} whose MOY polynomial is $[N][N-1]$. It follows that $C_N(\smallTheta)$ is graded-isomorphic to $q^{3-2N} H^*(\F(1,1;N))$. We note that the complex dimension of $\F(1,1;N)$ is $2N-3$. Again, there turns out to be a particularly nice isomorphism between them that we denote by an equality $C_N(\smallTheta) = q^{3-2N}H^*(\F(1,1;N))$. The foams in Figure~\ref{fig:foamsToThetaWeb} induce maps from $C_N(O) = q^{3-2N}H^*(\CP^{N-1})$ to $C_N(\smallTheta) = q^{3-2N} H^*(\F(1,1;N))$. These two maps are precisely the maps on cohomology induced by the two projections $\pi_A,\pi_B\colon \F(1,1;N) \to \CP^{N-1}$ given by sending $(\Lambda_A,\Lambda_B)$ to $\Lambda_A$ or $\Lambda_B$, respectively. Our convention is that the foam on the left induces $(\pi_A)^*$ while the one on the right induces $(\pi_B)^*$. 

	Just as in section~\ref{sec:SU(N)RepSpaces}, let $\sr A$ and $\sr B$ denote the tautological line bundles over $\F(1,1;N)$ whose fibers over the point $(\Lambda_A,\Lambda_B)$ are the vector spaces $\Lambda_A$ and $\Lambda_B$, respectively. Note that $\sr A$, $\sr B$ may be obtained by pulling back the tautological line bundle $\sr O(-1)$ over $\CP^{N-1}$ by the projections $\pi_A,\pi_B$. Starting with the identity foam of the $\theta$-web, consider adding a dot to either the leftmost facet or to the rightmost facet. These two foams are denoted \[
		\includegraphics[width=.11\textwidth]{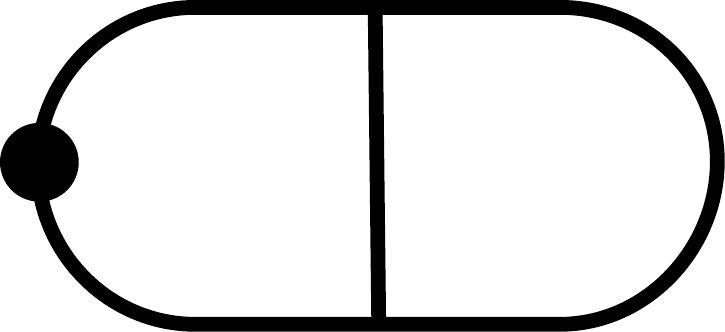} \hspace{30pt} \includegraphics[width=.11\textwidth]{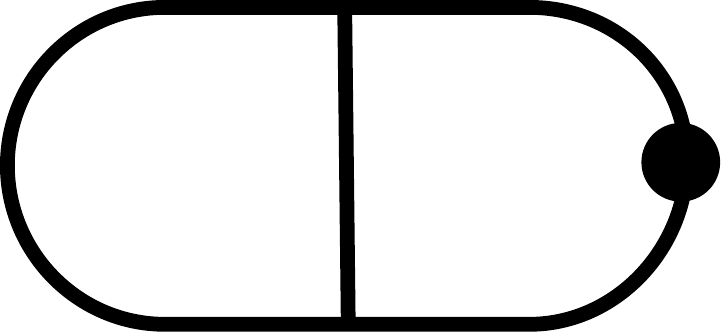}
	\]These foams induce endomorphisms of degree $2$ on the state space of the $\theta$-web. Under the identification $C_N(\smallTheta) = q^{3-2N}H^*(\F(1,1;N))$, these endomorphisms are precisely cup product with the negatives of the first Chern classes of $\sr A$ and $\sr B$, respectively. Our convention is that the foam on the left induces $-c_1(\sr A)$ while the foam on the right induces $-c_1(\sr B)$. 
\end{example}

Once the MOY polynomial of an $\sl(N)$ web has been categorified, $\sl(N)$ homology for links is obtained by categorifying the relations in Figure~\ref{fig:slNpolynomialFromMOY}. The state space of a web is thought of as a chain complex supported in homological grading zero. The complex associated to a diagram with ordered crossings is inductively defined using Figure~\ref{fig:edgeMapsZipUnzip}. In particular, the complex associated to the diagram on the left is defined to be the mapping cone of the complexes associated to the diagrams on the right. In other words, the chain complex assigned to a diagram is obtained from the cube of resolutions construction where the edge maps are given by the local foams given in Figure~\ref{fig:edgeMapsZipUnzip}. The bigraded chain homotopy type of this complex is invariant under Reidemeister moves, and its bigraded homology group is the $\sl(N)$ homology of the oriented link. 

\begin{figure}[!ht]
	\centering
	\vspace{-5pt}
	\begin{align*}
		\mathrm{C}_N\left(\:\:\begin{gathered}
			\vspace{-4pt}
			\includegraphics[width=.08\textwidth]{positiveCrossing.pdf}
		\end{gathered}\:\:\right) 
		&\coloneqq 
		\begin{tikzcd}[ampersand replacement=\&, column sep=120pt]
			h^{-1}q^{N}\mathrm{C}_N\left(\:\:\begin{gathered}
			\vspace{-4pt}
			\labellist
			\pinlabel $2$ at 130 90
			\endlabellist
			\includegraphics[width=.08\textwidth]{otherResolution}
		\end{gathered}\:\:\right) 
			\ar[r,"\displaystyle{\mathrm{C}_N\left(\:\:\:\begin{gathered}
				\vspace{-4pt}
				\includegraphics[width=.12\textwidth]{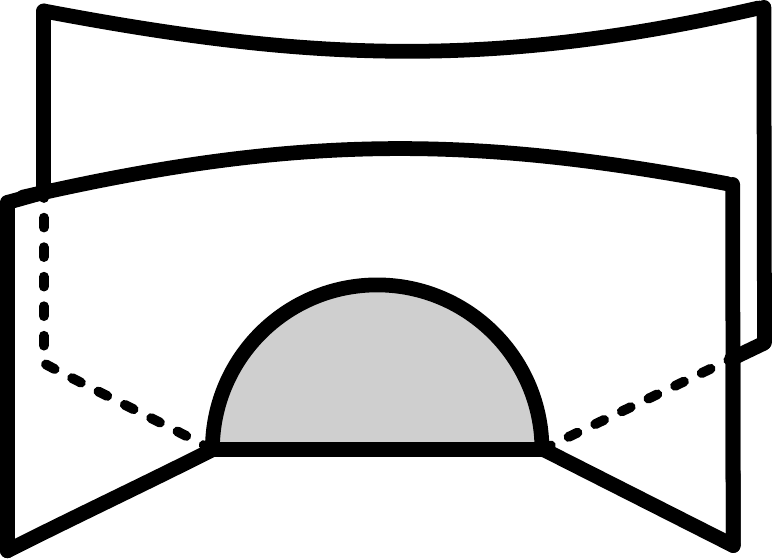}
				\end{gathered}
				\:\:\:\right)}"] \& q^{N-1}\mathrm{C}_N\left(\:\:\begin{gathered}
			\vspace{-4pt}
			\includegraphics[width=.08\textwidth]{orientedResolution.pdf}
		\end{gathered}\:\:\right)
		\end{tikzcd}\\
		\mathrm{C}_N\left(\:\:\begin{gathered}
			\vspace{-4pt}
			\includegraphics[width=.08\textwidth]{NegativeCrossing.pdf}
		\end{gathered}\:\:\right) 
		&\coloneqq 
		\begin{tikzcd}[ampersand replacement=\&, column sep=120pt]
			q^{1-N}\mathrm{C}_N\left(\:\:\begin{gathered}
			\vspace{-4pt}
			\includegraphics[width=.08\textwidth]{OrientedResolution.pdf}
		\end{gathered}\:\:\right) 
			\ar[r,"\displaystyle{\mathrm{C}_N\left(\:\:\:\begin{gathered}
				\vspace{-4pt}
				\includegraphics[width=.12\textwidth]{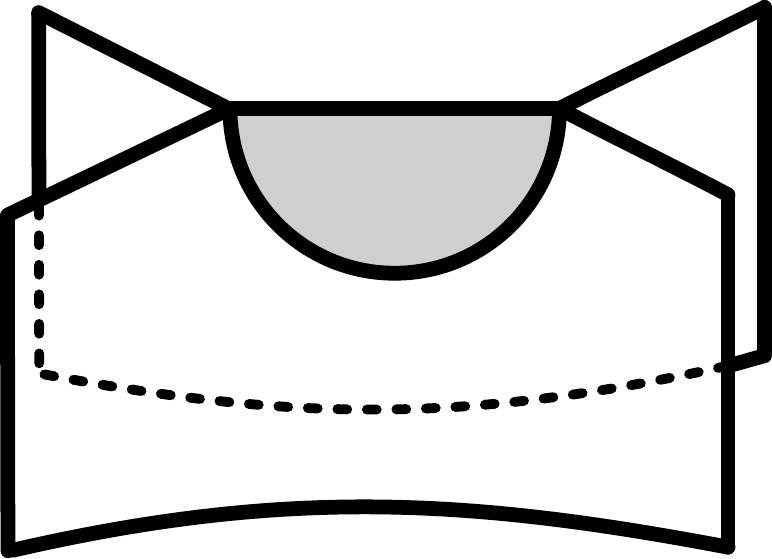}
				\end{gathered}
				\:\:\:\right)}"] \& hq^{-N}\mathrm{C}_N\left(\:\:\begin{gathered}
			\vspace{-4pt}
			\labellist
			\pinlabel $2$ at 130 90
			\endlabellist
			\includegraphics[width=.08\textwidth]{otherResolution}
		\end{gathered}\:\:\right)
		\end{tikzcd}
	\end{align*}
	\vspace{-5pt}
	\captionsetup{width=.8\linewidth}
	\caption{Edge maps in the cube of resolutions defining $\sl(N)$ homology. The notation $h^iq^j$ denotes a shift in the homological grading by $i$ and a shift in the $q$-grading by $j$. Our convention for signs in a mapping cone is that an odd shift in homological grading of a complex negates the differential.}
	\label{fig:edgeMapsZipUnzip}
\end{figure}

\begin{example}
	Consider the diagram of the unknot with a single negative crossing, which we can think of as $T(2,-1)$. Then $C_N(T(2,-1))$ is supported in homological gradings $0$ and $1$. In homological grading $0$, the complex is $q^{1-N}C_N(T(2,0)) = q^{1-N}(q^{1-N}H^*(\CP^{N-1}) \otimes q^{1-N}H^*(\CP^{N-1}))$ while in homological grading $1$, the complex is $q^{-N}C_N(\smallTheta) = q^{-N}q^{3-2N}H^*(\F(1,1;N))$. The differential is precisely the map on cohomology induced by $(\pi_A,\pi_B)\colon \F(1,1;N) \to \CP^{N-1} \x \CP^{N-1}$. This can be seen by comparing the foam defining the differential with the foams in Figure~\ref{fig:foamsToThetaWeb} together with the facts asserted in Remark~\ref{rem:transposes} and Example~\ref{example:stateSpaceThetaGraph}. 

	This map is surjective because the first Chern classes of the line bundles $\sr A$ and $\sr B$ generate $H^*(\F(1,1;N))$. It follows that the homology of this complex is supported in homological grading zero, and it is free abelian with graded rank $[N]$ thereby illustrating one instance of invariance of $\sl(N)$ homology under the first Reidemeister move. 

	Now consider the complex associated to the diagram $T(2,1)$. Then $C_N(T(2,1))$ is supported in homological gradings $-1$ and $0$. By Remark~\ref{rem:transposes}, the differential is just the transpose of the differential in the complex $C_N(T(2,-1))$ so the map is injective and its cokernel is free abelian of graded rank $[N]$. 
\end{example}

Before turning to the proof of Observation~\ref{obs:mainObs}, we record two facts that we will use. The first is a special case of the \textit{dot migration relation} \cite[Equation (3.9)]{MR3545951}, and the second is an isomorphism that categorifies the first equality in the second row of Figure~\ref{fig:MOYcalculus}, and is an analogue of delooping in Khovanov homology \cite{MR2320156}. The second follows from \cite[Equations 3.10 and 3.11]{MR3545951}.

\begin{lem}\label{lem:dotmigration}
	Generalizing the notation used in Example~\ref{example:stateSpaceThetaGraph}, we let a diagram of a web $W$ with a dot on an edge denote the identity foam of $W$ with a dot on the facet corresponding to the edge. We have the equality of maps:\[
		C_N\left(\:\begin{gathered}
			\vspace{-3pt}
			\labellist
			\pinlabel $2$ at 125 20
			\endlabellist
			\includegraphics[width=.08\textwidth]{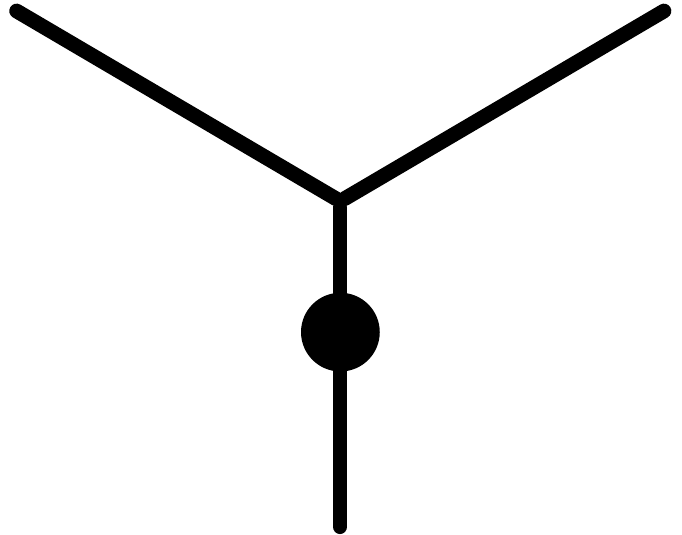}
		\end{gathered}\:\right) = C_N\left(\:\begin{gathered}
			\vspace{-3pt}
			\labellist
			\pinlabel $2$ at 125 20
			\endlabellist
			\includegraphics[width=.08\textwidth]{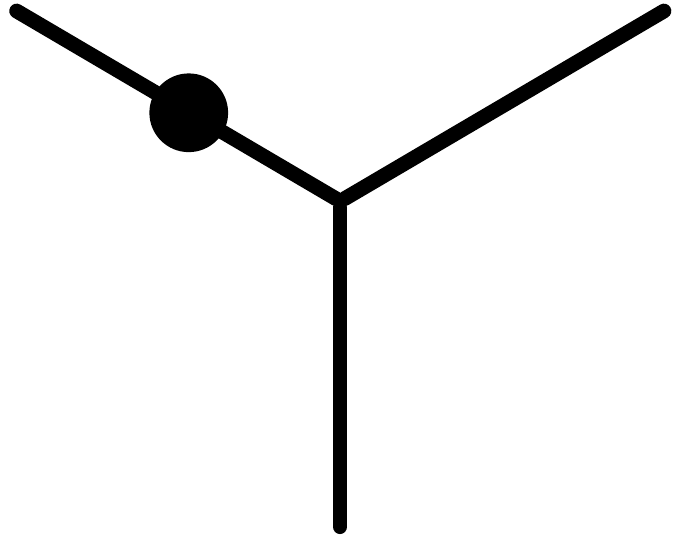}
		\end{gathered}\:\right) + C_N\left(\:\begin{gathered}
			\vspace{-3pt}
			\labellist
			\pinlabel $2$ at 125 20
			\endlabellist
			\includegraphics[width=.08\textwidth]{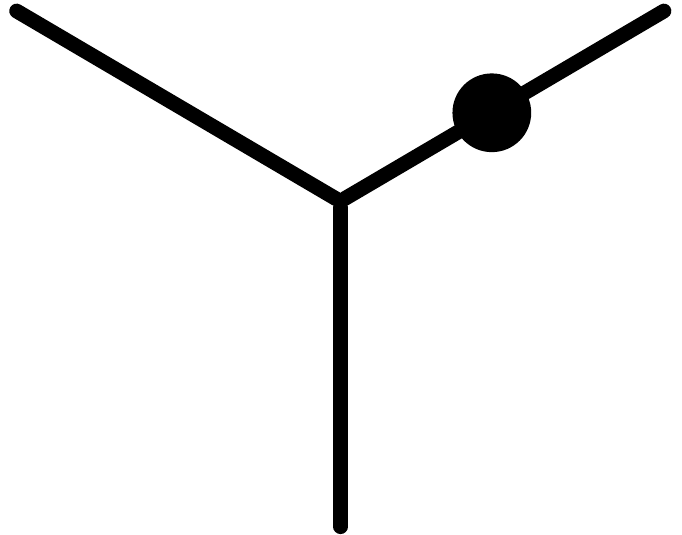}
		\end{gathered}\:\right)
	\]The relation is valid if all edges are oriented upwards or if all edges are oriented downwards. 
\end{lem}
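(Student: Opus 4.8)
The plan is to exploit that the relation is \emph{local}: all three foams in the statement are equal to the identity foam $\Id_W$ outside a small ball around the trivalent vertex $v$, and inside that ball they differ only in which of the three incident facets carries a single dot. So I would first reduce the asserted equality of endomorphisms of $C_N(W)$ to a purely local identity near $v$, and then recognize that local identity as the Whitney sum formula for first Chern classes.

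For the reduction I would use Remark~\ref{rem:transposes}: the foam pairing on every state space is nondegenerate and every state space is generated by foams $\emp\to W$, so two foam-induced endomorphisms of $C_N(W)$ agree as soon as they induce the same integer after being capped off, on both sides, by arbitrary foams $\emp\to W$ and $W\to\emp$. Such a capped-off foam is closed and the integer it produces is an isotopy invariant of it; moreover the three closed foams obtained this way still agree away from the ball around $v$, so it suffices to verify the identity ``universally'', with the vertex placed inside the simplest possible web. This reduced statement is precisely the dot migration relation \cite[Equation (3.9)]{MR3545951} in the case of edge labels $(1,1,2)$ and a single dot, so one option is simply to invoke it; I record the geometric reason below both as a sanity check and because it makes the sign bookkeeping transparent.

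Geometrically, to the thick edge at $v$ is associated a tautological rank-two bundle $\sr E$ and to the two thin edges the tautological line bundles $\sr E_1,\sr E_2$, and the vertex condition of Figure~\ref{fig:mergeAndSplitVertices} --- two lines spanning a plane --- is exactly the splitting $\sr E = \sr E_1 \oplus \sr E_2$. With the conventions of Example~\ref{example:stateSpaceThetaGraph}, a single dot on a thin facet acts by $\cup\,(-c_1(\sr E_i))$ and a single dot on the thick facet acts by $\cup\,(-c_1(\sr E))$, so the relation amounts to
\[
	-c_1(\sr E) \;=\; -c_1(\sr E_1 \oplus \sr E_2) \;=\; -c_1(\sr E_1) + \bigl(-c_1(\sr E_2)\bigr),
\]
which is Whitney additivity of $c_1$. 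Concretely one can run this on the $\theta$-web of Example~\ref{example:thetaWebPolynomial}: there the thick edge corresponds to the plane $\Lambda_A \oplus \Lambda_B$ over $\F(1,1;N)$, so $\sr E = \sr A \oplus \sr B$, a dot on the thick facet induces $-c_1(\sr A)-c_1(\sr B)$ on $C_N(\smallTheta) = q^{3-2N}H^*(\F(1,1;N))$ by additivity, and this is the sum of the maps $-c_1(\sr A)$ and $-c_1(\sr B)$ induced by dots on the two thin facets. Nothing here uses the global orientation, so the identity holds with all edges at $v$ oriented upward or all downward. I expect the only delicate point to be the locality reduction --- that a relation between foams agreeing outside a ball may be checked in a single universal local model --- which is exactly where the nondegeneracy of the foam pairing and the availability of enough foams to separate state-space elements (Remark~\ref{rem:transposes}) are used; the Chern-class computation itself is routine.
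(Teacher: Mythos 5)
The paper offers no proof of this lemma at all: it records it as ``a special case of the dot migration relation'' and cites \cite[Equation (3.9)]{MR3545951}. Your fallback option --- simply invoking that citation --- is therefore exactly the paper's argument, so your proposal is correct and, at its core, takes the same route. The extra material you supply is worth a comment. The locality reduction via the nondegenerate foam pairing of Remark~\ref{rem:transposes} is the standard and correct way to convert a local relation on closed-foam evaluations into an equality of induced maps on state spaces, and it is essentially how the cited relation is established in the foam-evaluation framework. The Whitney-sum reading $-c_1(\sr E) = -c_1(\sr E_1) - c_1(\sr E_2)$ is consistent with the paper's conventions (Examples~\ref{example:stateSpaceUnknot} and~\ref{example:stateSpaceThetaGraph}, where thin-facet dots act by $-c_1$ of the tautological line bundles), and it is the right way to remember the relation and its signs. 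As a standalone proof, however, it is mildly circular: to run it you must first know that a single dot on the $2$-labeled facet acts by $-c_1$ of the rank-two tautological bundle under the identification of the state space with the cohomology of the relevant flag variety, and that identification of the thick-facet dot is itself established by (indeed is essentially equivalent to) the dot migration relation. You hedge correctly by presenting the geometric computation as a sanity check rather than as the proof, so there is no gap --- but be aware that the load-bearing step really is the citation (or, unpacked, the closed formula for foam evaluation in terms of symmetric functions, where the relation becomes $e_1(x_1,x_2) = x_1 + x_2$).
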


\begin{lem}\label{lem:delooping}
	The following maps are inverse isomorphisms: \[
		\begin{tikzcd}[row sep=-25pt,column sep=100pt]
			& \:\:\:\:q C_N\left(\:\:\:\begin{gathered}
			\labellist
			\pinlabel $2$ at 40 20
			\endlabellist
			\includegraphics[width=.0085\textwidth]{verticalMOY}
		\end{gathered}\:\:\: \right)\\
		C_N\left(\:\:\begin{gathered}
			\labellist
			\pinlabel $2$ at 90 20
			\pinlabel $2$ at 90 220
			\endlabellist
			\includegraphics[width=.045\textwidth]{bigonMOY}
		\end{gathered}\:\: \right) \ar[ru,pos=0.75,"\displaystyle{C_N\left(\,\capFoam\,\right)}"] \ar[rd,swap,pos=0.85,"\displaystyle{-\,C_N\left(\,\capFoam\circ\Ldot\right)}" {yshift=1pt}] & \oplus\\
		& q^{-1} C_N\left(\:\:\:\begin{gathered}
			\labellist
			\pinlabel $2$ at 40 20
			\endlabellist
			\includegraphics[width=.0085\textwidth]{verticalMOY}
		\end{gathered}\:\:\: \right)
		\end{tikzcd}
		\hspace{10pt}
		\begin{tikzcd}[row sep=-25pt,column sep=100pt]
			\:\:\:\:q C_N\left(\:\:\:\begin{gathered}
			\labellist
			\pinlabel $2$ at 40 20
			\endlabellist
			\includegraphics[width=.0085\textwidth]{verticalMOY}
		\end{gathered}\:\:\: \right) \ar[rd,pos=0.1,"\displaystyle{C_N\left(\Rdot\circ\cupFoam\,\right)}" {yshift=-4pt}] &\\
		\oplus & C_N\left(\:\:\begin{gathered}
			\labellist
			\pinlabel $2$ at 90 20
			\pinlabel $2$ at 90 220
			\endlabellist
			\includegraphics[width=.045\textwidth]{bigonMOY}
		\end{gathered}\:\: \right)\\
		q^{-1} C_N\left(\:\:\:\begin{gathered}
			\labellist
			\pinlabel $2$ at 40 20
			\endlabellist
			\includegraphics[width=.0085\textwidth]{verticalMOY}
		\end{gathered}\:\:\: \right) \ar[ru,swap,pos=0.2,"\displaystyle{C_N\left(\,\cupFoam\,\right)}" {yshift=3pt}] &
		\end{tikzcd}
	\]In particular, we have the identities \[
		C_N\left(\:\capFoam\circ \Rdot \circ \cupFoam \:\right) = \Id = - C_N\left(\:\capFoam \circ \Ldot \circ \cupFoam \:\right)
	\]and \[
		C_N\left(\:\capFoam\circ \cupFoam \:\right) = 0 = C_N\left(\:\capFoam\circ \Ldot\circ\Rdot\circ \cupFoam \:\right).
	\]
\end{lem}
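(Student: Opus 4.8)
The plan is to verify the four displayed foam identities directly, and then to deduce that the two maps of the lemma are mutually inverse by a rank count that sidesteps the more painful ``other direction'' composite. Write $B$ for the bigon web, $E$ for the vertical edge labelled $2$, and abbreviate the two maps in the statement as
\[
f\colon C_N(B)\longrightarrow qC_N(E)\oplus q^{-1}C_N(E),\qquad g\colon qC_N(E)\oplus q^{-1}C_N(E)\longrightarrow C_N(B).
\]
By functoriality of $F\mapsto C_N(F)$ under gluing of foams, the composite $f\circ g$ is the $2\times 2$ matrix
\[
f\circ g=\begin{pmatrix} C_N\!\left(\capFoam\circ\Rdot\circ\cupFoam\right) & C_N\!\left(\capFoam\circ\cupFoam\right)\\[4pt] -\,C_N\!\left(\capFoam\circ\Ldot\circ\Rdot\circ\cupFoam\right) & -\,C_N\!\left(\capFoam\circ\Ldot\circ\cupFoam\right)\end{pmatrix},
\]
so the assertion $f\circ g=\Id$ on $qC_N(E)\oplus q^{-1}C_N(E)$ is precisely equivalent to the two pairs of identities displayed in the lemma.

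First I would establish those identities. Each matrix entry is the endomorphism of $C_N(E)$ induced by a foam which is the identity foam $\Id_E$ with a small closed ``bubble'' attached, possibly carrying one or two dots on its $1$-labelled facets. Such bubbles are computed by the standard local relations for $\sl(N)$ foams: the undotted $2$-labelled bubble evaluates to $0$, giving $C_N(\capFoam\circ\cupFoam)=0$; a single dot on the right $1$-facet turns the bubble into a $\theta$-type closed foam whose evaluation is the normalization scalar $\pm1$, giving $C_N(\capFoam\circ\Rdot\circ\cupFoam)=\Id$; and the two-dot cases are reduced by the dot-migration relation of Lemma~\ref{lem:dotmigration} together with the vanishing of a $2$-labelled sphere carrying too few dots, giving $C_N(\capFoam\circ\Ldot\circ\Rdot\circ\cupFoam)=0$ and $C_N(\capFoam\circ\Ldot\circ\cupFoam)=-\Id$. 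Concretely, all four evaluations are instances of \cite[Equations 3.10 and 3.11]{MR3545951} after translating to the conventions used here; in particular the sign in the last identity is the same one recorded in Remark~\ref{rem:transposes}, where a sphere with $N-1$ dots induces multiplication by $-1$.

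Granting these four identities, $f\circ g=\Id$, so $g$ admits a left inverse; in particular $g$ is injective and its image is a direct summand of $C_N(B)$. Now by the bigon relation of Figure~\ref{fig:MOYcalculus} we have $P_N(B)=[2]\,P_N(E)=q\,P_N(E)+q^{-1}P_N(E)$, so $C_N(B)$ and $qC_N(E)\oplus q^{-1}C_N(E)$ are free abelian groups with the same finite rank in each $q$-degree. A split injection between finitely generated free abelian groups of equal rank is an isomorphism, so $g$ is an isomorphism; hence $f=g^{-1}$, the composite $g\circ f=\Id$ as well, and $f,g$ are inverse isomorphisms, completing the proof.

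The main obstacle is the foam bookkeeping in the middle paragraph: matching each capped cup with the correct dotted bubble and extracting the precise scalar --- especially the sign in $C_N(\capFoam\circ\Ldot\circ\cupFoam)=-\Id$ --- from the neck-cutting and sphere-evaluation relations, rather than anything structural. Once the conventions of \cite{MR3545951} are pinned down this is a mechanical local computation, and the concluding rank argument uses nothing beyond the MOY bigon relation already in hand.
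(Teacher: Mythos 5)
The paper records this lemma without proof, citing \cite[Equations 3.10 and 3.11]{MR3545951} as its justification, so there is no internal argument to compare against; your proposal is a correct way of filling in that citation. Your $2\times 2$ matrix for $f\circ g$ is computed correctly, and the four entries are exactly the four displayed identities; reducing them to the local bubble evaluations of the cited equations is precisely where the paper also places the burden. One small slip: $C_N\left(\:\capFoam\circ\Ldot\circ\cupFoam\:\right)$ carries a single dot, not two, and it is the one you can get for free from Lemma~\ref{lem:dotmigration} together with the vanishing of the undotted bubble (dot migration converts the sum of the left- and right-dotted bubbles into a dot on the $2$-facet composed with the zero map); by contrast the genuinely two-dotted composite $\capFoam\circ\Ldot\circ\Rdot\circ\cupFoam$ is not a formal consequence of the other three plus dot migration and does need the full dotted-bubble relation from \cite{MR3545951}. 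The genuinely structural addition in your write-up is the rank count: since the MOY bigon relation gives $P_N$ of the bigon equal to $[2]$ times $P_N$ of the edge, the two sides are finitely generated free abelian groups of equal graded rank, so the split injection furnished by $f\circ g=\Id$ is automatically an isomorphism and the composite $g\circ f$ (an endomorphism of the bigon state space, which is harder to evaluate locally) never needs to be touched. That shortcut is valid and is a clean way to promote the one-sided verification to the full statement.
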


\section{Proof of Observation~\ref{obs:mainObs}}\label{sec:proofOfMainObs}

The following proposition, which is well-known to experts, is the key result on the $\sl(N)$ homology side needed to prove Observation~\ref{obs:mainObs}. See for example \cite{MR2491784,MR2948277,https://doi.org/10.48550/arxiv.1701.07525}. 

\begin{prop}\label{prop:simplifyTwists}
	For each $m \ge 1$, there is a homotopy equivalence \[
		C_N\left(\:\:\begin{gathered}
			\includegraphics[width=.035\textwidth]{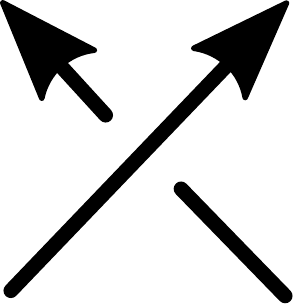}\vspace{-15pt}\\
			\vdots\vspace{-7pt}\\
			m\vspace{-10pt}\\
			\vdots\vspace{-9pt}\\
			\includegraphics[width=.035\textwidth]{smallPositiveCrossing.pdf}
			\vspace{-3pt}
		\end{gathered}\:\:\right) \simeq S_m \coloneqq \begin{tikzcd}
			S_m^{-m} \ar[r] & \cdots \ar[r] & S_m^{-2} \ar[r] & S_m^{-1} \ar[r] & S_m^0
		\end{tikzcd}
	\]where \vspace{-5pt} \[
		S^{-j}_m \coloneqq \begin{cases}
			q^{mN-m}C_N\left(\:\oriRes\:\right) & j = 0\vspace{3pt}\\
			h^{-j}q^{mN-m+ 2j - 1}C_N\left(\:\thickRes\:\right) & 0 < j \leq m
		\end{cases} \text{ and } S_m^{-j-1} \to S_m^{-j} \text{ is } \begin{cases}
			\mathrm{C}_N\left(\:\zip\:\right) & j = 0\vspace{3pt}\\
			C_N\left(\:\TRdot\:\right) - C_N\left(\:\BRdot\:\right) & j \text{ is odd}\vspace{3pt}\\
			C_N\left(\:\TRdot\:\right) - C_N\left(\:\BLdot\:\right) & j > 0 \text{ is even}.
		\end{cases}
	\]
\end{prop}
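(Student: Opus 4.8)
The plan is to induct on $m$. The base case $m=1$ requires nothing: after expanding the shifts, $S_1$ is the two-term complex
\[
	h^{-1}q^{N}\,C_N(\thickRes)\ \longrightarrow\ q^{N-1}\,C_N(\oriRes)
\]
with differential $C_N(\zip)$, and this is exactly the mapping cone that Figure~\ref{fig:edgeMapsZipUnzip} assigns to a single positive crossing.

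For the inductive step, assume $C_N$ of the $m$-fold twist is homotopy equivalent to $S_m$. Stack one more positive crossing on top of the $m$-fold twist and apply Figure~\ref{fig:edgeMapsZipUnzip} to that crossing; writing $W$ for the $m$-fold twist and $W'$ for $W$ with the thick-edge resolution $\thickRes$ glued on top, this exhibits $C_N$ of the $(m{+}1)$-fold twist as the mapping cone
\[
	\Cone\Big( h^{-1}q^{N}\, C_N(W')\ \longrightarrow\ q^{N-1}\, C_N(W) \Big)
\]
where the map is $C_N$ of the zip foam. Because $C_N$ is functorial for foams, I may glue the identity foam on a thick edge onto the entire homotopy equivalence $C_N(W)\simeq S_m$; this replaces $C_N(W)$ by $S_m$ and replaces $C_N(W')$ by the complex $T_m$ obtained from $S_m$ by stacking $\thickRes$ (and its identity foam) on top of every resolution and every dot-foam occurring in $S_m$.

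Now I simplify $T_m$. The only term of $S_m$ in homological degree $0$ is the oriented resolution, and capping it by $\thickRes$ gives a web isotopic to $\thickRes$, so that term of $T_m$ is just $q^{mN-m}C_N(\thickRes)$. Every other term of $S_m$ is a copy of $C_N(\thickRes)$, and capping $\thickRes$ by $\thickRes$ produces a thick-edged bigon, which Lemma~\ref{lem:delooping} identifies with $q\,C_N(\thickRes)\oplus q^{-1}C_N(\thickRes)$. Applying these delooping isomorphisms termwise rewrites $T_m$ — and hence the mapping cone above — as a complex all of whose terms are grading shifts of $C_N(\oriRes)$ or $C_N(\thickRes)$. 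To compute the conjugated differentials I use the dot-migration relation (Lemma~\ref{lem:dotmigration}) to slide the dots carried by $T_m$'s foams off the bigon, together with the explicit evaluations in Lemma~\ref{lem:delooping}, namely $C_N(\capFoam\circ\Rdot\circ\cupFoam)=\Id$, $C_N(\capFoam\circ\Ldot\circ\cupFoam)=-\Id$, and $C_N(\capFoam\circ\cupFoam)=0=C_N(\capFoam\circ\Ldot\circ\Rdot\circ\cupFoam)$. The point is that each of the $m$ newly created $q^{-1}$-summands receives or emits a differential component equal to $\pm\Id$, so the cone admits exactly $m$ Gaussian eliminations, each killing one $q^{-1}$-summand together with one partner term. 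Tracking the $h$- and $q$-shifts, the $m+2$ surviving terms are exactly $S_{m+1}^{0}$ (from $q^{N-1}S_m^0$), $S_{m+1}^{-1}$ (from the capped oriented term), and $S_{m+1}^{-j-1}$ for $1\le j\le m$ (from the $q$-summand of the delooped $S_m^{-j}$), with $C_N(\zip)$ as the differential landing in homological degree $0$ and dot maps everywhere else.

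The one genuinely delicate point — and the step I expect to demand the most care — is verifying that the surviving dot-map differentials are precisely $C_N(\TRdot)-C_N(\BRdot)$ for $j$ odd and $C_N(\TRdot)-C_N(\BLdot)$ for $j>0$ even. Stacking a thick edge and then delooping effectively reflects the bigon, so a dot that lay on a right-hand facet of $S_m^{-j}$ is carried by dot migration to the opposite side in the description of $S_m^{-j-1}$; one must check that this reflection, combined with the sign discrepancy between $C_N(\capFoam\circ\Rdot\circ\cupFoam)$ and $C_N(\capFoam\circ\Ldot\circ\cupFoam)$ and with the mapping-cone sign convention of Figure~\ref{fig:edgeMapsZipUnzip}, reproduces exactly the stated alternation. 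I would render this bookkeeping mechanical by carrying along the explicit basis of $C_N(\thickRes)=q^{3-2N}H^*(\F(1,1;N))$ from Remark~\ref{rem:transposes} and Example~\ref{example:stateSpaceThetaGraph}, on which $C_N(\TRdot)$, $C_N(\BRdot)$, $C_N(\BLdot)$ act by cup product with $\pm c_1(\sr A)$ or $\pm c_1(\sr B)$, thereby reducing the whole verification to a computation in the cohomology ring of $\F(1,1;N)$.
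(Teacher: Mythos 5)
Your proposal follows the paper's proof essentially step for step: induct on $m$, resolve the newly added crossing into a mapping cone, deloop the resulting thick-edge bigons via Lemma~\ref{lem:delooping}, compute the composite differentials using dot migration (Lemma~\ref{lem:dotmigration}) together with the delooping identities, and cancel the $m$ identity components by Gaussian elimination. The ``delicate point'' you defer is precisely the four explicit foam identities the paper records and checks with those two lemmas, so your outline matches the paper's argument in both structure and the tools used.
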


We view these pictures as local modifications of a fixed diagram, but we note that there is also a theory for tangle diagrams (see for example \cite{MR3877770}) analogous to \cite{MR2174270} for which Proposition~\ref{prop:simplifyTwists} is valid.

\begin{rem}\label{rem:simplifiedComplexes}
	Omitting the symbols $C_N(-)$ as a matter of notation, we have \vspace{-5pt} \[
		\begin{tikzcd}[column sep=50pt,row sep=small]
			q^{1-1N}S_1= &[-55pt] & & & h^{-1}q\:\thickRes \ar[r,"{\zip}"] & \oriRes\\
			q^{2-2N}S_2 = & & & h^{-2}q^3\:\thickRes \ar[r,"{\TRdot \displaystyle{-} \BRdot}" {yshift=-2pt}] & h^{-1}q\:\thickRes \ar[r,"{\zip}"] & \oriRes\\
			q^{3-3N}S_3 = & & h^{-3}q^5\:\thickRes \ar[r,"{\TRdot \displaystyle{-} \BLdot}" {yshift=-2pt}] & h^{-2}q^3\:\thickRes \ar[r,"{\TRdot \displaystyle{-} \BRdot}" {yshift=-2pt}] & h^{-1}q\:\thickRes \ar[r,"{\zip}"] & \oriRes\\
			q^{4-4N}S_4 = & h^{-4}q^7\:\thickRes \ar[r,"{\TRdot \displaystyle{-} \BRdot}" {yshift=-2pt}] & h^{-3}q^5\:\thickRes \ar[r,"{\TRdot \displaystyle{-} \BLdot}" {yshift=-2pt}] & h^{-2}q^3\:\thickRes \ar[r,"{\TRdot \displaystyle{-} \BRdot}" {yshift=-2pt}] & h^{-1}q\:\thickRes \ar[r,"{\zip}"] & \oriRes
		\end{tikzcd}
	\]
\end{rem}

\begin{proof}[Proof of Proposition~\ref{prop:simplifyTwists}]
	We omit the symbols $C_N(-)$ for notational simplicity as in Remark~\ref{rem:simplifiedComplexes}. 
	We prove the result by induction on $m \ge 1$. The base case $m = 1$ is true by definition (Figure~\ref{fig:edgeMapsZipUnzip}). For the inductive case, we must prove that \[
		q^{1-N}\left(\begin{tikzcd}[column sep=50pt]
			h^{-m}q^{2m-1}\:\begin{gathered}
			\includegraphics[width=.036\textwidth]{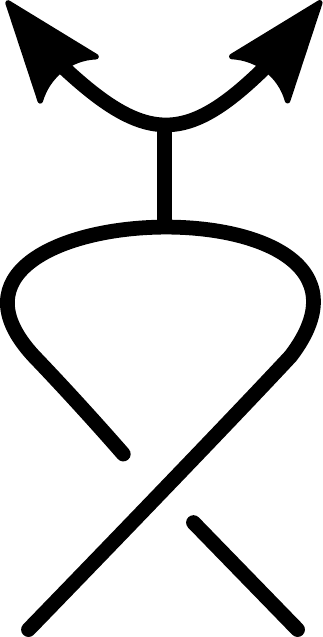}
			\vspace{-3pt}
		\end{gathered} \ar[r] & \cdots \ar[r] & h^{-1}q\:\begin{gathered}
			\includegraphics[width=.036\textwidth]{smallThickEdgeTwist.pdf}
			\vspace{-3pt}
		\end{gathered} \ar[r] & \begin{gathered}
			\includegraphics[width=.036\textwidth]{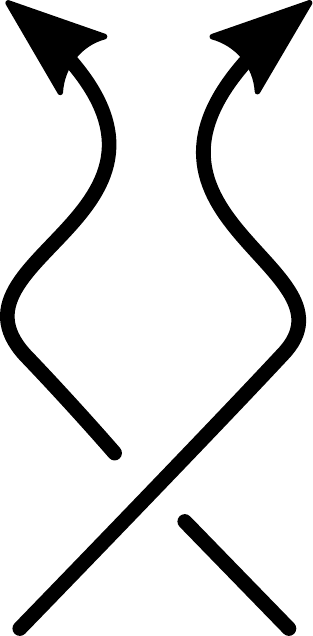}
			\vspace{-3pt}
		\end{gathered}
		\end{tikzcd} \right) \simeq \: q^{(m+1)-(m+1)N}S_{m+1}
	\]where the arrows in the complex on the lefthand side are induced by those in $S_m$. The complex expands to \[
		\begin{tikzpicture}
			\node (1t) at (0,4) {$h^{-3}q^5\:\thickRes$};
			\node (2t) at (4.5,4) {$h^{-2}q^3\:\thickRes$}; 
			\node (3t) at (9,4) {$h^{-1}q\:\thickRes$}; 
			\node (4t) at (13.5,4) {$\oriRes$}; 

			\node (1b) at (0,0) {$h^{-4}q^6\:\thickLoop$};
			\node (2b) at (4.5,0) {$h^{-3}q^4\:\thickLoop$};
			\node (3b) at (9,0) {$h^{-2}q^2\:\thickLoop$};
			\node (4b) at (13.5,0) {$h^{-1}q\:\thickRes$};

			\draw[->] (1t) to node[above]{$\TRdot - \BLdot$} (2t);
			\draw[->] (2t) to node[above]{$\TRdot - \BRdot$} (3t);
			\draw[->] (3t) to node[above]{$\zip$} (4t);

			\draw[->,neg] (1b) to node[right]{$\leftZip$} (1t);
			\draw[->] (2b) to node[right]{$\leftZip$} (2t);
			\draw[->,neg] (3b) to node[right]{$\leftZip$} (3t);
			\draw[->] (4b) to node[right]{$\zip$} (4t);

			\draw[->] (1b) to node[above]{$\loopTRdot \displaystyle{-} \loopBLdot$} (2b);
			\draw[->] (2b) to node[above]{$\loopTRdot \displaystyle{-} \loopBRdot$} (3b);
			\draw[->] (3b) to node[above]{$\rightZip$} (4b);
		\end{tikzpicture}
	\]where we have drawn the $m = 3$ case for clarity. The small circles drawn on the vertical arrows in every other column indicate negation (see the caption of Figure~\ref{fig:edgeMapsZipUnzip}). By Lemma~\ref{lem:delooping}, we have inverse isomorphisms \[
		\begin{tikzpicture}
			\node (1l) at (0,0) {$\thickLoop$};
			\node (1r1) at (5,0.6) {$\:\:\:\:q\:\thickRes$};
			\node (1rs) at (5.27,0) {$\oplus$};
			\node (1r2) at (5,-0.6) {$q^{-1}\:\thickRes$};

			\draw[->] (1l) to node[above=2pt]{$\capEx$} (1r1);
			\draw[->,neg] (1l) to node[below=4pt]{$\capEx \circ \loopBLdot$} (1r2);

			\node (2r) at (13,0) {$\thickLoop$};
			\node (2l1) at (8,0.6) {$\:\:\:\:q\:\thickRes$};
			\node (2ls) at (8.27,0) {$\oplus$};
			\node (2l2) at (8,-0.6) {$q^{-1}\:\thickRes$};

			\draw[->] (2l1) to node[above=4pt]{$\loopBRdot\circ\cupEx$} (2r);
			\draw[->] (2l2) to node[below=2pt]{$\cupEx$} (2r);
		\end{tikzpicture}
	\]Using this isomorphism, we may rewrite the complex as \[
		\begin{tikzpicture}
			\node (1t) at (0,3) {$h^{-3}q^5\:\thickRes$};
			\node (2t) at (4.5,3) {$h^{-2}q^3\:\thickRes$}; 
			\node (3t) at (9,3) {$h^{-1}q\:\thickRes$}; 
			\node (4t) at (13.5,3) {$\oriRes$}; 

			\node (1bp) at (0,0) {$\oplus$};
			\node (1b1) at (0.7,0.6) {$h^{-4}q^7\:\thickRes$};
			\node (1b2) at (-0.7,-0.6) {$h^{-4}q^5\:\thickRes$};

			\node (2bp) at (4.5,0) {$\oplus$};
			\node (2b1) at (4.5+0.7,0.6) {$h^{-3}q^5\:\thickRes$};
			\node (2b2) at (4.5-0.7,-0.6) {$h^{-3}q^3\:\thickRes$};

			\node (3bp) at (9,0) {$\oplus$};
			\node (3b1) at (9+0.7,0.6) {$h^{-2}q^3\:\thickRes$};
			\node (3b2) at (9-0.7,-0.6) {$h^{-2}q\:\thickRes$};

			\node (4b) at (13.5,0) {$h^{-1}q\:\thickRes$};

			\draw[->] (1t) to node[above]{$\TRdot - \BLdot$} (2t);
			\draw[->] (2t) to node[above]{$\TRdot - \BRdot$} (3t);
			\draw[->] (3t) to node[above]{$\zip$} (4t);

			\draw[->,dotted,neg] (1b1) to node{} (1t);
			\draw[->,dotted,neg] (1b2) to node{} (1t);

			\draw[->,dotted] (2b1) to node{} (2t);
			\draw[->,dotted] (2b2) to node{} (2t);

			\draw[->,dotted,neg] (3b1) to node{} (3t);
			\draw[->,dotted,neg] (3b2) to node{} (3t);

			\draw[->] (4b) to node[right]{$\zip$} (4t);

			\draw[->,dotted] (1b1) to node{} (2b1);
			\draw[->,dotted] (1b1) to node{} (2b2);
			\draw[->,dotted] (1b2) to node{} (2b1);
			\draw[->,dotted] (1b2) to node{} (2b2);

			\draw[->,dotted] (2b1) to node{} (3b1);
			\draw[->,dotted] (2b1) to node{} (3b2);
			\draw[->,dotted] (2b2) to node{} (3b1);
			\draw[->,dotted] (2b2) to node{} (3b2);

			\draw[->,dotted] (3b1) to node{} (4b);
			\draw[->,dotted] (3b2) to node{} (4b);
		\end{tikzpicture}
	\]where the dotted arrows are composites, which we now consider. The vertical arrows can be determined using the observation that \[
		\begin{gathered}
			\includegraphics[width=.24\textwidth]{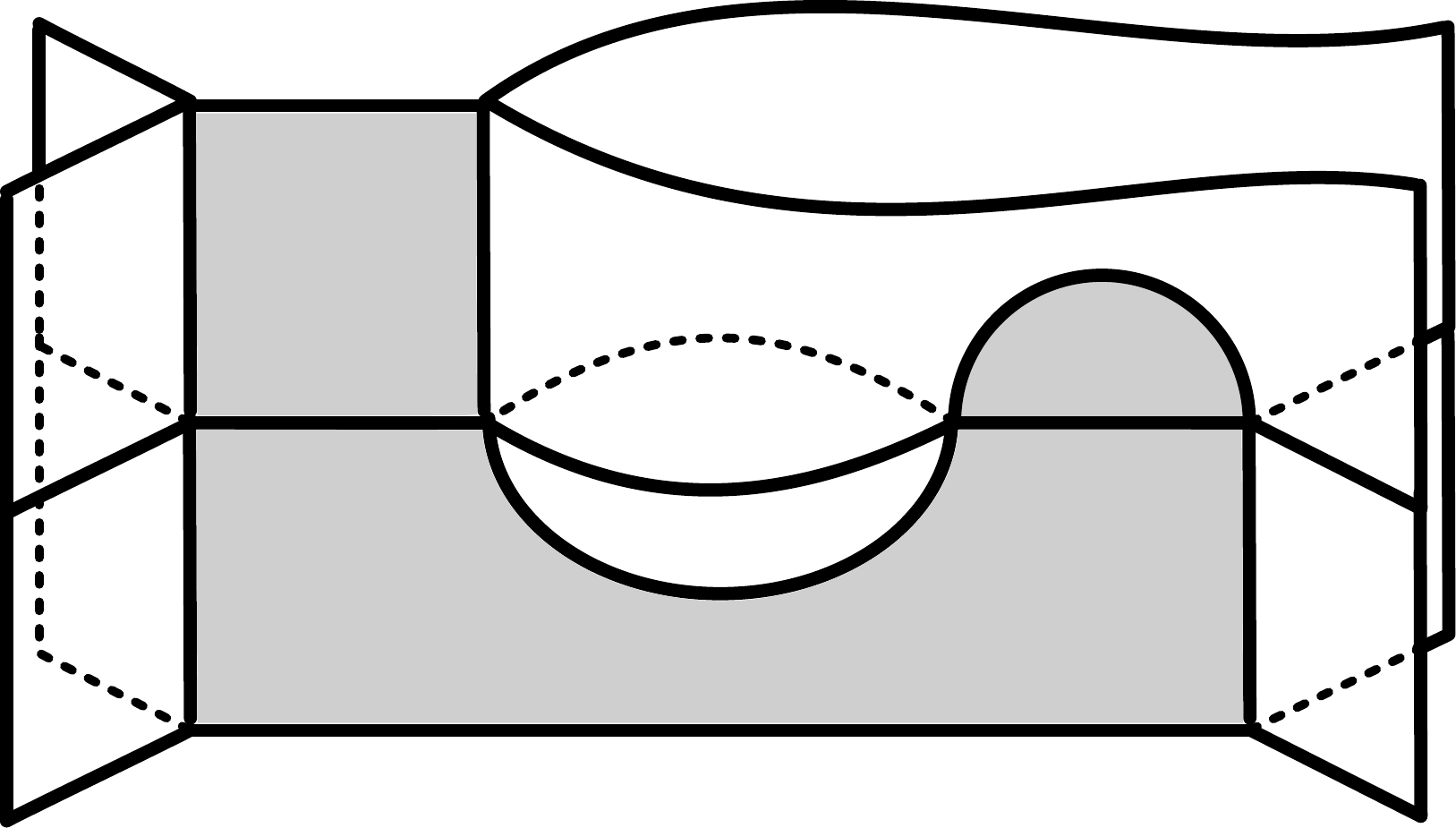}
		\end{gathered}
	\]is isotopic to the identity foam. A similar observation allows us to also determine the rightmost horizontal arrows. We therefore have \[
		\begin{tikzpicture}
			\node (1t) at (0,3) {$h^{-3}q^5\:\thickRes$};
			\node (2t) at (4.5,3) {$h^{-2}q^3\:\thickRes$}; 
			\node (3t) at (9,3) {$h^{-1}q\:\thickRes$}; 
			\node (4t) at (13.5,3) {$\oriRes$}; 

			\node (1bp) at (0,0) {$\oplus$};
			\node (1b1) at (0.7,0.6) {$h^{-4}q^7\:\thickRes$};
			\node (1b2) at (-0.7,-0.6) {$h^{-4}q^5\:\thickRes$};

			\node (2bp) at (4.5,0) {$\oplus$};
			\node (2b1) at (4.5+0.7,0.6) {$h^{-3}q^5\:\thickRes$};
			\node (2b2) at (4.5-0.7,-0.6) {$h^{-3}q^3\:\thickRes$};

			\node (3bp) at (9,0) {$\oplus$};
			\node (3b1) at (9+0.7,0.6) {$h^{-2}q^3\:\thickRes$};
			\node (3b2) at (9-0.7,-0.6) {$h^{-2}q\:\thickRes$};

			\node (4b) at (13.5,0) {$h^{-1}q\:\thickRes$};

			\draw[->] (1t) to node[above]{$\TRdot - \BLdot$} (2t);
			\draw[->] (2t) to node[above]{$\TRdot - \BRdot$} (3t);
			\draw[->] (3t) to node[above]{$\zip$} (4t);

			\draw[->,neg] (1b1) to node[right=2pt]{$\BRdot$} (1t);
			\draw[->,neg,pos=.7] (1b2) to node[left]{$\Id$} (1t);

			\draw[->] (2b1) to node[right=2pt]{$\BRdot$} (2t);
			\draw[->,pos=.7] (2b2) to node[left]{$\Id$} (2t);

			\draw[->,neg] (3b1) to node[right=2pt]{$\BRdot$} (3t);
			\draw[->,neg,pos=.7] (3b2) to node[left]{$\Id$} (3t);

			\draw[->] (4b) to node[right]{$\zip$} (4t);

			\draw[->,dotted] (1b1) to node{} (2b1);
			\draw[->,dotted] (1b1) to node{} (2b2);
			\draw[->,dotted] (1b2) to node{} (2b1);
			\draw[->,dotted] (1b2) to node{} (2b2);

			\draw[->,dotted] (2b1) to node{} (3b1);
			\draw[->,dotted] (2b1) to node{} (3b2);
			\draw[->,dotted] (2b2) to node{} (3b1);
			\draw[->,dotted] (2b2) to node{} (3b2);

			\draw[->] (3b1) to node[above]{$\TRdot$} (4b);
			\draw[->] (3b2) to node[below]{$\Id$} (4b);
		\end{tikzpicture}
	\]We may therefore simplify the complex by Gaussian elimination along the $m \:(=3)$ vertical identity maps (for Gaussian elimination, see for example \cite[Lemma 4.2]{MR2320156} or \cite[Lemma 3.1]{MR3332892}). Only half of the remaining dotted maps need to be computed explicitly to determine the result of Gaussian elimination. They are \[
		\begin{tikzpicture}
			\node (1t) at (0,3) {$h^{-3}q^5\:\thickRes$};
			\node (2t) at (4.5,3) {$h^{-2}q^3\:\thickRes$}; 
			\node (3t) at (9,3) {$h^{-1}q\:\thickRes$}; 
			\node (4t) at (13.5,3) {$\oriRes$}; 

			\node (1bp) at (0,0) {$\oplus$};
			\node (1b1) at (0.7,0.6) {$h^{-4}q^7\:\thickRes$};
			\node (1b2) at (-0.7,-0.6) {$h^{-4}q^5\:\thickRes$};

			\node (2bp) at (4.5,0) {$\oplus$};
			\node (2b1) at (4.5+0.7,0.6) {$h^{-3}q^5\:\thickRes$};
			\node (2b2) at (4.5-0.7,-0.6) {$h^{-3}q^3\:\thickRes$};

			\node (3bp) at (9,0) {$\oplus$};
			\node (3b1) at (9+0.7,0.6) {$h^{-2}q^3\:\thickRes$};
			\node (3b2) at (9-0.7,-0.6) {$h^{-2}q\:\thickRes$};

			\node (4b) at (13.5,0) {$h^{-1}q\:\thickRes$};

			\draw[->] (1t) to node[above]{$\TRdot - \BLdot$} (2t);
			\draw[->] (2t) to node[above]{$\TRdot - \BRdot$} (3t);
			\draw[->] (3t) to node[above]{$\zip$} (4t);

			\draw[->,neg] (1b1) to node[right=2pt]{$\BRdot$} (1t);
			\draw[->,neg,pos=.7] (1b2) to node[left]{$\Id$} (1t);

			\draw[->] (2b1) to node[right=2pt]{$\BRdot$} (2t);
			\draw[->,pos=.7] (2b2) to node[left]{$\Id$} (2t);

			\draw[->,neg] (3b1) to node[right=2pt]{$\BRdot$} (3t);
			\draw[->,neg,pos=.7] (3b2) to node[left]{$\Id$} (3t);

			\draw[->] (4b) to node[right]{$\zip$} (4t);

			\draw[->] (1b1) to node[above]{$\TRdot$} (2b1);
			\draw[->,dotted] (1b1) to node{} (2b2);
			\draw[->] (1b2) to node[below]{$\Id$} (2b1);
			\draw[->,dotted] (1b2) to node{} (2b2);

			\draw[->,neg] (2b1) to node[above]{$\TLdot$} (3b1);
			\draw[->,dotted] (2b1) to node{} (3b2);
			\draw[->,neg] (2b2) to node[below]{$\Id$} (3b1);
			\draw[->,dotted] (2b2) to node{} (3b2);

			\draw[->] (3b1) to node[above]{$\TRdot$} (4b);
			\draw[->] (3b2) to node[below]{$\Id$} (4b);
		\end{tikzpicture}
	\]To be explicit, we are asserting that \begin{align*}
		\Id &=\: \capEx \circ \left(\:\loopTRdot - \loopBLdot \:\right) \circ \cupEx\\
		\TRdot \:&=\:\capEx\circ\left(\:\loopTRdot - \loopBLdot \:\right)\circ \loopBRdot \circ\cupEx\\
		-\Id &=\:\capEx\circ\left(\:\loopTRdot - \loopBRdot \:\right) \circ\cupEx\\
		-\TLdot \:&= \:\capEx\circ\left(\:\loopTRdot - \loopBRdot \:\right)\circ \loopBRdot \circ \cupEx
	\end{align*}The first three computations follow from Lemma~\ref{lem:delooping}, while the last computation uses the identity \[
		\TLdot + \TRdot = \BLdot + \BRdot
	\]which follows from Lemma~\ref{lem:dotmigration}. Finally, we perform the stated Gaussian elimination to obtain\[
		\begin{tikzpicture} 
			\node (4t) at (13.5,3) {$\oriRes$}; 

			\node (1b1) at (0,0) {$h^{-4}q^7\:\thickRes$};

			\node (2b1) at (4.5,0) {$h^{-3}q^5\:\thickRes$};

			\node (3b1) at (9,0) {$h^{-2}q^3\:\thickRes$};

			\node (4b) at (13.5,0) {$h^{-1}q\:\thickRes$};

			\draw[->] (4b) to node[right]{$\zip$} (4t);

			\draw[->] (1b1) to node[above]{$\TRdot-\BRdot$} (2b1);

			\draw[->] (2b1) to node[above]{$\TRdot - \BLdot$} (3b1);

			\draw[->] (3b1) to node[above]{$\TRdot-\BRdot$} (4b);
		\end{tikzpicture}
	\]which is precisely $q^{(m+1)-(m+1)N}S_{m+1}$ as required.
\end{proof}

\begin{proof}[Proof of Observation~\ref{obs:mainObs}]
	Let $O$ denote the crossingless diagram of the unknot, and note that when $m = 0$, the torus link $T(2,0)$ is just the two-component unlink $O \sqcup O$. Since $\sr R_N(O \sqcup O) = \CP^{N-1} \x \CP^{N-1}$, the $m = 0$ case of the observation follows from $\KR_N(O) \cong q^{1-N}H^*(\CP^{N-1})$ using $\KR_N(O \sqcup O) \cong \KR_N(O) \otimes \KR_N(O)$ and $H^*(\CP^{N-1} \x \CP^{N-1}) \cong H^*(\CP^{N-1}) \otimes H^*(\CP^{N-1})$. We may assume that $m \ge 1$, since $\sr R_N(T(2,-m))$ and $\sr R_N(T(2,m))$ are the same, and $\KR_N(T(2,-m))$ and $\KR_N(T(2,m))$ are related by the universal coefficient theorem, which can be shown using Remark~\ref{rem:transposes}. 

	Proposition~\ref{prop:simplifyTwists} gives a homotopy equivalence between $C_N(T(2,m))$ and a much simpler complex. For the first few values of $m$, for example, we form the braid closures of the local diagrams in Remark~\ref{rem:simplifiedComplexes} to obtain \[
		\begin{tikzcd}[column sep=50pt,row sep=tiny]
			q^{1-1N}C_N T(2,1) \simeq &[-55pt] &[-40pt] &[30pt] &[-40pt] q^{1-N}C_N T(2,1)\\
			q^{2-2N}C_N T(2,2) \simeq &[-55pt] & & h^{-2}q^3C_N\smallTheta & q^{1-N}C_N T(2,1)\\
			q^{3-3N}C_N T(2,3) \simeq & & h^{-3}q^5C_N\smallTheta \ar[r,"{C_N\smallThetaR\, - C_N\smallThetaL}"] & h^{-2}q^3C_N\smallTheta & q^{1-N}C_N T(2,1)\\
			q^{4-4N}C_N T(2,4) \simeq & h^{-4}q^7C_N\smallTheta & h^{-3}q^5C_N\smallTheta \ar[r,"{C_N\smallThetaR\, - C_N\smallThetaL}"] & h^{-2}q^3C_N\smallTheta & q^{1-N} C_N T(2,1)
		\end{tikzcd}
	\]Note that the maps from even homological degree to odd homological degree are all zero. Furthermore, since $T(2,1)$ represents the unknot, $C_N(T(2,1))$ is homotopy equivalent to $C_N(O)$. Thus, if we set \[
		A \coloneqq \begin{tikzcd}[column sep=80pt]
			h^{-1}q C_N\smallTheta \ar[r,"{C_N\smallThetaR\, - C_N\smallThetaL}"] & q^{-1}C_N\smallTheta
		\end{tikzcd}
	\]then \[
		 q^{m-mN}C_N(T(2,m)) \simeq \begin{cases}
		 	\left(\bigoplus_{i=1}^{k}h^{-2i}q^{4i} A \right) \oplus \left(q^{1-N} C_N(O)\right)  & m\text{ is odd and }m = 2k + 1\\
		 	\left(h^{-2k}q^{4k-1}C_N(\smallTheta)\right) \oplus\left(\bigoplus_{i=1}^{k-1} h^{-2i}q^{4i}A \right) \oplus \left(q^{1-N} C_N(O)\right)& m\text{ is even and }m=2k.
		 \end{cases}
	\]Proposition~\ref{prop:representationSpaceCalculation} gives an explicit calculation of $\sr R_N(T(2,m))$: it consists of a single copy of $\CP^{N-1}$, a single copy of $\F(1,1;N)$ when $m$ is even, and $\lfloor (m-1)/2 \rfloor$ copies of $UT\CP^{N-1}$. We recall from Examples~\ref{example:stateSpaceUnknot} and \ref{example:stateSpaceThetaGraph} that $C_N(O) = q^{1-N}H^{*}(\CP^{N-1})$ and $C_N(\smallTheta) = q^{3-2N} H^*(\F(1,1;N))$. Thus, in order to show that $\KR_N(T(2,m))$ and $H^*(\sr R_N(T(2,m)))$ are isomorphic as abelian groups, it suffices to show that the homology of the complex $A$ is isomorphic to $H^*(UT\CP^{N-1})$. 

	By Proposition~\ref{prop:eulerClass}, we know that $UT\CP^{N-1}$ is a circle bundle over $\F(1,1;N)$ with Euler class $c_1(\sr A) - c_1(\sr B)$, where $\sr A$ and $\sr B$ are the two tautological line bundles over $\F(1,1;N)$. Since the cohomology of $\F(1,1;N)$ is supported in even degrees, the Gysin sequence for this bundle is the exact sequence \[
		\begin{tikzcd}
			0 \ar[r] &[-10pt] H^{2i+1}(UT\CP^{N-1}) \ar[r] & H^{2i}(\F(1,1;N)) \ar[r,"\cup\,e"] & H^{2i+2}(\F(1,1;N)) \ar[r] & H^{2i+2}(UT\CP^{N-1}) \ar[r] &[-10pt] 0
		\end{tikzcd}
	\]Under the identification $C_N(\smallTheta) = q^{3-2N}H^*(\F(1,1;N))$ given in Example~\ref{example:stateSpaceThetaGraph}, the complex $A$ is precisely \[
		\begin{tikzcd}
			h^{-1}q^{4-2N}H^*(\F(1,1;N)) \ar[r,"\cup\,e"] & q^{2-2N}H^*(\F(1,1;N))
		\end{tikzcd}
	\]from which it immediately follows that the homology of $A$ is isomorphic to the cohomology of $UT\CP^{N-1}$, without needing to actually compute either group. 

	Notice that we may actually describe the homology of $A$ as a bigraded group in terms of the cohomology of $UT\CP^{N-1}$, which thereby gives the complete bigrading information on $\KR_N(T(2,m))$. The homology of $A$ is supported in homological degrees $-1$ and $0$, and in even $q$-gradings. In homological degree $0$, the $q$-graded homology of $A$ is simply the direct sum of the even-degree cohomology groups of $UT\CP^{N-1}$, shifted in grading by $2-2N$. In homological degree $-1$, the $q$-graded homology of $A$ is the direct sum of the odd-degree cohomology groups of $UT\CP^{N-1}$, shifted in grading by $3-2N$. 

	We can explicitly determine the cohomology of $UT\CP^{N-1}$ using the Gysin sequence for the $(2N-3)$-sphere bundle $UT\CP^{N-1} \to \CP^{N-1}$ whose Euler class is $N = \chi(\CP^{N-1})$ times a generator of $H^{2N-2}(\CP^{N-1})$. We find that \[
		H^i(UT\CP^{N-1}) = \begin{cases}
			\Z & i \text{ is even and } 0 \leq i \leq 2N-4\\
			\Z/N & i = 2N-2\\
			\Z & i\text{ is odd and } 2N - 1 \leq i \leq 4N-5\\
			0 & \text{else.} 
		\end{cases}
	\]Thus, in homological degree $0$, the homology of $A$ consists of a copy of $\Z$ in even $q$-gradings $i$ satisfying $2-2N \leq i \leq -2$ and a copy of $\Z/N$ in $q$-grading $0$. In homological degree $-1$, the homology of $A$ consists of a copy of $\Z$ in even $q$-gradings $i$ satisfying $2 \leq i \leq 2N-2$.
\end{proof}

\raggedright
\bibliography{gysin}
\bibliographystyle{alpha}

\vspace{10pt}

\textit{Department of Mathematics}

\textit{Harvard University}

\textit{Science Center, 1 Oxford Street}

\textit{Cambridge, MA 02138}

\textit{USA}

\vspace{10pt}

\textit{Email:} \texttt{jxwang@math.harvard.edu}

\end{document}